\newtheorem{theorem}{Theorem}[section]
\newtheorem{lemma}[theorem]{Lemma}
\newtheorem{proposition}[theorem]{Proposition}
\newtheorem{corollary}[theorem]{Corollary}
\theoremstyle{definition}
\theoremstyle{remark}
\newtheorem{remark}[theorem]{Remark}
\numberwithin{equation}{section}
\newcommand{\isomorphic}{\simeq}
\newcommand{\liefont}[1]{\mathfrak{#1}}
\newcommand{\s}[3]{s^{#1}_{#2#3}}
\begin{document}

\title{Generic Representation Theory of the Unipotent Upper Triangular Groups}

\author{Michael Crumley}
\address{Department of Mathematics, The University of Toledo,
Toledo, Ohio 43606}
\email{mikecrumley@hotmail.com}

\subjclass[2010]{Primary 20G05, 20G15}

\date{October 2010.}


\keywords{Generic Representation Theory, Unipotent Algebraic
Groups, Additive Group, Heisenberg Group}

\begin{abstract}
It is generally believed (and for the most part is probably true)
that Lie theory, in contrast to the characteristic zero case, is
insufficient to tackle the representation theory of algebraic
groups over prime characteristic fields.  However, in this paper
we show that, for a large and important class of unipotent
algebraic groups (namely the unipotent upper triangular groups
$U_n$), and under a certain hypothesis relating the characteristic
$p$ to both $n$ and the dimension $d$ of a representation
(specifically, $p \geq \text{max}(n,2d)$, Lie theory is completely
sufficient to determine the representation theories of these
groups. To finish, we mention some important analogies (both
functorial and cohomological) between the characteristic zero
theories of these groups and their `generic' representation theory
in characteristic $p$.

\end{abstract}

\maketitle


\section{Introduction}

In this paper we extend a result for representations of the
Additive group $G_a$ given in \cite{SFB} and the Heisenberg group
$U_3$ given in \cite{myPaper}.  Namely, we give an intimate
connection between the characteristic zero representation theory
of the unipotent upper triangular groups $U_n$, and their
characteristic $p >0$ theory when $p$ greater than or equal to
both $n$ and twice the dimension of a representation.  In
particular, such representations are always given rise to by a
commuting product of Lie algebra representations, one Lie algebra
representation for each of its Frobenius layers, and conversely
and such collection of Lie algebra representations induces a
representation of $U_n$ in this fashion.

The essence of what we will be proving in the present paper is
best illustrated by the obvious analogy between the following two
theorems.  The first is very well known, the second not quite so
much.

\begin{theorem}Let $k$ be a field of characteristic zero.  Every representation of the Heisenberg group (i.e.~$U_3$) over $k$
is of the form
\[ e^{xX+yY+(z-xy/2)Z}\]
where $X,Y$ and $Z$ are nilpotent matrices over $k$ satisfying $Z
= [X,Y]$ and $[Z,X] = [Z,Y] = 0$.  Further, any such collection
$X,Y,Z$ gives a representation of $U_3$ over $k$ according to this
formula.
\end{theorem}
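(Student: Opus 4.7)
The plan is to invoke the standard characteristic-zero equivalence between a simply connected unipotent algebraic group and its Lie algebra, but to keep everything explicit at the level of matrices so that the peculiar-looking coordinate $z - xy/2$ emerges naturally.

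First I would fix coordinates: identify $U_3$ with the group of $3 \times 3$ upper unitriangular matrices, writing $(x,y,z)$ for the matrix with $x, y$ on the superdiagonal and $z$ in the upper right corner. The Lie algebra $\mathfrak{u}_3$ has basis $X_0 = E_{12}$, $Y_0 = E_{23}$, $Z_0 = E_{13}$ with $[X_0,Y_0] = Z_0$ and $Z_0$ central. Since any element $N = xX_0 + yY_0 + (z-xy/2)Z_0$ satisfies $N^3 = 0$, one computes directly
\[ \exp\bigl(xX_0 + yY_0 + (z-xy/2)Z_0\bigr) = I + N + \tfrac{1}{2}N^2, \]
and the result is precisely the matrix $(x,y,z)$. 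So $-xy/2$ is just the correction passing from matrix-entry coordinates to Lie-algebra (exponential) coordinates.

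For the forward direction, given a rational representation $\rho\colon U_3 \to GL(V)$, let $d\rho\colon \mathfrak{u}_3 \to \mathfrak{gl}(V)$ be its differential, and put $X = d\rho(X_0)$, $Y = d\rho(Y_0)$, $Z = d\rho(Z_0)$. Since $U_3$ is unipotent, its image under $\rho$ is unipotent, and the one-parameter subgroups generated by $X_0$, $Y_0$, $Z_0$ map to one-parameter subgroups by nilpotent matrices; thus $X, Y, Z$ are nilpotent, and they inherit the bracket relations $Z = [X,Y]$, $[Z,X] = [Z,Y] = 0$. Over a characteristic zero field the exponential identifies a simply connected unipotent group with its Lie algebra, and $\rho(\exp u) = \exp(d\rho(u))$; combined with the computation of the previous paragraph this yields the stated formula.

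For the converse, given such $X, Y, Z$, define $\rho(x,y,z) = \exp\bigl(xX + yY + (z-xy/2)Z\bigr)$. The map is algebraic because the argument is nilpotent (it lies in the Lie subalgebra generated by $X, Y, Z$, all of whose elements are nilpotent by the centrality of $Z$), so $\exp$ is a polynomial. To check it is a homomorphism, apply Baker--Campbell--Hausdorff: because $Z$ is central, BCH truncates after the first commutator, giving $\exp(A)\exp(B) = \exp(A + B + \tfrac{1}{2}[A,B])$ whenever $A, B \in \operatorname{span}(X,Y,Z)$. A short direct calculation then matches $\rho(x_1,y_1,z_1)\rho(x_2,y_2,z_2)$ with $\rho$ of the Heisenberg product $(x_1+x_2, y_1+y_2, z_1+z_2 + x_1 y_2)$.

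The only slightly delicate point is the last BCH bookkeeping (and being consistent about which coordinate convention one uses on $U_3$); everything else is either a one-line computation or the standard characteristic-zero integration of Lie algebra representations by nilpotent matrices into representations of the simply connected unipotent group.
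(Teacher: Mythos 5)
Your argument is correct. Note, though, that the paper never proves this statement: Theorem 1.1 is quoted in the introduction as ``very well known'' background, so there is no in-paper proof to compare against. Your route --- exponential coordinates, the computation $\exp(xX_0+yY_0+(z-xy/2)Z_0)=I+N+\tfrac12N^2$ explaining the $-xy/2$ correction, differentiation for necessity, and truncated Baker--Campbell--Hausdorff for sufficiency --- is the standard one, and it is exactly the argument the paper later adapts to characteristic $p$ in Section 5 (Proposition \ref{BCHcaresProp} and Theorem \ref{theCheckListTheorem} are the char-$p$ versions of your conditions (1)--(4)). The only point you gloss over is why \emph{every} element of $\operatorname{span}(X,Y,Z)$ is nilpotent (not just the three generators); ``centrality of $Z$'' alone doesn't give this, but in characteristic zero Lie's theorem lets you simultaneously upper-triangularize the solvable algebra $\operatorname{span}(X,Y,Z)$, and nilpotency of the generators then forces zero diagonals, so the whole span is strictly upper triangular in that basis. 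With that one sentence added, the proof is complete.
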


\begin{theorem} (see theorem 1.3 of \cite{myPaper})
\label{heisenbergGroupTheorem} Let $k$ be a field of
characteristic $p$, and suppose $p \geq 2d$. Then every
$d$-dimensional representation of the Heisenberg group over $k$ is
of the form
\begin{equation*}
\begin{split}
 &e^{xX_0+yY_0 + (z-xy/2)Z_0} e^{x^p X_1 + y^p Y_1 + (z^p-x^p
y^p/2)Z_1}\\ &\ldots e^{x^{p^m} X_m + y^{p^m} Y_m + (z^{p^m} -
x^{p^m} y^{p^m}/2)Z_m}
\end{split}
\end{equation*}
where $X_0,Y_0,Z_0,X_1,Y_1,Z_1\ldots,X_m, Y_m,Z_m$ is a collection
of $d \times d$ nilpotent matrices over $k$ satisfying
\begin{enumerate}
\item{$[X_i,Y_i] = Z_i$ and $[Z_i,X_i] = [Z_i,Y_i] = 0$ for every
$i$} \item{whenever $i \neq j$, $X_i,Y_i$ and $Z_i$ commute with
all of $X_j,Y_j$ and $Z_j$}
\end{enumerate}
Further, any such collection of $d \times d$ matrices gives a
representation of the Heisenberg group over $k$ according to the
above formula.
\end{theorem}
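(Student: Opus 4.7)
The plan is to reduce to the already-known one-variable case (the $G_a$ theorem of \cite{SFB}) and then extract the Heisenberg and cross-layer relations from the group law of $U_3$. I would first restrict $\rho$ to each of the three coordinate one-parameter subgroups $\{(x,0,0)\}$, $\{(0,y,0)\}$, $\{(0,0,z)\}$ of $U_3$, each isomorphic to $G_a$. Under the hypothesis $p \geq 2d$, \cite{SFB} produces nilpotent matrices $X_0,\dots,X_m$, $Y_0,\dots,Y_m$, $Z_0,\dots,Z_m$ with
\[
\rho(x,0,0) = \prod_i e^{x^{p^i} X_i}, \qquad \rho(0,y,0) = \prod_i e^{y^{p^i} Y_i}, \qquad \rho(0,0,z) = \prod_i e^{z^{p^i} Z_i},
\]
and with the intra-family commutations $[X_i,X_j] = [Y_i,Y_j] = [Z_i,Z_j] = 0$ automatic.

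Next I would exploit the two nontrivial features of the group law of $U_3$. Centrality of the $z$-axis gives $\rho(x,0,0)\rho(0,0,z) = \rho(0,0,z)\rho(x,0,0)$; expanding via the Hausdorff series (which truncates because $p \geq 2d$ forces the exponentials to be polynomials of bounded degree) and matching the coefficients of $x^{p^i} z^{p^j}$ forces $[X_i,Z_j] = 0$, and similarly $[Y_i,Z_j] = 0$. For the Heisenberg relation and the remaining cross-layer commutations, I would compute the commutator
\[
\rho(x,0,0)\,\rho(0,y,0)\,\rho(x,0,0)^{-1}\,\rho(0,y,0)^{-1} = \rho(0,0,xy) = \prod_i e^{x^{p^i} y^{p^i} Z_i};
\]
expanding the left side and matching the coefficient of each monomial $x^{p^i} y^{p^j}$ would pin down $[X_i,Y_i] = Z_i$ and $[X_i,Y_j] = 0$ for $i \neq j$.

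With all the relations in hand, the product formula is assembled as follows. Writing $(x,y,z) = (x,0,0)(0,y,0)(0,0,z-xy)$ and applying $\rho$, the cross-layer commutations allow the three one-parameter products to be regrouped layer-by-layer, and within each layer the classical Heisenberg identity $e^{A} e^{B} = e^{A + B + \frac{1}{2}[A,B]}$ (valid whenever $[A,B]$ is central) combined with the Frobenius relation $(z-xy)^{p^i} = z^{p^i} - x^{p^i} y^{p^i}$ yields exactly the stated factor $e^{x^{p^i} X_i + y^{p^i} Y_i + (z^{p^i} - x^{p^i} y^{p^i}/2) Z_i}$. The converse direction reduces to verifying that a single such factor defines a homomorphism (this is essentially the classical Heisenberg formula, which remains valid here because nilpotency makes the exponential polynomial), together with the observation that the commuting layers yield a product of pairwise commuting homomorphisms.

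The main obstacle I anticipate is the second step: controlling the Hausdorff expansion of the commutator tightly enough to isolate each Frobenius layer. This is precisely where the hypothesis $p \geq 2d$ earns its keep, since it simultaneously bounds the nilpotency index so the exponentials truncate to polynomials, and keeps the Frobenius monomials $x^{p^i} y^{p^j}$ linearly independent in the relevant polynomial ring, allowing coefficient-matching to extract all the commutation relations cleanly.
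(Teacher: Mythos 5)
Your argument is correct, and it reaches the conclusion by a genuinely different route from the one taken here (and in the cited source for this theorem). The paper works at the level of the comodule axiom: it expands the matrix formula as $\sum_M \chi(M)x^M$, turns $\Delta(a_{ij})=\sum_k a_{ik}\otimes a_{kj}$ into a single combinatorial identity, and extracts each product $\chi(p^l\varepsilon_{rs})\chi(p^m\varepsilon_{tu})$ by locating the coefficient of the monomial tensor $x^M\otimes x^N$; the commutators then collapse via the ``carrying lemma.'' You instead restrict to the three one-parameter subgroups, import the $G_a$ structure theorem wholesale, and recover all cross relations from the two defining group-law identities of $U_3$ (centrality of the $z$-axis and $[(x,0,0),(0,y,0)]=(0,0,xy)$), interpreted as identities in $k[x,y,z]\otimes\mathfrak{gl}_d$. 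The engine is the same in both treatments --- $p\geq 2d$ bounds every nilpotency index below $p/2$, so the exponents $\sum_a k_a p^a$ that can occur never carry in base $p$, and the coefficient of $x^{p^i}y^{p^j}$ isolates exactly one term --- but your packaging is cleaner for $U_3$ specifically, while the paper's is built to scale to all the root-subgroup pairs of $U_n$ at once. Two small refinements: for the relation-extraction steps you do not need the Hausdorff series at all (direct polynomial expansion of the products of exponentials plus the no-carrying argument is both sufficient and safer in characteristic $p$; BCH is only genuinely needed when you reassemble each layer into a single exponential), and the converse direction should record that the substitution $(x,y,z)\mapsto(x^{p^i},y^{p^i},z^{p^i})$ is an endomorphism of the group scheme (the content of lemma \ref{pDoesItsThingLemma}), so that the twisted layers are themselves homomorphisms before you invoke the commuting-product argument.
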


The morale: for fixed $d$ and large enough $p$, the
$d$-dimensional representation theory of $U_3$ over a field of
characteristic $p$ is in perfect analogy with the representation
theory of $U_3^\infty$ over a field of characteristic zero
(countable infinite product of copies of $U_3$).  Further, this
analogy is functorial; the characterization of morphisms between
modules over $U_3$ in characteristic zero carries over to a
characterization of morphisms for $U_3$ over a field of
characteristic $p$, assuming that both modules are of dimension $d
\geq p/2$.  Finally, this analogy is cohomological, and leads to
some interesting generic cohomology results, in a sense to be
discussed later.

 Let $k$ be a field, and let $U_n$ denote the
space of all $n \times n$ upper triangular unipotent matrices over
$k$, i.e.~those of the form
\[
\left(%
\begin{array}{ccccc}
  1 & x_{12} & x_{13} & \ldots & x_{1n} \\
   & 1 & x_{23} & \ldots & x_{2n} \\
   &  & \ddots & \ddots & \vdots \\
   &  &  & 1 & x_{n-1,n} \\
   &  &  &  & 1 \\
\end{array}%
\right)
\]
Let $\liefont{u}_n$ denote the Lie algebra of $U_n$, which we
identify as the space of all $n \times n$ strictly upper
triangular matrices over $k$, and likewise $\liefont{gl}_d$ the
Lie algebra of $GL_d$.  In what follows, if $M(x_1, \ldots, x_n)$
is a matrix whose entries are polynomials in the commuting
variables $x_1, \ldots, x_n$, then $M(x_1, \ldots, x_n)^{[m]}
\stackrel{\text{def}}{=} M(x_1^m, \ldots, x_n^m)$.  For example,
\[
\left(%
\begin{array}{cc}
  1 & 2x + 3y \\
  0 & 1 \\
\end{array}%
\right)^{[3]} = \left(
\begin{array}{cc}
  1 & 2x^3 + 3y^3 \\
  0 & 1 \\
\end{array}
\right)
\]
Note that, if $M(x_1, \ldots, x_n)$ is a representation of $U_n$
with corresponding comodule structure $V
\stackrel{\rho}{\rightarrow} V \otimes A$, then $M(x_1, \ldots,
x_n)^{[p]}$ has comodule structure given by the composition $V
\stackrel{\rho}{\rightarrow} V \otimes A \stackrel{1 \otimes
[p]}{\rightarrow} V \otimes A$, where $[p]$ denotes the linear map
$A \rightarrow A$ which sends each monomial of $A$ to its $p^{th}$
power (we argue in lemma \ref{pDoesItsThingLemma} that this new
map does indeed give a representation of $U_n$).  Note that $[p]$
 is not quite the same as a Frobenius twist; we are raising the
powers of the variables only, not the scalars. We can now state
the main theorem of this paper.

\begin{theorem}
\label{TheMainTheorem} Let $k$ be a field of characteristic $p>0$,
and suppose that $p \geq \text{max}(n,2d)$.

\begin{enumerate}

\begin{item}
let $\phi_0,\phi_1,\ldots,\phi_m:\liefont{u}_n \rightarrow
\liefont{gl}_d$ be a collection of Lie algebra homomorphisms such
that

\begin{enumerate}

\item{$\phi_i(X)$ is a nilpotent matrix for all $X \in
\liefont{u}_n$ and $0 \leq i \leq m$}

\item{For all $i \neq j$ and $X,Y \in \liefont{u}_n$, $\phi_i(X)$
commutes with $\phi_j(Y)$}

\end{enumerate}

Then the formula
\[ \Phi(g) = e^{\phi(\log(g))} {e^{\phi_1(\log(g))}}^{[p]} \ldots
{e^{\phi_m(\log(g))}}^{[p^m]} \] defines a valid $d$-dimensional
representation of $U_n$.

\end{item}

\begin{item}

Any valid $d$-dimensional representation of $U_n$ over $k$ is of
the form given by part (1).

\end{item}

\end{enumerate}

\end{theorem}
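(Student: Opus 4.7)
The plan splits naturally along the two directions. Part (1) is essentially a verification, while part (2) is where the substantive work lies: decomposing an arbitrary representation into Frobenius layers and extracting Lie algebra data from each.

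For part (1), the first step is to show that for each fixed $i$, the map $g \mapsto e^{\phi_i(\log g)}$ is a group homomorphism from $U_n$ into $GL_d$. Since $p \geq n$, every $X \in \liefont{u}_n$ is nilpotent of index at most $n \leq p$, so $\log : U_n \to \liefont{u}_n$ truncates to a polynomial map. Since the image of $\phi_i$ consists of nilpotent matrices in $\liefont{gl}_d$ and $p \geq 2d$, both $\exp$ and the Baker--Campbell--Hausdorff series truncate well within the range where every denominator appearing is invertible mod $p$. The identity $e^{\phi_i(\log(gh))} = e^{\phi_i(\log g)} \cdot e^{\phi_i(\log h)}$ then follows from $\phi_i$ preserving brackets together with BCH in this truncated regime. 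The $[p^i]$-twist of each factor remains a group homomorphism by lemma \ref{pDoesItsThingLemma}, and the commutativity hypothesis on the $\phi_i$'s guarantees that the twisted factors pairwise commute, so the product of all of them is again a homomorphism.

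For part (2), I would take a representation $\Phi$, viewed as a matrix $M = M(x_{ij})$ whose entries lie in $k[U_n]$, and seek a decomposition
\[ M = M_0 \cdot M_1^{[p]} \cdot M_2^{[p^2]} \cdots M_m^{[p^m]} \]
in which each $M_i$ is a matrix of polynomials whose monomials have all exponents strictly less than $p$, and in which the twisted factors pairwise commute. The hypothesis $p \geq n$ guarantees that the comultiplication of $k[U_n]$ respects the base-$p$ digit stratification of exponents in a controllable way, and the hypothesis $p \geq 2d$ bounds the polynomial degrees in $M$ tightly enough that each $M_i^{[p^i]}$ is itself a representation of $U_n$. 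With the factorization in hand, applying $\log$ to $M_i$ yields a matrix of polynomials in the $x_{ij}$; its linear part defines the Lie algebra homomorphism $\phi_i : \liefont{u}_n \to \liefont{gl}_d$. Nilpotency of each $\phi_i(X)$ is inherited from nilpotency of $M_i - I$, and the cross-layer commutativity condition (2) is forced by expanding $\Phi(gh) = \Phi(g)\Phi(h)$ and comparing Frobenius layers of the two sides.

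The main obstacle is establishing the Frobenius decomposition itself: showing that $M$ really does factor as a commuting product of $M_i^{[p^i]}$'s, and that each factor is individually a representation. I expect this to proceed by induction on the Frobenius depth $m$, peeling off the lowest layer $M_0 = e^{\phi_0(\log g)}$ at each stage by taking a suitable logarithm, replacing $\Phi$ by $M_0^{-1} \Phi$, and using the bound $p \geq 2d$ to ensure that the remaining factor has all its monomial exponents divisible by $p$, so that it descends through the Frobenius twist to a smaller representation. The commutativity of the various $\phi_i$'s should drop out of this inductive step as a byproduct, since at each stage the factor being peeled off lives in a polynomial-degree range disjoint from the remainder.
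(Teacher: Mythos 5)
Your treatment of part (1) is essentially the paper's: truncated $\log$ and BCH via $p\geq n$ and nilpotency of the $\phi_i(X)$, the $[p^i]$-twist preserved by lemma \ref{pDoesItsThingLemma}, and a commuting product of representations being a representation. That half is fine.

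For part (2) there is a genuine gap. Your plan rests on the assertion that $M$ factors as a commuting product $M_0 M_1^{[p]}\cdots M_m^{[p^m]}$ and that the linear part of $\log M_i$ is a Lie algebra homomorphism, with both facts "forced by expanding $\Phi(gh)=\Phi(g)\Phi(h)$ and comparing Frobenius layers." But that comparison \emph{is} the theorem: the entire difficulty is showing that the coefficient matrices $\chi(p^l\varepsilon_{rs})$ of the monomials $x_{rs}^{p^l}$ satisfy $[\chi(p^l\varepsilon_{rs}),\chi(p^l\varepsilon_{tu})]=\chi(p^l[\varepsilon_{rs},\varepsilon_{tu}])$ within a layer and commute across layers (theorem \ref{TheMainTheoremSub}). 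The paper extracts this from the comodule identity $\Delta(a_{ij})=\sum_k a_{ik}\otimes a_{kj}$, rewritten as equation \ref{TheMainEquation}, and the proof requires identifying the exact coefficient of specific monomial tensors $x^M\otimes x^N$ on the right-hand side --- a delicate counting of solutions to systems in the variables $s_{ij}^k$ (lemmas \ref{LandRlemmaOld} and \ref{BigFatLemma}), an induction on $n$ through the $U_{n-1}$ and $G_a^{n-1}$ subgroup embeddings, and the factorization lemma \ref{UnDecompositionLemma}. None of this is present or sketched in your proposal, and it does not "drop out as a byproduct" of degree bookkeeping: the product $\chi(p^l\varepsilon_{1u})\chi(p^m\varepsilon_{un})$ genuinely produces a sum $\sum_k \chi((p^m-k)\varepsilon_{un})\chi((p^l-k)\varepsilon_{1u})\chi(k\varepsilon_{1n})$ of cross terms whose exponents are \emph{not} segregated by Frobenius layer.

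Relatedly, you have misplaced where the hypothesis $p\geq 2d$ does its work. It is not a degree bound on the entries of $M$ (the comultiplication of $A_n$ has no dependence on $p$ or $d$), and $p\geq n$ has nothing to do with "base-$p$ digit stratification" of $\Delta$. The bound $p\geq 2d$ enters exactly once, through lemma \ref{carryingLemma}: since each $\chi(p^t\varepsilon_{ij})$ is nilpotent of order at most $d\leq p/2$, any term $\chi(r\varepsilon_{ij})\chi(s\varepsilon_{uv})$ for which the base-$p$ sum $r+s$ carries must vanish. This is what annihilates all but one term of the cross-term sum above, yielding $[\chi(p^m\varepsilon_{1u}),\chi(p^m\varepsilon_{un})]=\chi(p^m\varepsilon_{1n})$ and the vanishing of the brackets between distinct layers. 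Without an argument of this kind, your inductive "peel off $M_0$ and show the remainder has exponents divisible by $p$" step cannot be carried out, because you have no control over the non-linear monomials produced when you multiply by $M_0^{-1}$.
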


We ask the reader to note that this only a theorem about $p
>> n \text{ and } \text{dimension}$; for a fixed prime $p$ and large enough
dimension, the analogy between characteristic zero and
characteristic $p$ completely breaks down.  We direct the
interested reader to section 4 of \cite{myPaper} for an example of
a 10 dimensional representation of $U_3$ over the field
$\mathbb{Z}_2$ which is not of the above given form.

We shall prove the second part of our main theorem first, which
shall amount to checking that certain matrices associated to a
given representation satisfy certain relations.  We shall need
some terminology. Denote by $A_n$ the representing Hopf algebra of
$U_n$, which we identify as
\[ A_n = k[x_{ij}: 1 \leq i < j \leq n] \]
\[ \Delta: x_{ij} \mapsto 1 \otimes x_{ij} + \sum_{k=i+1}^{j-1}
x_{ik} \otimes x_{kj} + x_{ij} \otimes 1 \]
\[ \varepsilon:x_{ij} \mapsto 0 \]
We view a representation of $U_n$ on a $k$-vector space $V$ as a
comodule over its representing Hopf algebra $A_n$, (see section
3.2 of \cite{waterhouse} or chapter 2 of \cite{hopfalgebras}),
i.e.~as a $k$-linear map $\rho:V \rightarrow V \otimes A_n$
satisfying the diagrams
\begin{equation}
\label{comodDiagram1}
\begin{diagram}
V & \rTo^\rho & V \otimes A_n \\
\dTo^\rho & & \dTo_{1 \otimes \Delta} \\
V \otimes A_n & \rTo_{\rho \otimes 1} & V \otimes A_n \otimes A_n \\
\end{diagram}
\end{equation}
\begin{equation}
\label{comodDiagram2}
\begin{diagram}
V & \rTo^\rho & V \otimes A_n \\
& \rdTo_{\isomorphic} & \dTo_{1 \otimes \varepsilon} \\
& & V \otimes k \\
\end{diagram}
\end{equation}
If we fix a basis $\{e_1, \ldots, e_m\}$ for $V$, then we can
write $\rho:e_j \mapsto \sum_i e_i \otimes a_{ij}$, where
$(a_{ij})$ is the matrix formula for the representation in this
basis. Then the diagrams above are, in equation form
\begin{equation}
\label{comodEquation1} \Delta(a_{ij}) = \sum_k a_{ik} \otimes
a_{kj}
\end{equation}
\begin{equation}
\label{comodEquation2} \varepsilon(a_{ij}) = \delta_{ij}
\end{equation}

For an $n \times n$ strictly upper triangular matrix $M =
(m_{ij})$ with non-negative integer entries, let $x^M$ denote the
monomial expression
\[ x_{12}^{m_{12}} \ldots x_{1n}^{m_{1n}} x_{23}^{m_23} \ldots
x_{2n}^{m_{2n}} \ldots x_{n-1,n}^{m_{n-1,n}} \] Then the matrix
$(a_{ij})$ can be written uniquely as
\[ (a_{ij}) = \sum_M \chi(M) x^M \]
where the summation runs over all strictly upper triangular
matrices $M$ with non-negative integer entries, and $\chi(M)$ is
an $m \times m$ matrix with entries in $k$.

Example: consider the representation of $U_3$ with matrix formula
\[
\left(%
\begin{array}{cccccc}
  1 & 2x_{12} & x_{12} & 2x_{12}^2 & x_{13} & 2x_{12}x_{13} \\
   & 1 & 0 & x_{12} & 0 & x_{13} \\
   &  & 1 & 2x_{12} & x_{23} & 2x_{12}x_{23} \\
   &  &  & 1 & 0 & x_{23} \\
   &  &  &  & 1 & 2x_{12} \\
   &  &  &  &  & 1 \\
\end{array}%
\right)
\]
Let $M$ be the matrix
\[
\left(%
\begin{array}{ccc}
  0 & 1 & 1 \\
  0 & 0 & 0 \\
  0 & 0 & 0 \\
\end{array}%
\right)
\]
Then $x^M$ is the monomial $x_{12}x_{13}$, and to compute
$\chi(M)$ we ask, what is the `matrix of coefficients' of this
monomial in the above matrix formula?  That is
\[ \chi(M) =
\left(%
\begin{array}{cccccc}
  0 & 0 & 0 & 0 & 0 & 2 \\
   & 0 & 0 & 0 & 0 & 0 \\
   &  & 0 & 0 & 0 & 0 \\
   &  &  & 0 & 0 & 0 \\
   &  &  &  & 0 & 0 \\
   &  &  &  &  & 0 \\
\end{array}%
\right)
\]
Denote by $\varepsilon_{ij}$ the $n \times n$ matrix with a $1$ in
the $(i,j)^\text{th}$ entry, zeroes elsewhere.  Then the above
representation also gives
\[ \chi(\epsilon_{12})
=
\left(%
\begin{array}{cccccc}
  0 & 2 & 1 & 0 & 0 & 0 \\
   & 0 & 0 & 1 & 0 & 0 \\
   &  & 0 & 2 & 0 & 0 \\
   &  &  & 0 & 0 & 0 \\
   &  &  &  & 0 & 2 \\
   &  &  &  &  & 0 \\
\end{array}%
\right) \text{ and } \chi(2\varepsilon_{12}) =
\left(%
\begin{array}{cccccc}
  0 & 0 & 0 & 2 & 0 & 0 \\
   & 0 & 0 & 0 & 0 & 0 \\
   &  & 0 & 0 & 0 & 0 \\
   &  &  & 0 & 0 & 0 \\
   &  &  &  & 0 & 0 \\
   &  &  &  &  & 0 \\
\end{array}%
\right)
\]
and $\chi(2\varepsilon_{12} + \varepsilon_{13}) = 0$, since the
monomial $x_{12}^2 x_{13}$ never occurs in the representation.  We
must of course have $\chi(M) = 0$ for all but finitely many $M$,
since only finitely many monomials can occur in any given
representation.

In what follows $[X,Y]$ denotes the usual matrix Lie bracket
$XY-YX$.  The following implies part (2) of our main theorem, and
is what we will be proving in section \ref{necessitySection}.

\begin{theorem}
\label{TheMainTheoremSub} Let $k$ be a field of characteristic
$p>0$, $n
> 0$. If $p \geq 2d$, then every $d$-dimensional representation
$(V,\rho)$ of $U_n$ over $k$ satisfies, for all $m,n \geq 0$, and
all $1 \leq r < s \leq n$, $1 \leq t < u \leq n$:

\begin{enumerate}

\item{$\chi(p^m \varepsilon_{rs})$ is a nilpotent matrix}

\item{
\[
[\chi(p^m \varepsilon_{rs}),\chi(p^n \varepsilon_{tu})] = \left\{
\begin{array}{cc}
  0 & \hspace{.3cm}\text{ \emph{if }} m \neq n\\
  0 & \hspace{.3cm}\text{ \emph{if }} [\varepsilon_{rs},\varepsilon_{tu}] = 0 \\
  \chi(p^m[\varepsilon_{rs},\varepsilon_{tu}]) & \hspace{.3cm}\text{ \emph{ otherwise
  }}\\
\end{array}
\right.
\]
}

\end{enumerate}

\end{theorem}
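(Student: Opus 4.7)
The plan is to reduce each of (1) and (2) to the known representation theorem for an appropriate subgroup of $U_n$---either $G_a$, $G_a \times G_a$, or the Heisenberg group $U_3$---and to read off the assertion from the structure of the restricted representation. A single base-$p$ identification matches the Lie generators produced by those theorems with the $\chi(p^i \varepsilon_{ij})$, and this is the only place the hypothesis $p \geq 2d$ is used.

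First I would restrict $\rho$ along the closed embedding $G_a \hookrightarrow U_n$, $t \mapsto I + t\varepsilon_{rs}$ (the image is a subgroup since $\varepsilon_{rs}^2 = 0$). The restriction is the matrix polynomial $F(t) = \sum_{a \geq 0} \chi(a\varepsilon_{rs}) t^a$, and the theorem of \cite{SFB} factors it as $\exp(tX_0)\exp(t^p X_1)\cdots\exp(t^{p^k}X_k)$ with pairwise commuting nilpotent $X_j$. Expanding the (commuting) product and extracting the coefficient of $t^{p^i}$ produces a sum $\sum \frac{1}{a_0!\cdots a_k!} \prod_j X_j^{a_j}$ over nonnegative integer tuples $(a_0,\ldots,a_k)$ with $\sum_j a_j p^j = p^i$. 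Since $p \geq 2d > d$ each $X_j^p = 0$, so any solution with some $a_j \geq p$ drops out and only the unique base-$p$ solution $a_i = 1$ survives, giving $\chi(p^i \varepsilon_{rs}) = X_i$. This settles (1) together with the case $(r,s) = (t,u)$ of (2) at once.

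For $(r,s) \neq (t,u)$ the argument splits on the bracket. If $[\varepsilon_{rs}, \varepsilon_{tu}] = 0$ then $s \neq t$ and $u \neq r$, so $\varepsilon_{rs}\varepsilon_{tu} = \varepsilon_{tu}\varepsilon_{rs} = 0$ and the two one-parameter subgroups commute in $U_n$, assembling to a closed embedding $G_a \times G_a \hookrightarrow U_n$. The restriction $F(x,y)$ satisfies $F(x,0)F(0,y) = F(0,y)F(x,0)$ by the commutativity of $G_a \times G_a$, and comparing coefficients of $x^{p^m} y^{p^n}$ gives $[\chi(p^m \varepsilon_{rs}), \chi(p^n \varepsilon_{tu})] = 0$. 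If $[\varepsilon_{rs}, \varepsilon_{tu}] \neq 0$ then, up to interchanging the two pairs, $s = t$, and $\{\varepsilon_{rs}, \varepsilon_{su}, \varepsilon_{ru}\}$ span a Lie subalgebra of $\liefont{u}_n$ isomorphic to $\liefont{u}_3$, whose corresponding closed subgroup of $U_n$ is a Heisenberg $U_3$. Restricting $\rho$ and invoking Theorem \ref{heisenbergGroupTheorem}, whose only hypothesis is $p \geq 2d$, produces commuting nilpotent Frobenius-layer generators $X_i, Y_i, Z_i$ satisfying $[X_i, Y_i] = Z_i$ with all cross-layer and residual commutators vanishing. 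The same base-$p$ identification gives $X_i = \chi(p^i \varepsilon_{rs})$, $Y_i = \chi(p^i \varepsilon_{su})$, $Z_i = \chi(p^i \varepsilon_{ru})$, and these relations translate verbatim into (2).

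The only real obstacle is that base-$p$ identification: once it is established in each of the three subgroup reductions, every part of (1) and (2) is a direct consequence of the invoked theorem. The identification itself is a clean combinatorial check whose crux is $X_j^p = 0$ for every $d \times d$ nilpotent matrix (which requires $p > d$); this kills every contribution to the coefficient of $t^{p^i}$ (or $x^{p^m}y^{p^n}$, etc.) except the naive base-$p$ one. This is the sole manner in which the hypothesis $p \geq 2d$ enters the argument.
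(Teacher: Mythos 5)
Your proof is correct, but it follows a genuinely different route from the paper's. The paper first derives an explicit combinatorial formula for $(\Delta(a_{ij}))$ (proposition \ref{UnBigEquationProp}, the $L_{ij}$/$R_{ij}$ bookkeeping), then uses induction on $n$ through the four ``consecutive'' subgroups $U_{n-1}$, $U_{n-1}$, $G_a^{n-1}$, $G_a^{n-1}$, and finally computes the leftover brackets (top row against right-most column, and $\varepsilon_{1n}$ against the interior) by hand from the comodule equation, with lemma \ref{carryingLemma} killing the cross-layer terms. You instead observe that \emph{every} pair $\varepsilon_{rs},\varepsilon_{tu}$ already lies inside a coordinate subgroup of $U_n$ isomorphic to $G_a$, $G_a\times G_a$, or $U_3$: a vanishing bracket forces both matrix products $\varepsilon_{rs}\varepsilon_{tu}$ and $\varepsilon_{tu}\varepsilon_{rs}$ to vanish, while a nonvanishing bracket forces (up to swapping the pairs) $s=t$ and hence a Heisenberg triple on positions $(r,s),(s,u),(r,u)$. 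This is legitimate: the ideal of $A_n$ generated by the discarded variables is a Hopf ideal in each case (the only composable pair among the surviving positions is $(r,s),(s,u)$, whose composite $(r,u)$ also survives), so the restrictions genuinely are representations of $G_a\times G_a$ resp.\ $U_3$ with coefficient matrices inherited from $\rho$, and your base-$p$ identification $X_i=\chi(p^i\varepsilon_{rs})$ is exactly the extraction the paper itself performs in the proposition following theorem \ref{TheMainTheoremSub}. What your route buys is brevity and transparency --- the combinatorial apparatus of section 2 becomes unnecessary for this particular theorem. What it costs: the hard ``carrying'' computation has not vanished but is hidden inside the cited $U_3$ result (theorem \ref{heisenbergGroupTheorem}), so $p\geq 2d$ enters through that theorem's hypothesis and not only through your nilpotency step; and the paper's machinery is still needed downstream, since lemma \ref{UnDecompositionLemma} ($\chi(M)=\prod\chi(m_{ij}\varepsilon_{ij})$), which your argument does not yield, is required to pass from theorem \ref{TheMainTheoremSub} to part (2) of theorem \ref{TheMainTheorem}. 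One cosmetic point: for $a_j\geq p$ the term $X_j^{a_j}/a_j!$ is not ``zero because $X_j^p=0$'' --- $a_j!$ itself vanishes in characteristic $p$ --- rather, the truncated exponential of a $d\times d$ nilpotent matrix with $d<p$ simply has no terms with $a_j\geq d$, after which uniqueness of base-$p$ expansions does the rest.
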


\begin{proposition}
Theorem \ref{TheMainTheoremSub} implies part (2) of theorem
\ref{TheMainTheorem}.

\end{proposition}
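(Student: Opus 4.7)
My plan is to extract the Lie data $\phi_0, \ldots, \phi_m$ required by part (1) directly from the matrix coefficients of $\rho$, verify the hypotheses of part (1) using Theorem \ref{TheMainTheoremSub}, and finally show the resulting representation $\Phi$ agrees with $\rho$. Specifically, for each $i \geq 0$ I would define a linear map $\phi_i : \liefont{u}_n \to \liefont{gl}_d$ by $\phi_i(\varepsilon_{rs}) := \chi(p^i \varepsilon_{rs})$, extended linearly. Since the matrix formula of $\rho$ involves only finitely many monomials, only finitely many $\phi_i$ are nonzero, so we obtain a finite list $\phi_0, \ldots, \phi_m$.

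Next I would check the three hypotheses of part (1). The $m=n$ case of Theorem \ref{TheMainTheoremSub}(2) is exactly the Lie-homomorphism condition on generators, $[\phi_i(\varepsilon_{rs}), \phi_i(\varepsilon_{tu})] = \phi_i([\varepsilon_{rs}, \varepsilon_{tu}])$, so by bilinearity each $\phi_i$ is a Lie algebra homomorphism. The $m \neq n$ case gives the cross-layer commutativity $[\phi_i(X), \phi_j(Y)] = 0$ for $i \neq j$, again on generators and then extended by bilinearity. Nilpotency of $\phi_i(\varepsilon_{rs})$ on generators is Theorem \ref{TheMainTheoremSub}(1); extending to arbitrary $X \in \liefont{u}_n$ follows from an Engel/Kolchin-style argument applied to $\phi_i(\liefont{u}_n)$, which is a homomorphic image of the nilpotent Lie algebra $\liefont{u}_n$ generated by nilpotent matrices.

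With all hypotheses in hand, applying part (1) of Theorem \ref{TheMainTheorem} yields a representation $\Phi$ as a product of twisted exponentials. The main obstacle is the final step: proving $\Phi = \rho$. By construction, $\chi_\Phi(p^i \varepsilon_{rs}) = \phi_i(\varepsilon_{rs}) = \chi_\rho(p^i \varepsilon_{rs})$, so the two representations agree on all ``primitive'' monomials $x_{rs}^{p^i}$. To extend this agreement to arbitrary monomials $x^M$, I would induct on a suitable filtration (say, by total degree or more finely by the $p$-adic expansions of the entries of $M$), using the comodule axiom $\Delta(a_{ij}) = \sum_k a_{ik} \otimes a_{kj}$ to express each $\chi(M)$ recursively in terms of $\chi(M')$ for strictly smaller $M'$. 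Since $\rho$ and $\Phi$ obey the same recursion with matching base cases, they must agree on every monomial, establishing $\Phi = \rho$ and thereby part (2).
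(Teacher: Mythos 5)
Your overall strategy matches the paper's: define $\phi_l(\varepsilon_{rs}) := \chi(p^l\varepsilon_{rs})$, read off the Lie-homomorphism and cross-layer commutativity conditions from Theorem \ref{TheMainTheoremSub}, invoke part (1) to get a representation $\Phi$, and then identify $\Phi$ with $\rho$ by reducing to the monomials $x_{rs}^{p^l}$. Two remarks on the identification step, which is where the real work lives. First, your phrase ``by construction, $\chi_\Phi(p^i\varepsilon_{rs}) = \phi_i(\varepsilon_{rs})$'' conceals the one substantive computation in this proof: one must expand
\[ \Phi(g) = \sum_{k_0,\ldots,k_m} \left(\frac{\phi_0(\log g)^{k_0}}{k_0!}\right)\cdots\left(\frac{\phi_m(\log g)^{k_m}}{k_m!}\right)^{[p^m]} \]
and argue that no product of terms from other layers (nor a higher power within layer $l$) can contribute to the single-variable monomial $x_{rs}^{p^l}$. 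This is exactly where the hypothesis $p \geq 2d$ enters (each $k_i \leq d \leq p/2$ forces all $k_i = 0$ except $k_l = 1$), and it is the only place the proposition's proof uses that bound; asserting it ``by construction'' is a gap, albeit a fillable one. Second, your proposed induction on a filtration via the comodule axiom is morally right but is made precise in the paper by two concrete factorization lemmas: Lemma \ref{UnDecompositionLemma} (every $\chi(M)$ is an ordered product of the $\chi(m_{ij}\varepsilon_{ij})$) and Lemma \ref{UntoGaLemma} (each $\chi(r\varepsilon_{ij})$ factors through the $\chi(p^l\varepsilon_{ij})$ via the $p$-ary expansion of $r$). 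Together these show every $\chi(M)$ is determined by the values on the primitive monomials, so agreement there forces $\Phi = \rho$; your sketch should either cite such a factorization or actually exhibit the claimed recursion, since the comodule axiom alone does not obviously express $\chi(M)$ in terms of strictly smaller $\chi(M')$ without the analysis of equation \ref{TheMainEquation} that underlies those lemmas.
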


\begin{proof}
For each $l$ and $1 \leq r < s \leq n$, set
\[ \phi_l(\varepsilon_{rs}) = \chi(p^l \varepsilon_{rs}) \]
and extend each $\phi_l$ linearly to all of $\liefont{u}_n$.  Then
for fixed $l$, the previous theorem gives
\[ [\phi_l(\varepsilon_{rs}),\phi_l(\varepsilon_{tu}) ] =
\phi_l([\varepsilon_{rs},\varepsilon_{tu}]) \] which says that
each $\phi_l$ is a Lie algebra homomorphism.  For $m \neq l$, the
previous theorem gives
\[ [\phi_l(\varepsilon_{rs}), \phi_m(\varepsilon_{tu}) ] =
[\chi(p^l \varepsilon_{rs}),\chi(p^m \varepsilon_{tu}) ] = 0 \]
which is predicted by part (2) of theorem \ref{TheMainTheorem}.
Finally,
 assuming that the formula
\[ \Phi(g) = e^{\phi(\log(g))} {e^{\phi_1(\log(g))}}^{[p]} \ldots
{e^{\phi_m(\log(g))}}^{[p^m]} \] actually is a representation of
$U_n$ (which will be proven in section \ref{sufficiencySection}),
we want to see that it is the same representation as $(a_{ij}) =
\sum_M \chi(M) x^M$.  Lemmas \ref{UnDecompositionLemma} and
\ref{UntoGaLemma} given in section \ref{necessitySection} make it
clear that it suffices to check that, for each $r,s$ and $l$, the
matrix of coefficients of the monomial $x_{rs}^{p^l}$ in the first
formula is actually $\chi(p^l \varepsilon_{rs})$, the same as in
the second formula.

We prove this first for the case of $m=0$. Let $g = 1 + \sum_{1
\leq i < j \leq n} x_{ij}\varepsilon_{ij}$ be an arbitrary element
of $U_n$. Then we leave it to the reader to verify that
\begin{equation*}
\begin{split}
\log(g) &\stackrel{\text{def}}{=} \sum_{k=1}^{n-1}
\frac{(-1)^{k-1}}{k}(g-1)^k \\
&= \sum_{1 \leq i < j \leq n} x_{ij} \varepsilon_{ij} + \ldots \\
\end{split}
\end{equation*}
where $(\ldots)$ denotes a sum of matrices whose monomial
coefficients have length greater than $1$.  As $\phi_0$ is linear
we have
\[ \phi_0(\log(g)) = \sum_{1 \leq i < j \leq n} x_{ij}
\phi_0(\varepsilon_{ij}) + \phi_0(\ldots) \] Then
\begin{equation*}
\begin{split}
\Phi(g) &= e^{\phi_0(\log(g))} \\
&= 1 + \left(\sum_{1 \leq i < j \leq n} x_{ij}
\phi_0(\varepsilon_{ij})
+ \phi_0(\ldots)\right) + \ldots \\
\end{split}
\end{equation*}
where $(\ldots)$ denotes terms including higher powers of
$\phi_0(\log(g))$. Clearly then we have that the matrix of
coefficients of the terms $x_{ij}$ are exactly
$\phi_0(\varepsilon_{ij}) = \chi(\varepsilon_{ij})$.

For $m > 0$, consider
\begin{equation*}
\begin{split}
 \Phi(g) &= e^{\phi_0(\log(g))}{e^{\phi_1(\log(g))}}^{[p]} \ldots
{e^{\phi_l(\log(g))}}^{[p^l]} \ldots {e^{\phi_m(\log(g))}}^{[p^m]}
\\
&= \sum_{k_0=0}^d \ldots \sum_{k_l=0}^d \ldots \sum_{k_m=0}^d
\left(\frac{\phi_0(\log(g))^{k_0}}{k_0!}\right)
\dots\left(\frac{\phi_l(\log(g))^{k_l}}{k_l!}\right)^{[p^l]} \dots
\left(\frac{\phi_m(\log(g))^{k_m}}{k_m!}\right)^{[p^m]}
\end{split}
\end{equation*}
Since $d \leq p/2$, we see that the only contribution to the
monomial $x_{rs}^{p^l}$ comes when all $k_i$ are zero except $k_l
= 1$, and thus the matrix of coefficients of the monomial
$x_{rs}^{p^l}$ is exactly $\phi_l(\varepsilon_{rs})$.  Recalling
that $\phi_i(\varepsilon_{rs}) \stackrel{\text{def}}{=}
\chi(p^i\varepsilon_{rs})$, this proves that $\Phi(g) = (a_{ij})$.

\end{proof}

Our approach to proving theorem \ref{TheMainTheoremSub} is
partially inductive. Take for example the group $U_4$:
\[
\left(%
\begin{array}{cccc}
  1 & x_{12} & x_{13} & x_{14} \\
   & 1 & x_{23} & x_{24} \\
   &  & 1 & x_{34} \\
   &  &  & 1 \\
\end{array}%
\right)
\]
This group contains four conspicuous subgroups, namely those
matrices of the form
\[ \label{embeddings}
\left(%
\begin{array}{cccc}
  1 & x_{12} & x_{13} & 0 \\
   & 1 & x_{23} & 0 \\
   &  & 1 & 0 \\
   &  &  & 1 \\
\end{array}%
\right),
\left(%
\begin{array}{cccc}
  1 & 0 & 0 & 0 \\
   & 1 & x_{23} & x_{24} \\
   &  & 1 & x_{34} \\
   &  &  & 1 \\
\end{array}%
\right),
\left(%
\begin{array}{cccc}
  1 & x_{12} & x_{13} & x_{14} \\
   & 1 & 0 & 0 \\
   &  & 1 & 0 \\
   &  &  & 1 \\
\end{array}%
\right)\]

\[ \text{ and }
\left(%
\begin{array}{cccc}
  1 & 0 & 0 & x_{14} \\
   & 1 & 0 & x_{24} \\
   &  & 1 & x_{34} \\
   &  &  & 1 \\
\end{array}%
\right)
\]
which are isomorphic to, respectively, $U_3$, $U_3$, $G_a^3$, and
$G_a^3$ (three-fold direct product of the Additive group).  Since
all of these various embeddings are given by simply setting
certain variables to zero, by the main result given in
\cite{myPaper} for the Heisenberg group, and by the result given
in \cite{SFB} or \cite{MyDissertation} for products of the
Additive group, we immediately conclude by induction that, for
instance,
\[[\chi(p^m \varepsilon_{23}),\chi(p^m \varepsilon_{34})] =
\chi(p^m[\varepsilon_{24}]) \] since the variables $x_{23}$ and
$x_{34}$ already occur together in a subgroup isomorphic to $U_3$.
Thus, by induction, the only brackets $[\chi(p^m
\varepsilon_{rs}),\chi(p^n \varepsilon_{tu})]$ that need to be
checked are those that do \emph{not} occur together in one of the
above given subgroups, namely
\begin{enumerate}
\item{when $r=1$ and $u=4$ (i.e.~when the first variable is in the
top row, and the second is in the right-most column)} \item{when
$\varepsilon_{rs} = \varepsilon_{14}$, and when $t \neq 1$ and $u
\neq 4$ (i.e.~when the first variable is in the top right corner,
and the second variable is in neither the top row nor the
right-most column)}
\end{enumerate}
Restricting ourselves to these cases shall, as we will see,
drastically simplify our calculations.

\section{Combinatorics}

Our arguments for proving theorem \ref{TheMainTheoremSub} are
heavily combinatorial, and the notation necessary to prove our
main theorem can be at times confusing in the abstract, so as we
go along we shall provide examples, using the case $n=4$, i.e.~the
group $U_4$, as a template.

The learned reader may recognize that what we are really doing in
this section is working out certain facts about the multiplicative
structure of $\text{Dist}(U_n)$, the distribution algebra of $U_n$
(see chapter 7 of \cite{jantzen}), but we have no need for this
terminology, and no understanding of distribution algebras is
assumed on the part of the reader.

Let $n$ be arbitrary, and throughout, fix a $d$-dimensional
representation $(a_{ij})$ for $U_n$ over the field $k$, and write
\[ (a_{ij}) = \sum_M \chi(M) x^M \]
where the summation runs over all $n \times n$ strictly upper
triangular matrices $M = (m_{ij})$ with non-negative integer
entries, and $\chi(M)$ is a $d \times d$ matrix with entries in
$k$ for each $M$, and where we define
\[ x^M \stackrel{\text{defn}}{=} \prod_{1 \leq i < j \leq n}
x_{ij}^{m_{ij}} \] Define the following variable matrices $S_1,
\ldots, S_n$.  The non-zero entries of $S_k$ are written
$s_{ij}^k$.  $S_1$ and $S_2$ are demanded to be strictly upper
triangular, $S_3$ is strictly-strictly upper triangular, $S_4$ is
strictly-strictly-strictly upper triangular, $\ldots$, $S_n$ is
$(n-1) \times$ strictly upper triangular (i.e., $S_n$ has a
non-zero entry in its $(1,n)^{\text{th}}$ spot only). In other
words, for each $1 \leq i < j \leq n$ we have the variables
$s_{ij}^1$ and for each $2 \leq k \leq n$, and each $1 \leq i < j
\leq n$ with $j-i \geq k-1$, we have the variables $s_{ij}^k$.

Example: in the case of $n=4$, the matrices $S_1, S_2, S_3, S_4$
are given by
\begin{equation*}
\begin{array}{cc}
S_1 =
\left(%
\begin{array}{cccc}
  0 & \s{1}{1}{2} & \s{1}{1}{3} & \s{1}{1}{4} \\
   & 0 & \s{1}{2}{3} & \s{1}{2}{4} \\
   &   & 0 & \s{1}{3}{4} \\
   &   &   & 0 \\
\end{array}%
\right) & S_2 =
\left(%
\begin{array}{cccc}
  0 & \s{2}{1}{2} & \s{2}{1}{3} & \s{2}{1}{4} \\
   & 0 & \s{2}{2}{3} & \s{2}{2}{4} \\
   &   & 0 & \s{2}{3}{4} \\
   &   &   & 0 \\
\end{array}%
\right)
\\
 & \\
 S_3 =
\left(%
\begin{array}{cccc}
  0 & 0 & \s{3}{1}{3} & \s{3}{1}{4} \\
   & 0 & 0 & \s{3}{2}{4} \\
   &   & 0 & 0 \\
   &   &   & 0 \\
\end{array}%
\right) & S_4 = \left(
\begin{array}{cccc}
  0 & 0 & 0 & \s{4}{1}{4} \\
   & 0 & 0 & 0 \\
   &   & 0 & 0 \\
   &   &   & 0 \\
\end{array}%
\right)
\\
\end{array}
\end{equation*}

For $1 \leq i < j \leq n$, define the following variable
expressions among the $s_{ij}^k$:
\[ L_{ij} \stackrel{\text{def}}{=} \sum_{k=j}^n s_{ik}^{j-i+1}
\text{ and } R_{ij} \stackrel{\text{def}}{=} \sum_{k=1}^i
s_{k,j}^{i-k+1} \] For example, in the case of $n=4$, $L_{12} =
s_{12}^2 + s_{13}^2 + s_{14}^2$, and $R_{34} = s_{14}^3 + s_{24}^2
+ s_{34}^1$.

The following notation will be useful: if $B_1 = (b_{ij}^1),
\ldots, B_k = (b_{ij}^k), M = (m_{ij})$ are matrices with
non-negative integer entries such that $B_1 + \ldots + B_k = M$,
then the formal multinomial expression
\[{M \choose B_1, \ldots, B_k} \]
is shorthand for $\prod_{ij} {m_{ij} \choose b_{ij}^1, \ldots,
b_{ij}^k}$.  In other words, it is just the product of the
multinomial coefficients of the individual entries.  For example
\[
\left(
\left(%
\begin{array}{ccc}
  0 & 2 & 3 \\
  0 & 0 & 1 \\
  0 & 0 & 0 \\
\end{array}%
\right) \text{ choose }
\left(%
\begin{array}{ccc}
  0 & 1 & 2 \\
  0 & 0 & 0 \\
  0 & 0 & 0 \\
\end{array}%
\right),\left(%
\begin{array}{ccc}
  0 & 1 & 0 \\
  0 & 0 & 1 \\
  0 & 0 & 0 \\
\end{array}%
\right),\left(%
\begin{array}{ccc}
  0 & 0 & 1 \\
  0 & 0 & 0 \\
  0 & 0 & 0 \\
\end{array}%
\right) \right)
\]
equals
\[ {2 \choose 1,1,0}{1 \choose 0,1,0}{3 \choose 2,0,1} = 6 \]

\begin{proposition}
\label{UnBigEquationProp} In the notation above, the matrix
$(\Delta(a_{ij}))$ is equal to
\[ \sum_M \chi(M) \left[ \sum_{S_1 + \ldots + S_n = M} {S_1 +
\ldots + S_n \choose S_1, \ldots, S_n} \left( \prod_{1 \leq i < j
\leq n} x_{ij}^{L_{ij}} \right) \otimes \left( \prod_{1 \leq i < j
\leq n} x_{ij}^{R_{ij}} \right) \right] \] where the first
summation runs over all $n \times n$ strictly upper triangular
matrices $M$ with non-negative integer entries, and the second
runs over all $S_1 + S_2 + \ldots + S_n = M$ with non-negative
integer entries of the form defined in the above paragraph.
\end{proposition}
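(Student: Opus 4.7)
The plan is to expand $\Delta$ entry-wise on $(a_{ij}) = \sum_M \chi(M) x^M$. Since each $\chi(M) \in M_d(k)$ is a matrix of scalars and $\Delta$ is $k$-linear and multiplicative on $A_n$, this reduces immediately to computing $\Delta(x^M) = \prod_{1 \leq i < j \leq n} \Delta(x_{ij})^{m_{ij}}$ for each strictly upper triangular non-negative integer matrix $M$. A small notational simplification will help: adopting the convention $x_{ii} = 1$, the comultiplication given in the paper can be written uniformly as $\Delta(x_{ij}) = \sum_{k=i}^{j} x_{ik} \otimes x_{kj}$.

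Next I apply the multinomial theorem to each factor $\Delta(x_{ij})^{m_{ij}}$. Each term in the expansion is indexed by a choice of non-negative integers $s_{ij}^{1}, \ldots, s_{ij}^{j-i+1}$ summing to $m_{ij}$, where $s_{ij}^{l}$ records how many times the split index $k = i + l - 1$ was selected. Packaging these data into matrices by setting $(S_l)_{ij} = s_{ij}^{l}$, one obtains precisely matrices $S_1, \ldots, S_n$ of the shape prescribed in the excerpt (since $s_{ij}^{l}$ is defined only for $l \leq j - i + 1$, i.e.\ $j - i \geq l - 1$), and the per-entry constraints $\sum_l s_{ij}^l = m_{ij}$ become the single matrix equation $S_1 + \cdots + S_n = M$. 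The product of multinomial coefficients assembles into $\prod_{i<j} {m_{ij} \choose s_{ij}^1, \ldots, s_{ij}^{j-i+1}} = {M \choose S_1, \ldots, S_n}$.

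It will then remain to identify the exponent of a generic $x_{pq}$ in each tensor factor. In the left factor the contributions are $x_{i, i+l-1}^{s_{ij}^l}$, which for $x_{pq}$ force $i = p$ and $l = q - p + 1$, with $j$ free in $\{q, \ldots, n\}$; this gives exactly $L_{pq} = \sum_{k=q}^n s_{pk}^{q-p+1}$, while the $l = 1$ contributions (producing $x_{ii} = 1$) drop out harmlessly. In the right factor the contributions are $x_{i+l-1, j}^{s_{ij}^l}$, which for $x_{pq}$ force $j = q$ and $i = p - l + 1$; reindexing by $k = i$ yields $R_{pq} = \sum_{k=1}^p s_{kq}^{p-k+1}$. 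Summing over $M$ and over all decompositions $S_1 + \cdots + S_n = M$, and restoring the coefficient $\chi(M)$, gives the stated formula.

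The only real obstacle is the index bookkeeping: pinning down the bijection between the $k$-choices in the multinomial expansion and the layer index $l$ in $s_{ij}^l$ (whose range depends on $(i,j)$), and then verifying that the collected exponents coincide on the nose with $L_{pq}$ and $R_{pq}$. There is no genuine algebraic difficulty here — the proposition is essentially a careful repackaging of the multinomial expansion of $\Delta(x^M)$ in the Hopf algebra $A_n$.
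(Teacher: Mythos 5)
Your proposal is correct and follows essentially the same route as the paper's own proof: expand $\Delta(x^M)$ multiplicatively, apply the multinomial theorem to each factor $\Delta(x_{ij})^{m_{ij}}$, package the exponents into the matrices $S_1,\ldots,S_n$ with the constraint $S_1+\cdots+S_n=M$, and collect the exponent of each $x_{pq}$ in the two tensor slots to recover $L_{pq}$ and $R_{pq}$. The only (harmless) cosmetic difference is your convention $x_{ii}=1$, which lets you write $\Delta(x_{ij})=\sum_{k=i}^{j}x_{ik}\otimes x_{kj}$ uniformly rather than separating out the $1\otimes x_{ij}$ and $x_{ij}\otimes 1$ terms as the paper does.
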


Before we prove the proposition, let us illustrate with an example
what it is actually saying, again using the case of $n=4$.  Using
$(a_{ij}) = \sum_M \chi(M) x^M$, we have
\begin{equation*}
\begin{split}
\Delta(a_{ij}) &= \sum_M \chi(M) \Delta(x^M) \\
&=\sum_M \chi(M)_{ij} \Delta(x_{12})^{m_{12}}
\Delta(x_{23})^{m_{23}} \Delta(x_{34})^{m_{34}}
\Delta(x_{13})^{m_{13}} \Delta(x_{24})^{m_{24}}
\Delta(x_{14})^{m_{14}} \\
& = \sum_M \chi(M)_{ij} (1 \otimes x_{12} + x_{12} \otimes
1)^{m_{12}}(1 \otimes x_{23} + x_{23} \otimes 1)^{m_{23}}(1
\otimes x_{34} + x_{34} \otimes 1)^{m_{34}} \\
& \quad \quad \quad \quad \quad \quad (1 \otimes x_{13} + x_{12}
\otimes x_{23} + x_{13} \otimes 1)^{m_{13}}(1 \otimes x_{24} +
x_{23} \otimes x_{34} + x_{24} \otimes 1)^{m_{24}} \\
& \quad \quad \quad \quad \quad \quad (1 \otimes x_{14} + x_{12}
\otimes x_{24} + x_{13} \otimes x_{34} + x_{14} \otimes
1)^{m_{14}} \\
\end{split}
\end{equation*}
and by repeated application of the binomial/multinomial theorem
\begin{equation*}
\begin{split}
= \sum_M & \chi(M) \\
& \left( \sum_{\s{1}{1}{2}+\s{2}{1}{2} = m_{12}}{m_{12} \choose \s{1}{1}{2},\s{2}{1}{2}} (1 \otimes x_{12})^{s_{12}^1} (x_{12} \otimes 1)^{s_{12}^2}         \right) \\
& \left( \sum_{\s{1}{2}{3}+\s{2}{2}{3} = m_{23}} {m_{23} \choose \s{1}{2}{3},\s{2}{2}{3}} (1 \otimes x_{23})^{s_{23}^1}(x_{23} \otimes 1)^{s_{23}^2}        \right) \\
& \left( \sum_{\s{1}{3}{4}+\s{2}{3}{4} = m_{24}} {m_{34} \choose \s{1}{3}{4},\s{2}{3}{4}} (1 \otimes x_{34})^{s_{34}^1} (x_{34} \otimes 1)^{s_{34}^2}          \right) \\
& \left( \sum_{\s{1}{1}{3} + \s{2}{1}{3} + \s{3}{1}{3} = m_{13}}
{m_{13} \choose \s{1}{1}{3}, \s{2}{1}{3}, \s{3}{1}{3}} (1 \otimes
x_{13})^{s_{13}^1}(x_{12} \otimes x_{23})^{s_{13}^2}(x_{13}
\otimes 1)^{s_{13}^3}\right)
\\
& \left( \sum_{\s{1}{2}{4} + \s{2}{2}{4} + \s{3}{2}{4} = m_{24}}
{m_{24} \choose \s{1}{2}{4}, \s{2}{2}{4}, \s{3}{2}{4}  } (1
\otimes x_{24})^{s_{24}^1}(x_{23} \otimes x_{34})^{s_{24}^2}
(x_{24} \otimes 1)^{s_{24}^3} \right)
\\
& \left( \sum_{\s{1}{1}{4} + \s{2}{1}{4} + \s{3}{1}{4} +
\s{4}{1}{4} = m_{14}} {m_{14} \choose \s{1}{1}{4}, \s{2}{1}{4},
\s{3}{1}{4}, \s{4}{1}{4}  } (1 \otimes x_{14})^{s_{14}^1}(x_{12} \otimes x_{24})^{s_{14}^2} (x_{13} \otimes x_{34})^{s_{14}^3} (x_{14} \otimes 1)^{s_{14}^4}        \right) \\
\end{split}
\end{equation*}
and, condensing
\begin{equation*}
\begin{split}
= \sum_M & \chi(M) \\
& \left( \sum_{\s{1}{1}{2}+\s{2}{1}{2} = m_{12}}{m_{12} \choose \s{1}{1}{2},\s{2}{1}{2}} x_{12}^{\s{2}{1}{2}} \otimes x_{12}^{\s{1}{1}{2}}  \right) \\
& \left( \sum_{\s{1}{2}{3}+\s{2}{2}{3} = m_{23}} {m_{23} \choose \s{1}{2}{3},\s{2}{2}{3}} x_{23}^{\s{2}{2}{3}} \otimes x_{23}^{\s{1}{2}{3}} \right) \\
& \left( \sum_{\s{1}{3}{4}+\s{2}{3}{4} = m_{24}} {m_{34} \choose \s{1}{3}{4},\s{2}{3}{4}} x_{34}^{\s{2}{3}{4}} \otimes x_{34}^{\s{1}{3}{4}} \right) \\
& \left( \sum_{\s{1}{1}{3} + \s{2}{1}{3} + \s{3}{1}{3} = m_{13}}
{m_{13} \choose \s{1}{1}{3}, \s{2}{1}{3}, \s{3}{1}{3}}
x_{12}^{\s{2}{1}{3}} x_{13}^{\s{3}{1}{3}} \otimes
x_{13}^{\s{1}{1}{3}} x_{23}^{\s{2}{1}{3} } \right)
\\
& \left( \sum_{\s{1}{2}{4} + \s{2}{2}{4} + \s{3}{2}{4} = m_{24}}
{m_{24} \choose \s{1}{2}{4}, \s{2}{2}{4}, \s{3}{2}{4}  }
x_{23}^{\s{2}{2}{4}} x_{24}^{\s{3}{2}{4}} \otimes
x_{24}^{\s{1}{2}{4}} x_{34}^{\s{2}{2}{4}}\right)
\\
& \left( \sum_{\s{1}{1}{4} + \s{2}{1}{4} + \s{3}{1}{4} +
\s{4}{1}{4} = m_{14}} {m_{14} \choose \s{1}{1}{4}, \s{2}{1}{4},
\s{3}{1}{4}, \s{4}{1}{4}  } x_{12}^{\s{2}{1}{4}}
x_{13}^{\s{3}{1}{4}} x_{14}^{\s{4}{1}{4}} \otimes
x_{14}^{\s{1}{1}{4}}
x_{24}^{\s{2}{1}{4}} x_{34}^{\s{3}{1}{4}} \right) \\
\end{split}
\end{equation*}
With the variable matrices $S_1,S_2,S_3,S_4$ so previously
defined, we can collapse all of these summations to
\begin{equation*}
\label{U4deltaEquation2}
\begin{split}
 \sum_M \chi(M) & \left [\sum_{S_1 + S_2 + S_3 + S_4 = M} {M \choose S_1, S_2, S_3, S_4}\right.\\
& \quad \quad \quad
x_{12}^{\s{2}{1}{2}+\s{2}{1}{3}+\s{2}{1}{4}}x_{23}^{\s{2}{2}{3}+\s{2}{2}{4}}x_{34}^{\s{2}{3}{4}}
x_{13}^{\s{3}{1}{3} + \s{3}{1}{4}}
x_{24}^{\s{3}{2}{4}}x_{14}^{\s{4}{1}{4}} \\
& \quad \bigotimes \\
&\quad \quad \quad
x_{12}^{\s{1}{1}{2}}x_{23}^{\s{1}{2}{3}+\s{2}{1}{3}}
x_{34}^{\s{1}{3}{4}+\s{2}{2}{4}+\s{3}{1}{4}}
x_{13}^{\s{1}{1}{3}} x_{24}^{\s{1}{2}{4}+\s{2}{1}{4}} x_{14}^{\s{1}{1}{4}} \biggr] \\
\end{split}
\end{equation*}
which the reader can verify by hand can be written
\[ \sum_M \chi(M) \left[ \sum_{S_1 + S_2 + S_3 + S_4 = M} {S_1 + S_2 + S_3 + S_4 \choose S_1, S_2, S_3, S_4} \left( \prod_{1 \leq i <
j \leq 4} x_{ij}^{L_{ij}} \right) \otimes \left( \prod_{1 \leq i <
j \leq 4} x_{ij}^{R_{ij}} \right) \right] \]

We now prove proposition \ref{UnBigEquationProp} for arbitrary
$n$.

\begin{proof}
Compute: $(\Delta(a_{ij})) =$
\begin{equation*}
\begin{split}
&= \sum_M \chi(M) \Delta(x^M) \\
&= \sum_M \chi(M) \prod_{1 \leq i < j \leq n}
\Delta(x_{ij})^{m_{ij}} \\
&=\sum_M \chi(M) \prod_{1 \leq i < j \leq n} (1 \otimes x_{ij} +
\sum_{k=i+1}^{j-1} x_{ik} \otimes x_{kj} + x_{ij} \otimes
1)^{m_{ij}} \\
&= \sum_M \chi(M) \prod_{1 \leq i < j \leq n} \left(
\sum_{s_{ij}^1 + \ldots + s_{ij}^{j-i+1} = m_{ij}} {s_{ij}^1 +
\ldots +
s_{ij}^{j-i+1} \choose s_{ij}^1, \ldots ,s_{ij}^{j-i+1}} \right. \\
& \quad \quad \quad \quad \quad (1 \otimes x_{ij})^{s_{ij}^1}
\prod_{k=i+1}^{j-1} (x_{ik} \otimes x_{kj})^{s_{ij}^{k-i+1}}
(x_{ij} \otimes 1)^{s_{ij}^{j-i+1}} \biggr) \\
&= \sum_M \chi(M) \prod_{1 \leq i < j \leq n} \left(
\sum_{s_{ij}^1 + \ldots + s_{ij}^{j-i+1} = m_{ij}} {s_{ij}^1 +
\ldots +
s_{ij}^{j-i+1} \choose s_{ij}^1, \ldots ,s_{ij}^{j-i+1}} \right. \\
& \quad \quad \quad \quad \quad x_{i,i+1}^{s_{ij}^2}
x_{i,i+2}^{s_{ij}^3} \ldots x_{ij}^{s_{ij}^{j-i+1}} \otimes
x_{ij}^{s_{ij}^1} x_{i+1,j}^{s_{ij}^2} \ldots
x_{j-1,j}^{s_{ij}^{j-i}} \biggr) \\
&= \sum_M \chi(M) \hspace{-2cm} \sum_{{s_{12}^1 + s_{12}^2 =
m_{12}} \atop {{ s_{23}^1+s_{23}^2 = m_{23}  } \atop {{ \vdots }
\atop {{ s_{n-1,n}^1 + s_{n-1,n}^2 = m_{n-1,n}  } \atop {{
s_{13}^1 + s_{13}^2 + s_{13}^3 = m_{13} } \atop {{s_{24}^1 +
s_{24}^2 + s_{24}^3 = m_{24}} \atop {{ \vdots } \atop {{
s_{n-2,n}^1 + s_{n-2,n}^2 + s_{n-2,n}^3 = m_{n-2,n} } \atop {{
\vdots  } \atop {s_{1,n}^1 + s_{1,n}^2 + \ldots + s_{1,n}^n =
m_{1,n} }}}}}}}}} } \hspace{-1.5cm} \prod_{1 \leq i < j \leq n}
{s_{ij}^1 + \ldots + s_{ij}^{j-i+1} \choose s_{ij}^1, \ldots
,s_{ij}^{j-i+1}} x_{i,i+1}^{s_{ij}^2} x_{i,i+2}^{s_{ij}^3} \ldots
x_{ij}^{s_{ij}^{j-i+1}} \otimes x_{ij}^{s_{ij}^1}
x_{i+1,j}^{s_{ij}^2} \ldots x_{j-1,j}^{s_{ij}^{j-i}}
\\
&= \sum_M \chi(M) \sum_{S_1 + \ldots + S_n = M} {M \choose S_1,
\ldots, S_n} \prod_{1 \leq i < j \leq n} x_{i,i+1}^{s_{ij}^2}
x_{i,i+2}^{s_{ij}^3} \ldots x_{ij}^{s_{ij}^{j-i+1}} \otimes
x_{ij}^{s_{ij}^1}
x_{i+1,j}^{s_{ij}^2} \ldots x_{j-1,j}^{s_{ij}^{j-i}} \\
\end{split}
\end{equation*}
Our task now is to write the expression $\prod_{1 \leq i < j \leq
n} x_{i,i+1}^{s_{ij}^2} x_{i,i+2}^{s_{ij}^3} \ldots
x_{ij}^{s_{ij}^{j-i+1}} \otimes x_{ij}^{s_{ij}^1}
x_{i+1,j}^{s_{ij}^2} \ldots x_{j-1,j}^{s_{ij}^{j-i}}$ in the form
\[ \left( \prod_{1 \leq i < j \leq n} x_{ij}^{L_{ij}} \right)
\otimes \left( \prod_{1 \leq i < j \leq n} x_{ij}^{R_{ij}} \right)
\]
and to figure out what $L_{ij}$ and $R_{ij}$ are.  To compute
$L_{ij}$ we ask, in which factors, for $1 \leq u < k \leq n$, does
the term $x_{ij}$ occur in the left tensor slot, and what is its
exponent?  For this to happen, it is necessary and sufficient that
 $u=i$ and $j \leq k$, and in this case the exponent
of $x_{ij}$ is $s_{i,k}^{j-i+1}$.  Summing this expression over
all $k$ greater than or equal to $j$ gives
\[ L_{ij} = \sum_{k=j}^n s_{ik}^{j-i+1} \]
as claimed.  To compute $R_{ij}$ we ask, for which $1 \leq k < u
\leq n$ does $x_{ij}$ occur in the right tensor slot, and what is
its exponent?  For this to happen, it is necessary and sufficient
that $u = j$, and that $k \leq i$, and in this case the exponent
of $x_{ij}$ is $s_{k,j}^{i-k+1}$.  Summing this expression over
all $k$ less than or equal to $i$ gives
\[ R_{ij} = \sum_{k=1}^i s_{k,j}^{i-k+1} \]
as claimed.  This completes the proof.
\end{proof}

\begin{proposition}
\label{UnMiniProp} The matrix $(\sum_k a_{ik} \otimes a_{kj})$ is
equal to
\[ \sum_{M,N} \chi(M) \chi(N) x^M \otimes x^N \]
where the summation runs over all $n \times n$ strictly upper
triangular matrices with non-negative integer entries, and
$\chi(M) \chi(N)$ is the usual product of matrices.
\end{proposition}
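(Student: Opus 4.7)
The approach is a direct, elementary computation: substitute the expansion $(a_{ij}) = \sum_M \chi(M) x^M$ into both factors of $\sum_k a_{ik} \otimes a_{kj}$ and recognize the resulting sum as the $(i,j)$ entry of the claimed matrix. Unlike the preceding proposition, which required the full combinatorial apparatus of the $S_1,\ldots,S_n$ decomposition in order to unpack $\Delta(x^M)$, this proposition has no Hopf-structure content at all; it is essentially just a restatement of the definition of matrix multiplication in a setting where the entries live in a tensor product.

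More precisely, I would write $a_{ik} = \sum_M \chi(M)_{ik} x^M$ and $a_{kj} = \sum_N \chi(N)_{kj} x^N$, noting that the coefficients $\chi(M)_{ik}$ and $\chi(N)_{kj}$ are scalars in $k$ while the $x^M$ and $x^N$ lie in $A_n$. Because elements of $A_n \otimes 1$ commute with elements of $1 \otimes A_n$, bilinearity of the tensor product together with a harmless reordering of summations yields
\[ \sum_k a_{ik} \otimes a_{kj} = \sum_{M,N} \left( \sum_k \chi(M)_{ik} \chi(N)_{kj} \right) x^M \otimes x^N. \]
The inner sum over $k$ is precisely the $(i,j)$ entry of the ordinary matrix product $\chi(M)\chi(N)$, and reading off the $(i,j)$ entries of both sides gives the claimed identity of matrices.

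There is no real obstacle here beyond bookkeeping, and the entire proof should occupy just a few lines. The point of isolating this proposition is evidently strategic rather than technical: combining it with Proposition \ref{UnBigEquationProp} and the comodule identity $\Delta(a_{ij}) = \sum_k a_{ik}\otimes a_{kj}$ will let one equate coefficients of each monomial $x^M \otimes x^N$, producing the polynomial relations among the $\chi(M)$ that feed into the proof of Theorem \ref{TheMainTheoremSub}.
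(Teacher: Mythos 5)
Your proposal is correct and matches the paper's own proof essentially line for line: expand $a_{ik}$ and $a_{kj}$ via $\chi$, use bilinearity to interchange the sums, and identify $\sum_k \chi(M)_{ik}\chi(N)_{kj}$ as the $(i,j)$ entry of $\chi(M)\chi(N)$. Nothing is missing.
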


\begin{proof}
We have that $a_{ik} = \sum_M \chi(M)_{ik} x^M$, and $a_{kj} =
\sum_N \chi(N)_{kj} x^N$.  Compute:
\begin{equation*}
\begin{split}
\sum_k a_{ik} \otimes a_{kj} &= \sum_k \left(\sum_M \chi(M)_{ik}
x^M\right) \otimes \left(\sum_N \chi(N)_{kj} x^N\right) \\
&= \sum_{M,N} \left(\sum_k \chi(M)_{ik} \chi(N)_{kj}\right) x^M
\otimes x^N
\end{split}
\end{equation*}
Realizing that $\sum_k \chi(M)_{ik} \chi(N)_{kj}$ is nothing more
than the $(i,j)^{\text{th}}$ entry of the matrix product $\chi(M)
\chi(N)$, this completes the proof.

\end{proof}

Recalling equations \ref{comodEquation1} and \ref{comodEquation2},
by propositions \ref{UnBigEquationProp} and \ref{UnMiniProp} we
have

\begin{proposition}
The matrix formula $(a_{ij}) = \sum_M \chi(M) x^M$ is a
representation of $U_n$ if and only if
\begin{enumerate}
\item{$\chi(M) = 0$ for all but finitely many $M$}

\item{$\chi(0) = \text{Id}$}

\item{
\begin{equation}
\label{TheMainEquation}
\begin{split}
& \sum_{M,N}  \chi(M) \chi(N) x^M \otimes x^N \\
= \sum_M \chi(M) &\left[ \sum_{S_1 + \ldots + S_n = M} {S_1 +
\ldots + S_n \choose S_1, \ldots, S_n} \left( \prod_{1 \leq i < j
\leq n} x_{ij}^{L_{ij}} \right) \otimes \left( \prod_{1 \leq i < j
\leq n} x_{ij}^{R_{ij}} \right) \right] \\
\end{split}
\end{equation}
}
\end{enumerate}

In particular, the matrix product $\chi(M) \chi(N)$ must be
exactly the coefficient of the monomial tensor $x^M \otimes x^N$
on the right hand side of equation \ref{TheMainEquation}.
\end{proposition}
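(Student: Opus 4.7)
The plan is essentially to assemble the two preceding propositions together with the comodule axioms, so there is no hard step — this statement is a bookkeeping reformulation. I would proceed as follows.

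First, I would observe that a $d$-dimensional representation of $U_n$ on $V$ (with a fixed basis) is nothing more than a $k$-linear comodule map $\rho:V\to V\otimes A_n$ whose matrix formula in this basis is $(a_{ij})$. The very requirement that $\rho$ takes values in $V\otimes A_n$ (and not some completed object) forces the entries $a_{ij}$ to be polynomials, which is equivalent to saying $\chi(M)=0$ for all but finitely many $M$. This gives condition (1).

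Next I would translate the two comodule diagrams into their coefficientwise forms, namely equations (\ref{comodEquation1}) and (\ref{comodEquation2}). For the counit diagram: applying $\varepsilon$ to $(a_{ij})=\sum_M \chi(M)x^M$ and using $\varepsilon(x_{ij})=0$ for all $i<j$ kills every non-constant monomial, leaving only $\chi(0)$. Hence $\varepsilon(a_{ij})=\delta_{ij}$ is equivalent to $\chi(0)=\mathrm{Id}$, which is condition (2).

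For the coassociativity diagram, I would invoke the two propositions directly: Proposition \ref{UnBigEquationProp} rewrites the matrix $(\Delta(a_{ij}))$ exactly as the right-hand side of equation (\ref{TheMainEquation}), while Proposition \ref{UnMiniProp} rewrites $(\sum_k a_{ik}\otimes a_{kj})$ as the left-hand side. Thus (\ref{comodEquation1}) is equivalent to (\ref{TheMainEquation}), giving condition (3). The final ``in particular'' clause then follows immediately by comparing coefficients of the monomial tensor $x^M\otimes x^N$ on both sides of (\ref{TheMainEquation}): on the left it is $\chi(M)\chi(N)$ by Proposition \ref{UnMiniProp}, so this must match the corresponding coefficient on the right. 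Since every ingredient has already been established, I expect no obstacle; the only thing to be careful about is matching the finiteness condition (1) to the implicit requirement that the sum $\sum_M\chi(M)x^M$ actually define an element of $M_d(A_n)$ rather than a formal power series matrix.
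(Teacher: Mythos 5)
Your proposal is correct and matches the paper's treatment exactly: the paper simply deduces this proposition from Propositions \ref{UnBigEquationProp} and \ref{UnMiniProp} together with the coefficientwise comodule equations (\ref{comodEquation1}) and (\ref{comodEquation2}), which is precisely your argument. Your extra remarks on finiteness and on the counit killing all non-constant monomials are the right (and only) details to check.
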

The left hand side of equation \ref{TheMainEquation} above is a
sum over distinct monomial tensors, and so the coefficient of $x^M
\otimes x^N$ is exactly $\chi(M) \chi(N)$. The right hand side of
this equation, on the other hand, is \emph{not} a sum over
distinct monomial tensors, and for the most part it is a
non-trivial exercise figuring out exactly what the coefficient of
$x^M \otimes x^N$ is for arbitrary $M$ and $N$.  Nonetheless, for
the $M$ and $N$ we are interested in, we shall be able to compute
exactly what the coefficient of the monomial tensor $x^M \otimes
x^N$ is on the right hand side, and in so doing compute $\chi(M)
\chi(N)$, and in so doing prove theorem \ref{TheMainTheoremSub}.

As an example of how we will be using this equation, consider
again the case $n=4$, and consider the matrix product
$\chi(\varepsilon_{14}) \chi(\varepsilon_{13})$. We claim that it
is always equal to $\chi(\varepsilon_{14} + \varepsilon_{13})$. To
show this, consider the equation
\begin{equation*}
\begin{split}
& \quad \quad \quad
x_{12}^{\s{2}{1}{2}+\s{2}{1}{3}+\s{2}{1}{4}}x_{23}^{\s{2}{2}{3}+\s{2}{2}{4}}x_{34}^{\s{2}{3}{4}}
x_{13}^{\s{3}{1}{3} + \s{3}{1}{4}}
x_{24}^{\s{3}{2}{4}}x_{14}^{\s{4}{1}{4}} \\
& \quad \bigotimes \\
&\quad \quad \quad
x_{12}^{\s{1}{1}{2}}x_{23}^{\s{1}{2}{3}+\s{2}{1}{3}}x_{34}^{\s{1}{3}{4}+\s{2}{2}{4}+\s{3}{1}{4}}
x_{13}^{\s{1}{1}{3}} x_{24}^{\s{1}{2}{4}+\s{2}{1}{4}}
x_{14}^{\s{1}{1}{4}}\\
& \quad \quad \quad = x_{14} \otimes x_{13}\\
\end{split}
\end{equation*}
Since the variables $s_{ij}^k$ are demanded to be non-negative
integers, clearly there is only one collection of matrices
$S_1,S_2,S_3,S_4$ in non-negative integer entries that gives a
solution to this equation, namely $\s{4}{1}{4} = 1, \s{1}{1}{3} =
1$, and all other $\s{k}{i}{j} = 0$.  Thus the coefficient of
$x_{14} \otimes x_{13}$ on the right hand side of equation
\ref{TheMainEquation} is exactly
\begin{equation*}
\begin{split}
{S_1+S_2+S_3+S_4 \choose S_1,S_2,S_3,S_4} \chi(S_1 + S_2 +
S_3+S_4) &= {\varepsilon_{13} + 0 + 0 + \varepsilon_{14} \choose
\varepsilon_{13}, 0,0 ,\varepsilon_{14}} \chi(\varepsilon_{13} +
\varepsilon_{14}) \\
&= \chi(\varepsilon_{13} + \varepsilon_{14}) \\
\end{split}
\end{equation*}
The coefficient of $x_{14} \otimes x_{13}$ on the left hand side
of equation \ref{TheMainEquation} is of course
$\chi(\varepsilon_{14}) \chi(\varepsilon_{13})$, whence they are
equal.

\begin{lemma}
\label{LandRlemmaOld}
 Consider the variable expressions
\[ L_{ij} = \sum_{k=j}^n s_{ik}^{j-i+1} \hspace{1cm} R_{ij} =
\sum_{k=1}^i s_{kj}^{i-k+1} \]

\begin{enumerate}
\item{Each of the variables $s_{ij}^k$ occur at most once in any
of the $L_{ij}$, and the only variables that do not occur in any
of the $L_{ij}$ are those of the form $s_{ij}^1$ (i.e.~those
variables occurring in the matrix $S_1$)}

\item{Each of the variables $s_{ij}^k$ occur at most once in any
of the $R_{ij}$, and the only variables that do not occur in any
of the $R_{ij}$ are those of the form $s_{i,k}^{k-i+1}$, for $k>i$
(i.e.~those on the super-diagonal of $S_2$, on the
super-super-diagonal of $S_3$, $\ldots$, on the
$(1,n)^{\text{th}}$ entry of $S_n$)}

\item{The variables that occur in both of the expressions $L_{ij}$
and $R_{uv}$ are exactly
\begin{enumerate}
\item{$s_{i,v}^{j-i+1}$ if $j=u$}

\item{None otherwise}

\end{enumerate}
}
\end{enumerate}

\end{lemma}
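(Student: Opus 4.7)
The plan is to prove the lemma by direct index-chasing: for a given variable $s_{ab}^c$, I solve for the unique pair $(i,j)$ (if any) such that $s_{ab}^c$ appears as a term of $L_{ij}$, and independently for $R_{ij}$. Parts (1) and (2) then follow by reading off the resulting validity constraints, and part (3) follows by comparing the two parametrizations.

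For part (1), observe that $s_{ab}^c$ appears in $L_{ij} = \sum_{k=j}^n s_{ik}^{j-i+1}$ precisely when there exists $k$ with $j \leq k \leq n$ and $(i, k, j-i+1) = (a, b, c)$. This forces $i = a$, $k = b$, and $j = a + c - 1$, so the pair $(i,j)$ is uniquely determined by $(a,b,c)$, and within that $L_{ij}$ the variable is a single term (distinct terms correspond to distinct values of $k$). The validity conditions $1 \leq i < j \leq n$ and $j \leq k$ then reduce to $c \geq 2$ (for $i < j$) and to the defining inequality $b - a \geq c - 1$ (for $j \leq k \leq n$). Hence for $c \geq 2$ the variable appears in exactly one $L_{ij}$, while for $c = 1$ (the variables of $S_1$) it appears in none.

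For part (2), the analogous analysis applied to $R_{ij} = \sum_{k=1}^i s_{kj}^{i-k+1}$ yields the forced assignments $k = a$, $j = b$, and $i = a + c - 1$, again with uniqueness of $(i,j)$. The only validity condition that can fail is $i < j$, which becomes $b - a \geq c$. Since the variable exists only when $b - a \geq c - 1$, this condition fails precisely on the boundary $b - a = c - 1$. The resulting exceptional variables $s_{a, a+c-1}^c$ are, on rewriting $k = a+c-1$, exactly the variables $s_{i,k}^{k-i+1}$ for $k > i$ named in the lemma (the super-diagonal of $S_2$, the super-super-diagonal of $S_3$, and so on, ending with $s_{1,n}^n$ in $S_n$).

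For part (3), I combine the two descriptions. If $s_{ab}^c$ occurs in both $L_{ij}$ and $R_{uv}$, then part (1) forces $(i, j) = (a, a+c-1)$ while part (2) forces $(u, v) = (a+c-1, b)$; thus $j = u$ and the common variable must be $s_{i, v}^{j-i+1}$. Conversely, when $j = u$ this single variable is indeed present in both $L_{ij}$ and $R_{uv}$, and when $j \neq u$ the two forced identities $j = a+c-1 = u$ are incompatible, so no common variable exists. The argument involves no substantive obstacle; the only care needed is in tracking the strict and non-strict inequalities so as to correctly identify the boundary $b - a = c - 1$ in part (2) with the specific diagonals listed in the lemma statement.
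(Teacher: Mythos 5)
Your proposal is correct and follows essentially the same route as the paper's proof: solve for the unique triple $(i,j,k)$ forced by a given variable $s_{ab}^c$ in $L_{ij}$ and in $R_{ij}$, then read off which validity inequality can fail ($i<j$ giving the $c=1$ exclusion for the $L$'s and the boundary $b-a=c-1$ exclusion for the $R$'s), and compare the two parametrizations to get part (3).
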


\begin{proof}

$(1)$ For the variable $s_{uv}^w$ to occur in $L_{ij}$, we must
have a $k$ with $j \leq k \leq n$ such that $s_{ik}^{j-i+1} =
s_{uv}^w$. This forces $i=u,k=v$, and $w = j-i+1$, i.e.~$j =
w+u-1$.  Thus there is only one possible choice for $i,j$ and $k$.

Now suppose $w=1$.  Then there is no $L_{ij}$ in which $s_{uv}^w$
occurs, since this forces $j=u=i$, and of course $i$ is always
demanded to be less than $j$.  On the other hand, if $2 \leq w
\leq n$, set $i=u, k = v$, and $j = w+u-1$; we claim that
$s_{uv}^w$ occurs as the $k^{\text{th}}$ term of $L_{ij}$.  To
prove this, we need to verify that $1 \leq i < j \leq n$, and that
$j \leq k \leq n$. That $1 \leq i$ is obvious from $1 \leq u$.
 Obviously also $k \leq n$ since $v \leq n$.  Recall that, by assumption
of the form of the variable $s_{uv}^w$, $v-u \leq w -1$, i.e~$u
\leq v-w+1$. Then
\[j = w+u-1 \leq w + (v-w+1)-1 = v \leq n \]
and
\[ j \leq v = k \]
as required.

$(2)$ For the variable $s_{uv}^w$ to occur in $R_{ij}$, we must
have a $k$ with $1 \leq k \leq i$ such that $s_{kj}^{i-k+1} =
s_{uv}^w$. This forces $j=v,k=u$, and $w = i-k+1$, i.e.~$i =
w+u-1$.  Thus there is only one possible choice for $i,j$ and $k$.

Now suppose that $s_{uv}^w$ is of the form $s_{i,k}^{k-i+1}$, for
$k>i$, that is, so that $v-u=w-1$.  Then this forces
\[ j-k = v-u = w-1 =i-k+1-1 = i-k \]
so that $j=i$, which is impossible.  On the other hand, if
$s_{uv}^w$ is not of this form, we can assume that $v-u > w-1$.
Then set $i=w+u-1,j=v,k=u$.  We claim that $s_{uv}^w$ occurs as
the $k^{\text{th}}$ term of $R_{ij}$.  We must verify that $1 \leq
i < j \leq n$, and that $1 \leq k \leq i$.

That $ j \leq n$ is obvious from $v \leq n$, $1 \leq k$ since $1
\leq u$, and $1 \leq i$ since $1 \leq w + u -1$, i.e.~$2 \leq w +
u$.  $i < j$ is equivalent to $w + u -1 < v$, which is equivalent
to $v-u > w-1$, and $k \leq i$ is equivalent to $u \leq w+u-1$,
which is equivalent to $1 \leq w$.

(3) To see which variables occur in both $L_{ij}$ and $R_{uv}$, we
seek $k_0$ and $k_1$ with $j \leq k_0 \leq n$ and $1 \leq k_1 \leq
u$ such that
\[ s_{i,k_0}^{j-i+1} = s_{k_1,v}^{u-k_1+1} \]
which forces $i = k_1$, $v=k_0$, and $j-i+1 = u-k_1 + 1 = u-i+1$,
i.e.~$j=u$.  Thus we must have $j=u$ to have any variables in
common.  If this is the case, then $1 \leq i < j = u < v \leq n$,
whence we have $1 \leq k_1 < j = u < k_0 \leq n$, as required,
giving us the variable $s_{iv}^{j-i+1}$.

\end{proof}

We close this section with one more combinatorial lemma.

\begin{lemma}
\label{BigFatLemma} Let $Y = (y_{ij})$ be an $n \times n$ strictly
upper triangular matrix with zeroes on its top row, and let $Z =
(z_{ij})$ be an $n \times n$ strictly upper triangular matrix with
zeroes in all rows except perhaps the top, both with non-negative
integer entries. Then the system
\[ \left(\prod_{1 \leq i < j \leq n} x_{ij}^{L_{ij}} \right)
\otimes \left(\prod_{1 \leq i < j \leq n} x_{ij}^{R_{ij}} \right)
= x^Y \otimes x^Z
\]
has exactly one solution in the variables $s_{ij}^k$, namely
\begin{enumerate}
\item{$s_{1j}^1 = z_{1j}$ for all $2 \leq j \leq n$}

\item{$s_{ij}^{j-i+1} = y_{ij}$ for all $2 \leq i < j \leq n$}

\item{All other $s_{ij}^k = 0$}
\end{enumerate}

\begin{remark}
In the case of $n=5$, a useful heuristic to remember this solution
is
\[
\left(%
\begin{array}{ccccc}
  0 & s_{12}^1 & s_{13}^1  & s_{14}^1 & s_{15}^1 \\
   & 0 & s_{23}^2 & s_{24}^3 & s_{25}^4 \\
   &  & 0 & s_{34}^2 & s_{35}^3 \\
   &  &  & 0 & s_{45}^2 \\
   &  &  &  & 0 \\
\end{array}%
\right) =
\left(%
\begin{array}{ccccc}
  0 & z_{12} & z_{13}  & z_{14} & z_{15} \\
   & 0 & y_{23} & y_{24} & y_{25} \\
   &  & 0 & y_{34} & y_{35} \\
   &  &  & 0 & y_{45} \\
   &  &  &  & 0 \\
\end{array}%
\right) = Y + Z
\]
with any $s_{ij}^k$ not mentioned assumed to be zero.

\end{remark}

\begin{proof}
 The key fact is that the variables mentioned in
part (1) of the above do not occur in any of the $L_{ij}$, and
those given in part (2) do not occur in any of the $R_{ij}$ (see
lemma \ref{LandRlemmaOld}).

We prove first that the assignments given actually are a solution.
We need to verify that
\[ R_{1,j} = z_{1,j} \text{ for all } 2 \leq j \leq n \]
and
\[ L_{i,j} = y_{i,j} \text{ for all } 2 \leq i < j \leq n \]
with all other $L_{ij},R_{ij} = 0$. Consider
\[ R_{ij} = \sum_{k=1}^i s_{kj}^{i-k+1} = s_{1,j}^i + s_{2,j}^{i-1} \ldots + s_{ij}^1 \]
We see firstly that $R_{1,j} = s_{1,j}^1 = z_{1,j}$.  By lemma
\ref{LandRlemmaOld}, no variable of the form given in part (2) of
the proposition occur in any $R_{ij}$, and if $i>1$, no variable
of the form given in part (1) may occur either.  This gives
$R_{ij} = 0$ for $i>1$, as required.

Consider now
\[ L_{ij} = \sum_{k=j}^n s_{ik}^{j-i+1} = s_{ij}^{j-i+1} +
s_{i,j+1}^{j-i+1} + \ldots + s_{in}^{j-i+1} \]  Note firstly that
no variables of the form given part (1) occur in any $L_{ij}$
(lemma \ref{LandRlemmaOld}). If $i=1$, then we have
\[ L_{1j} = s_{1,j}^{j} +
s_{1,j+1}^{j} + \ldots + s_{1,n}^{j} \] In order for a variable of
the form given in (2) to occur in this expression, we must have
$j=1+j+1-1$, which is absurd; thus $L_{1j} = 0$.  In case $i>1$,
we see there is exactly one term of the form given in (2)
occurring, namely the first term, $s_{i,j}^{j-i+1}$, which is
equal to $y_{ij}$ by fiat.  This gives $L_{ij} = y_{ij}$ for
$i>1$, as required.

We now argue that this is the only possible solution.  First, the
equations $R_{1,j} = s_{1,j}^1 = z_{1,j}$ force the assignments
given in part (1).  Second, if $i \geq 2$, then the variable
$s_{ij}^{j-i+1}$ occurs in $L_{ij}$, and this is the \emph{only}
variable of this form occurring in $L_{ij}$.  Third, if $i \geq
2$, then every other variable occurring in $L_{ij}$, not being of
the form given in (2), occurs in some $R_{uv}$ (lemma
\ref{LandRlemmaOld}), necessarily with $i=u>1$; and since $R_{uv}
= 0$ for all $u>1$, this makes them zero.  Thus, for $i \geq 2$,
$L_{ij} = y_{ij}$ has only one possible non-zero variable, namely
$s_{ij}^{j-i+1}$; this forces $s_{ij}^{j-i+1} = y_{ij}$ for all $i
\geq 2$.

Finally, let $s_{ij}^k$ be any variable not of the form given in
either (1) or (2).  Certainly it occurs somewhere among the
$L_{ij}$ or $R_{ij}$.  If this happens to be any of the non-zero
$L's$ or $R's$, we've already argued that $s_{ij}^k$ must be zero;
and if it is \emph{not} among these $L's$ or $R's$, then that
particular $L$ or $R$ must itself be zero, forcing $s_{ij}^k = 0$.
This completes the proof.
\end{proof}

\end{lemma}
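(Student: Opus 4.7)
The plan is to prove existence and uniqueness separately, in each case exploiting the incidence pattern recorded in Lemma \ref{LandRlemmaOld}. The hypotheses on $Y$ and $Z$ say exactly that the nontrivial equations fall into four groups: $R_{1v} = z_{1v}$, $R_{uv} = 0$ for $u \geq 2$, $L_{1v} = 0$, and $L_{uv} = y_{uv}$ for $u \geq 2$. The two families of variables named in the lemma are perfectly suited to this split: by part (1) of Lemma \ref{LandRlemmaOld}, the variables $s_{1v}^1$ appear in no $L$ at all, while by part (2), the super-diagonal variables $s_{uv}^{v-u+1}$ (for $u \geq 2$) appear in no $R$ at all. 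This orthogonality is the structural observation that drives the whole argument.

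For existence, I would substitute the prescribed assignment and verify each of the four groups of equations in turn. The key simplifications are: $R_{1v}$ collapses to the single term $s_{1v}^1 = z_{1v}$; $L_{1v}$ involves only variables $s_{1k}^{v}$ with $k \geq v \geq 2$, none of which are in the assignment (they are neither of the form $s_{1j}^{1}$ nor super-diagonal); and $L_{uv}$ for $u \geq 2$ contains $s_{uv}^{v-u+1} = y_{uv}$ as its $k=v$ summand, while its remaining summands $s_{uk}^{v-u+1}$ with $k > v$ are neither of the form in (1) nor super-diagonal, hence zero under our assignment. Checking $R_{uv} = 0$ for $u \geq 2$ proceeds in the same spirit: each summand falls outside the two named families and is thus zero.

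For uniqueness, I plan to cascade the constraints. First, $R_{1v} = z_{1v}$, having the single summand $s_{1v}^1$, pins $s_{1v}^1 = z_{1v}$. Second, non-negativity combined with $R_{uv} = 0$ for $u \geq 2$ forces every variable occurring in any such $R_{uv}$ to vanish; by part (2) of Lemma \ref{LandRlemmaOld}, the only variables that survive this sweep are the super-diagonal ones $s_{ij}^{j-i+1}$, together with the already-fixed $s_{1v}^1$. Finally, among these survivors, the equations $L_{1v} = 0$ kill $s_{1v}^v$ for $v \geq 2$ (its unique surviving summand in $L_{1v}$), and the equations $L_{uv} = y_{uv}$ for $u \geq 2$ reduce to $s_{uv}^{v-u+1} = y_{uv}$ once the non-super-diagonal summands have already been zeroed out by the $R$-sweep.

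The main obstacle is purely bookkeeping: I must check that the partition of all variables $s_{ij}^k$ into (a) pinned by $R_{1v}$, (b) forced to zero by some $R_{uv}$ with $u \geq 2$, (c) killed by $L_{1v} = 0$, and (d) pinned by $L_{uv} = y_{uv}$ with $u \geq 2$ is both exhaustive and disjoint. This reduces to a case analysis over the shapes of $(u,v,w)$, reading off the membership criteria from Lemma \ref{LandRlemmaOld}; once that is done, both existence and uniqueness drop out with essentially no calculation.
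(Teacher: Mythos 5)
Your proposal is correct and follows essentially the same route as the paper: both rest on the observation from Lemma \ref{LandRlemmaOld} that the $s_{1j}^1$ appear in no $L$ and the extreme-diagonal variables $s_{ij}^{j-i+1}$ appear in no $R$, verify existence by direct substitution, and obtain uniqueness by using $R_{1v}$ to pin $s_{1v}^1$, the vanishing $R_{uv}$ ($u\geq 2$) together with non-negativity to annihilate everything outside the two named families, and the $L$-equations to handle what remains. The only difference is organizational (you run the $R$-sweep before examining the $L$'s, while the paper interleaves them), which does not change the argument.
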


\section{The Main Theorem: Necessity}
\label{necessitySection}

We can now put equation \ref{TheMainEquation} to work in proving
theorem \ref{TheMainTheoremSub}. Recall that $\varepsilon_{ij}$ is
the $n \times n$ matrix with a $1$ in its $(i,j)^{\text{th}}$
entry, zeroes elsewhere. Let $M$ be an arbitrary $n \times n$
strictly upper triangular matrix with non-negative integer
entries.  To begin, we need a factorization for $\chi(M)$ in terms
of those matrices of the form $\chi(r \varepsilon_{ij})$.

\begin{lemma}
\label{UnDecompositionLemma} If $M=(m_{ij})$, then
\[ \chi(M) = \prod_{i=n-1}^1 \prod_{j=i+1}^n \chi(m_{ij}
\varepsilon_{ij}) \]
\end{lemma}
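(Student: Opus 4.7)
The plan is to prove this by induction on $n$. The base case $n=2$ is immediate: a strictly upper triangular $2 \times 2$ matrix has only the single nonzero entry $m_{12}$, so $M = m_{12}\varepsilon_{12}$ and the claim is tautological.

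For the inductive step I would decompose $M = Y + Z$, where $Z = \sum_{j=2}^n m_{1j}\varepsilon_{1j}$ captures the top row of $M$ and $Y = M - Z$ is the ``body'' (with vanishing top row). The argument then has two separate factorizations to prove: first, peeling off the top row to obtain $\chi(M) = \chi(Y)\chi(Z)$; and second, decomposing the top row as $\chi(Z) = \prod_{j=2}^n \chi(m_{1j}\varepsilon_{1j})$. Together with the inductive hypothesis applied to $\chi(Y)$ inside the $U_{n-1}$ subgroup fixing the first basis vector (noting that restricting to this subgroup leaves every value $\chi(M')$ unchanged whenever $M'$ is supported off the top row), these assemble into the claimed product, with the rows appearing bottom-to-top as we read the factors left-to-right.

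The first factorization is handled directly by Lemma \ref{BigFatLemma}. Extracting the coefficient of $x^Y \otimes x^Z$ from both sides of equation \ref{TheMainEquation} produces $\chi(Y)\chi(Z)$ on the left, while on the right, the uniqueness asserted by Lemma \ref{BigFatLemma} leaves a single contributing $(S_1, \ldots, S_n)$, satisfying $S_1 + \ldots + S_n = M$ with each $m_{ij}$ assigned entirely to one $S_k$, making every entry-wise multinomial coefficient equal $1$. Hence the right-hand side contribution is exactly $\chi(M)$, giving $\chi(M) = \chi(Y)\chi(Z)$.

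The second factorization requires a fresh coefficient analysis of equation \ref{TheMainEquation}, and is where I expect the main technical obstacle, since Lemma \ref{BigFatLemma} is asymmetric (top row vs.\ body) and does not split within a single row. The sub-claim is: for top-row matrices $A, B$ with mutually disjoint support, $\chi(A)\chi(B) = \chi(A+B)$. To prove this I would extract the coefficient of $x^A \otimes x^B$ from equation \ref{TheMainEquation}, using Lemma \ref{LandRlemmaOld} as bookkeeping. The conditions $L_{ij} = 0$ and $R_{ij} = 0$ for every $i \geq 2$ force $s_{ij}^k = 0$ whenever $i \geq 2$ and additionally $s_{1j}^k = 0$ for $2 \leq k < j$, leaving only $s_{1j}^1 = B_{1j}$ (from $R_{1j} = B_{1j}$) and $s_{1j}^j = A_{1j}$ (from $L_{1j} = A_{1j}$). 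Disjoint support then makes every multinomial $\binom{A_{1j}+B_{1j}}{A_{1j},B_{1j}}$ equal $1$, so the right-hand side contribution equals $\chi(A+B)$. Iterating this sub-claim with $A = \sum_{k=2}^{j-1} m_{1k}\varepsilon_{1k}$ and $B = m_{1j}\varepsilon_{1j}$ for $j = 3, \ldots, n$ yields the top-row factorization, and combining with the first factorization and the inductive hypothesis completes the proof.
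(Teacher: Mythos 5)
Your proposal is correct, and its skeleton coincides with the paper's: induction on $n$, the same splitting $M = Y + Z$ into top row and body, the inductive hypothesis applied to $Y$ via the lower-right copy of $U_{n-1}$, and Lemma \ref{BigFatLemma} applied to the coefficient of $x^Y \otimes x^Z$ in equation \ref{TheMainEquation} to get $\chi(M) = \chi(Y)\chi(Z)$. The one place you diverge is the top-row factorization $\chi(Z) = \prod_{j=2}^n \chi(m_{1j}\varepsilon_{1j})$: the paper simply cites the known representation theory of $G_a^{n-1}$ (the result of \cite{SFB}, or theorem 12.3.6 of \cite{MyDissertation}) through the top-row embedding, whereas you prove the needed identity $\chi(A)\chi(B) = \chi(A+B)$ for disjointly supported top-row matrices directly from equation \ref{TheMainEquation}, using Lemma \ref{LandRlemmaOld} to pin down the unique solution $s_{1j}^1 = B_{1j}$, $s_{1j}^j = A_{1j}$, all other variables zero. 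That bookkeeping checks out (the variables $s_{1k}^j$ with $k > j \geq 2$ are killed by $R_{jk} = 0$, those with $u \geq 2$ by the vanishing of the corresponding $L$'s and $R$'s, and disjoint support forces each entry-wise multinomial to be $1$), so your version is a self-contained replacement for the external citation, at the cost of one more coefficient extraction; the paper's version is shorter but leans on machinery for products of the additive group developed elsewhere.
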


\begin{remark}
The notation ``$\prod_{i=n-1}^1$'' is not a typo; we are listing
the factors in reverse order for convenience.  For example, in the
case of $n=4$, we are saying that
\[ \chi(M) = \chi(m_{34} \varepsilon_{34}) \chi(m_{23} \varepsilon_{23}) \chi(m_{24} \varepsilon_{24})
\chi(m_{12} \varepsilon_{12}) \chi(m_{13} \varepsilon_{13})
\chi(m_{14} \varepsilon_{14})\]
\end{remark}

\begin{proof}
We proceed by induction on $n$.  If $n=2$ the above equation is
$\chi(M) = \chi(m_{12} \varepsilon_{12}) = \chi(M)$, which is
obvious.  Let $Y$ and $Z$ be the $n \times n$ matrices
\[Y =
\left(%
\begin{array}{cccccc}
  0 & 0 & 0 & 0 & \dots & 0 \\
   & 0 & m_{23} & m_{24} & \dots & m_{2n} \\
   &  & 0 & m_{34} & \dots & m_{3n} \\
   &  &  & \ddots & \ddots & \vdots \\
   &  &  &  & 0 & m_{n-1,n} \\
   &  &  &  &  & 0 \\
\end{array}%
\right), \hspace{.2cm} Z =
\left(%
\begin{array}{cccccc}
  0 & m_{12} & m_{13} & m_{14} & \dots & m_{1n} \\
   & 0 & 0 & 0 & \dots & 0 \\
   &  & 0 & 0 & \dots & 0 \\
   &  &  & \ddots & \ddots & \vdots \\
   &  &  &  & 0 & 0 \\
   &  &  &  &  & 0 \\
\end{array}%
\right)
\]
i.e~the $n \times n$ matrices with the top row of $M$ deleted, and
\emph{all but} the top row deleted. Using the second embedding of
$U_{n-1}$ into $U_n$ described on page \pageref{embeddings}, we
conclude by induction that
\begin{equation*}
\begin{split}
\chi(Y) &= \chi(m_{n-1,n} \varepsilon_{n-1,n}) \chi(m_{n-2,n-1}
\varepsilon_{n-2,n-1}) \chi(m_{n-2,n} \varepsilon_{n-2,n}) \ldots
\chi(m_{23} \varepsilon_{23}) \ldots \chi(m_{2n} \varepsilon_{2n})
\\
& = \prod_{i=n-1}^2 \prod_{j=i+1}^n \chi(m_{ij} \varepsilon_{ij})
\end{split}
\end{equation*}
And by using the first embedding $G_a^{n-1}$ into $U_n$  described
on page \pageref{embeddings}, we have
\begin{equation*}
\begin{split}
 \chi(Z) &= \chi(m_{12} \varepsilon_{12}) \ldots \chi(m_{1n}
\varepsilon_{1n}) \\
 &=\prod_{j=2}^n \chi(m_{1j} \varepsilon_{1j})
\end{split}
\end{equation*}
(See theorem 12.3.6 of \cite{MyDissertation} for an account of the
representation theory of direct products of the Additive group.)

Now consider the matrix product $\chi(Y) \chi(Z)$; we claim it is
equal to $\chi(Y+Z) = \chi(M)$.  Using equation
\ref{TheMainEquation}, we seek solutions to the system
\[ \left(\prod_{1 \leq i < j \leq n} x_{ij}^{L_{ij}} \right)
\otimes \left(\prod_{1 \leq i < j \leq n} x_{ij}^{R_{ij}} \right)
= x^Y \otimes x^Z
\]
By lemma \ref{BigFatLemma} this system has exactly one solution,
and we leave it to the reader that this solution gives
\[{S_1 + \ldots + S_n \choose S_1, \ldots, S_n} = 1\]
and
\[ S_1 + \ldots + S_n = M \]
This gives
\begin{equation*}
\begin{split}
 \chi(M) &= \chi(Y) \chi(Z) \\
 &= \left(\prod_{i=n-1}^2 \prod_{j=i+1}^n \chi(m_{ij}
 \varepsilon_{ij}\right)\left(\prod_{j=2}^n \chi(m_{1j} \varepsilon_{1j}
 \right)\\
 &= \prod_{i=n-1}^1 \prod_{j=i+1}^n \chi(m_{ij}
\varepsilon_{ij})
\end{split}
\end{equation*}
This completes the proof.

\end{proof}

For any $1 \leq i < j \leq n$, those matrices of the form $1 +
x_{ij}\varepsilon_{ij}$ form a subgroup of $U_n$ isomorphic to the
Additive group $G_a$, and we shall use this fact repeatedly.  We
shall need the following concerning the representation theory of
the Additive group $G_a$ over prime characteristic fields.  In
what follows we shall identify $G_a$ as $U_2$, i.e.~the collection
of all matrices of the form
\[
\left(%
\begin{array}{cc}
  1 & x \\
  0 & 1 \\
\end{array}%
\right)
\]

The following proposition is originally due to A Suslin, E M
Friedlander and C P Bendel, 1997, and can be found in the proof of
proposition 1.2 of \cite{SFB}.  However, our notation differs
markedly from theirs, and so the reader may with to consult
section 12.3 of \cite{MyDissertation} instead.

\begin{proposition}
\label{GaProp} Let $k$ be a field of characteristic $p>0$, and let
$(a_{ij}) = \sum_{n \in \mathbb{N}} \chi(n \varepsilon_{12}) x^n$
be any representation of $G_a$ over $k$. Set $X_0 =
\chi(\varepsilon_{12}), X_1 = \chi(p\varepsilon_{12}), \ldots, X_m
= \chi(p^m \varepsilon_{12})$, with $X_i = 0$ for $i > m$.
\begin{enumerate}
\begin{item}
$(a_{ij})$ can be written as
\[ e^{xX_0}e^{x^pX_1} \ldots e^{x^{p^m} X_m} \]
The $X_i$ all commute and are nilpotent of order no greater than
$p$. Further, any finite collection of such matrices defines a
representation according to this formula.
\end{item}
\begin{item}
Let $r \in \mathbb{N}$, and write $r = r_0 + r_1 p + \ldots + r_m
p^m$ in $p$-ary notation.  Then
\[ \chi(r \varepsilon_{12}) = \Gamma(r)^{-1}
\chi(\varepsilon_{12})^{r_0} \chi(p \varepsilon_{12})^{r_1} \ldots
\chi(p^m \varepsilon_{12})^{r_m} \] where $\Gamma(r)
\stackrel{\text{\emph{def}}}{=} r_0!r_1! \ldots r_m!$.

\end{item}

\end{enumerate}

\end{proposition}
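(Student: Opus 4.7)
The plan is to reduce the entire proposition to a single multiplicative identity. For $G_a = U_2$ the Hopf algebra is $k[x]$ with $\Delta(x) = 1 \otimes x + x \otimes 1$, so $\Delta(x^r) = \sum_k {r \choose k}\, x^k \otimes x^{r-k}$. Equating coefficients of $x^m \otimes x^n$ in equation \ref{TheMainEquation} (this is the trivial $n=2$ specialization of propositions \ref{UnBigEquationProp} and \ref{UnMiniProp}) produces the engine I will use everywhere:
\[
\chi(m\varepsilon_{12})\, \chi(n\varepsilon_{12}) = {m+n \choose m}\, \chi((m+n)\varepsilon_{12}).
\]
Abbreviate $\chi_r := \chi(r\varepsilon_{12})$.

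Next I would prove part (2) by iterating the identity and invoking Lucas's theorem twice. By induction on $k$ for $1 \leq k < p$, the relation gives $\chi_{p^i}^k = k!\, \chi_{k p^i}$, because ${(j+1)p^i \choose p^i} \equiv j+1 \pmod p$ by Lucas (the base-$p$ digits vanish except at position $i$, where one reads ${j+1 \choose 1}$). In particular $\chi_{p^i}^{r_i} = r_i!\, \chi_{r_i p^i}$. Secondly, $\chi_{r_0 + \cdots + r_{j-1}p^{j-1}}\, \chi_{r_j p^j} = \chi_{r_0 + \cdots + r_j p^j}$ for each $j$, since the relevant binomial coefficient is $\equiv 1 \pmod p$ by Lucas (the base-$p$ supports of the two arguments are disjoint). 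Concatenating these and dividing by $\Gamma(r) = r_0!\cdots r_m!$ (invertible because each $r_i < p$) delivers the formula of part (2).

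Part (1) now follows swiftly. Commutativity: $X_i X_j = {p^i + p^j \choose p^i}\, \chi_{p^i + p^j} = \chi_{p^i + p^j}$ by Lucas, which is visibly symmetric in $i,j$. Nilpotency: $X_i^p = X_i \cdot X_i^{p-1} = (p-1)!\, \chi_{p^i} \chi_{(p-1)p^i} = (p-1)!\, {p^{i+1} \choose p^i}\, \chi_{p^{i+1}} = 0$, since the position-$i$ digit of $p^{i+1}$ is zero while that of $p^i$ is one, giving a ${0 \choose 1}$ factor in Lucas. Because only finitely many $\chi_r$ are nonzero, pick $m$ maximal with $X_m \neq 0$; by part (2), $\chi_r = 0$ for $r \geq p^{m+1}$ since a higher digit of $r$ would pull in $X_j = 0$ for some $j > m$. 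Expanding $e^{xX_0} e^{x^p X_1} \cdots e^{x^{p^m}X_m}$ using commutativity and $X_i^p = 0$, only tuples $(r_0,\ldots,r_m) \in \{0,\ldots,p-1\}^{m+1}$ contribute, and by uniqueness of base-$p$ expansions the product collects into $\sum_r \Gamma(r)^{-1} X_0^{r_0}\cdots X_m^{r_m}\, x^r = \sum_r \chi_r x^r = (a_{ij})$, proving the first half of (1).

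For the converse half of (1), given any commuting $X_0,\ldots,X_m$ with $X_i^p = 0$, define $\chi_r$ by the formula of part (2) (with $X_j = 0$ for $j > m$). It suffices to verify $\chi_s \chi_t = {s+t \choose s} \chi_{s+t}$. Commutativity of the $X_i$ gives $\chi_s \chi_t = \Gamma(s)^{-1}\Gamma(t)^{-1}\prod_i X_i^{s_i + t_i}$, and I split into two cases by whether the base-$p$ addition of $s$ and $t$ produces any carries. With no carries, Lucas factors ${s+t \choose s} = \prod_i {s_i + t_i \choose s_i}$ and a one-line identity ${s_i + t_i \choose s_i}/(s_i+t_i)! = 1/(s_i! t_i!)$ matches both sides. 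With at least one carry, Kummer's theorem makes ${s+t \choose s} \equiv 0 \pmod p$, and at the first carry position $s_i + t_i \geq p$ forces $X_i^{s_i + t_i} = 0$, so both sides vanish. The only real obstacle throughout is clean bookkeeping of base-$p$ digits; once Lucas and Kummer are committed to, there is essentially no algebra left to do.
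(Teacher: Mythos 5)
Your argument is correct. Note, however, that the paper does not actually prove this proposition: it is quoted from the proof of Proposition 1.2 of \cite{SFB} (and section 12.3 of \cite{MyDissertation}), so there is no in-paper proof to compare against. What you have written is a legitimate self-contained derivation, and it is exactly in the spirit of both the cited source and of the paper's own $U_n$ machinery: your ``engine'' identity $\chi(m\varepsilon_{12})\chi(n\varepsilon_{12}) = {m+n \choose m}\chi((m+n)\varepsilon_{12})$ is precisely the $n=2$ specialization of equation \ref{TheMainEquation} (via Propositions \ref{UnBigEquationProp} and \ref{UnMiniProp}), and the rest is digit bookkeeping with Lucas and Kummer. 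All the delicate points check out: the induction $\chi(p^i\varepsilon_{12})^k = k!\,\chi(kp^i\varepsilon_{12})$ for $k<p$, the vanishing $X_i^p = (p-1)!\,{p^{i+1}\choose p^i}\chi(p^{i+1}\varepsilon_{12}) = 0$, the observation that any $r\geq p^{m+1}$ has a nonzero digit in position $>m$ and hence $\chi(r\varepsilon_{12})=0$, and in the converse direction the fact that at the \emph{first} carry position there is no incoming carry, so $s_i+t_i\geq p$ kills $X_i^{s_i+t_i}$ on the left while Kummer kills ${s+t\choose s}$ on the right. The only things worth adding explicitly are the two remaining comodule axioms in the converse, namely $\chi(0)=\mathrm{Id}$ (immediate from your formula) and that only finitely many $\chi(r\varepsilon_{12})$ are nonzero (immediate since $X_j=0$ for $j>m$); with those one sentence each, the proof is complete.
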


\begin{lemma}
\label{UntoGaLemma} Let $k$ be a field of characteristic $p>0$,
$(a_{ij}) = \sum_M \chi(M) x^M$ a representation of $U_n$ over
$k$.  For $r \in \mathbb{N}$, write $r = r_0 + r_1 p + \ldots +
r_m p^m$ in $p$-ary notation. Then for any $1 \leq i < j \leq n$
and $r \in \mathbb{N}$,
\[ \chi(r \varepsilon_{ij}) = \Gamma(r)^{-1}
\chi(\varepsilon_{ij})^{r_0} \chi(p \varepsilon_{ij})^{r_1} \ldots
\chi(p^m \varepsilon_{ij})^{r_m} \] where $\Gamma(r)
\stackrel{\text{def}}{=} r_0! r_1! \ldots r_m!$.  Further, all of
the factors of this product commute, and all of the $\chi(p^m
\varepsilon_{ij})$ are nilpotent of order no greater than $p$.
\end{lemma}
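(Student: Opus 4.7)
The plan is to reduce this to Proposition \ref{GaProp} by restricting the representation of $U_n$ to a copy of $G_a$. Fix $1 \leq i < j \leq n$ and consider the subgroup $H_{ij} = \{1 + x_{ij}\varepsilon_{ij} : x_{ij} \in k\} \subseteq U_n$, which is isomorphic to $G_a$ via $x_{ij} \mapsto 1 + x_{ij}\varepsilon_{ij}$. Restricting the representation $(a_{ij}) = \sum_M \chi(M) x^M$ to $H_{ij}$ amounts, at the level of comodule structure, to post-composing with the Hopf algebra map $A_n \to k[x_{ij}]$ that sends $x_{kl} \mapsto 0$ for $(k,l) \neq (i,j)$.

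Under this substitution, the only monomials $x^M$ that survive are those with $M = r\varepsilon_{ij}$ for some $r \geq 0$, so the restricted matrix formula is
\[
\sum_{r \geq 0} \chi(r\varepsilon_{ij}) \, x_{ij}^r,
\]
which is a $d$-dimensional representation of $G_a$ over $k$. First I would verify this formally by checking that setting the other variables to zero does commute with the comodule structure, using the fact that the embedding $H_{ij} \hookrightarrow U_n$ is a group scheme morphism; equivalently, one observes from the coproduct $\Delta(x_{ij}) = 1 \otimes x_{ij} + \sum_{k=i+1}^{j-1} x_{ik} \otimes x_{kj} + x_{ij} \otimes 1$ that the extra cross-terms drop out once the non-$(i,j)$ variables are killed on both sides.

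Next I would apply Proposition \ref{GaProp} directly to this restricted $G_a$-representation, identifying the matrices $X_l$ there with $\chi(p^l \varepsilon_{ij})$ in our setting. Part (2) of that proposition then yields the claimed $p$-ary factorization
\[
\chi(r\varepsilon_{ij}) = \Gamma(r)^{-1} \chi(\varepsilon_{ij})^{r_0} \chi(p\varepsilon_{ij})^{r_1} \cdots \chi(p^m \varepsilon_{ij})^{r_m},
\]
while part (1) supplies the remaining two assertions: the factors $\chi(p^l \varepsilon_{ij})$ pairwise commute and each is nilpotent of order at most $p$.

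There is no real obstacle here; the only thing to be careful about is the bookkeeping in the first step, namely checking that restriction along a subgroup inclusion does send the coefficient function $M \mapsto \chi(M)$ on $U_n$ to the analogous coefficient function $r \mapsto \chi(r\varepsilon_{ij})$ on $G_a$. Once that identification is clean, the lemma is an immediate translation of Proposition \ref{GaProp}.
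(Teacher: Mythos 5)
Your proposal is correct and is essentially identical to the paper's own argument: restrict to the subgroup $\{1+x_{ij}\varepsilon_{ij}\}\cong G_a$ via the Hopf algebra map killing the other variables, identify the restricted coefficient matrices with $\chi(r\varepsilon_{ij})$, and apply Proposition \ref{GaProp}. The extra bookkeeping you flag about the restriction is exactly the content the paper leaves implicit.
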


\begin{proof}
For fixed $i$ and $j$, the collection of those matrices of $U_n$
the form $1+x_{ij} \varepsilon_{ij}$ form a subgroup of $U_n$
isomorphic to the additive group $G_a$.  This embedding is given
by the Hopf algebra map $A_n \rightarrow k[x]$ which sends
$x_{ij}$ to $x$ and all other $x_{rs}$ to zero.  Apply proposition
\ref{GaProp}.

\end{proof}

The next lemma, while simple, is the crucial fact which allows us
to prove the main theorem of this paper.  It also illustrates why
we do \emph{not} suspect a result analogous to our main theorem to
hold for non-unipotent algebraic groups (i.e.~groups with a
non-nilpotent Lie algebra).

\begin{lemma}
\label{carryingLemma} Let $k$ be a field of characteristic $p>0$,
$(a_{ij}) = \sum_M \chi(M) x^M$ a $d$-dimensional representation
of $U_n$ over $k$, and suppose that $p \geq 2d$. Let $r,s \in
\mathbb{N}$, and suppose that the sum $r + s$ carries modulo $p$.
Then for any $i,j,u,v$, at least one of $\chi(r \varepsilon_{ij})$
or $\chi(s \varepsilon_{uv})$ must be zero.
\end{lemma}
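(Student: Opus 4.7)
The plan is to restrict attention to the two Additive subgroups $\{1 + x\varepsilon_{ij} : x \in k\}$ and $\{1 + x\varepsilon_{uv} : x \in k\}$ of $U_n$ separately, and then exploit the fact that the digits of $r$ (resp.\ $s$) in base $p$ must contain at least one digit exceeding the dimension $d$ whenever carrying occurs.

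First I would apply Lemma \ref{UntoGaLemma} to each of $\chi(r\varepsilon_{ij})$ and $\chi(s\varepsilon_{uv})$. Writing $r = r_0 + r_1 p + \ldots + r_m p^m$ and $s = s_0 + s_1 p + \ldots + s_{m'} p^{m'}$ in $p$-ary notation, this yields
\[ \chi(r\varepsilon_{ij}) = \Gamma(r)^{-1} X_0^{r_0} X_1^{r_1} \ldots X_m^{r_m}, \qquad \chi(s\varepsilon_{uv}) = \Gamma(s)^{-1} Y_0^{s_0} Y_1^{s_1} \ldots Y_{m'}^{s_{m'}}, \]
where $X_l = \chi(p^l \varepsilon_{ij})$ and $Y_l = \chi(p^l \varepsilon_{uv})$ are $d \times d$ nilpotent matrices, and the $X_l$ (resp.\ $Y_l$) pairwise commute.

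Second, since $r + s$ carries modulo $p$, by definition there is some digit position $l$ at which $r_l + s_l \geq p$. Here is the crucial step, using the hypothesis $p \geq 2d$: from $r_l + s_l \geq p \geq 2d$, at least one of $r_l, s_l$ must be $\geq d$. Without loss of generality assume $r_l \geq d$.

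Third, I would invoke nilpotency: any $d \times d$ nilpotent matrix satisfies $X_l^d = 0$, so $X_l^{r_l} = 0$. Because the matrices $X_0, \ldots, X_m$ commute, this zero factor annihilates the entire product, giving $\chi(r\varepsilon_{ij}) = 0$. There is no real obstacle here; the only subtlety is noticing that the carrying condition $r_l + s_l \geq p$ combined with $p \geq 2d$ forces one of the digits to be at least $d$, which is exactly the threshold at which nilpotency on a $d$-dimensional space kicks in. This is precisely the quantitative reason why the hypothesis $p \geq 2d$ appears in the main theorem.
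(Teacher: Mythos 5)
Your proof is correct and follows essentially the same route as the paper: both factor $\chi(r\varepsilon_{ij})$ via Lemma \ref{UntoGaLemma}, locate the carrying digit position, and observe that $p \geq 2d$ forces one digit to reach the nilpotency threshold $d$ (the paper phrases this as one digit being $\geq p/2 \geq d$), killing the corresponding factor and hence the whole product.
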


\begin{proof}
Write $r = r_0 + r_1 p + \ldots + r_m p^m$ in p-ary notation,
similarly for $s$, and since $r+s$ carries, let $r_t + s_t \geq
p$.  Then at least one of $r_t$ or $s_t$ is greater than or equal
to $p/2$, say $r_t$.  By lemma \ref{UntoGaLemma} write
\[ \chi(r \varepsilon_{ij}) = \Gamma(r)^{-1}
\chi(\varepsilon_{ij})^{r_0} \chi(p \varepsilon_{ij})^{r_1} \ldots
\chi(p^t \varepsilon_{ij})^{r_t} \ldots \chi(p^m
\varepsilon_{ij})^{r_m}
\]
Then $\chi(r \varepsilon_{ij})$ is zero, since $\chi(p^t
\varepsilon_{ij})^{r_t}$ is zero, since $\chi(p^t
\varepsilon_{ij})$ is nilpotent of order no greater than $d \leq
p/2 \leq r_t$.

\end{proof}

We are now ready to prove theorem \ref{TheMainTheoremSub}. As
stated in the introduction on page \pageref{embeddings}, the four
embeddings mentioned there give us a great deal for free; all that
needs to be checked is the bracket $[\chi(p^m \varepsilon_{ij}),
\chi(p^n \varepsilon_{tu})]$ when $(i,j)$ is in the top row and
$(t,u)$ in the right-most column, or when $(i,j) = (1,n)$ and
$(t,u)$ is in neither the top row nor the right-most column. We
break it down into cases.

Recall that the Lie algebra of $U_n$, which we identify as
$\varepsilon_{ij}, 1 \leq i < j \leq n$, has the following Lie
bracket:
\[ [\varepsilon_{rs},\varepsilon_{tu}] = \left\{
\begin{array}{cc}
  \varepsilon_{ru} & \hspace{.3cm}\text{ if } s = t\\
  -\varepsilon_{ts} & \hspace{.3cm}\text{ if } r=u \\
  0 & \hspace{.3cm}\text{  otherwise
  }\\
\end{array}
\right.
\]

For the rest of this section we assume that $k$ is a field of
characteristic $p$, that $p \geq 2d$, and that the $\chi(M)$
correspond to a $d$-dimensional representation of $U_n$ over $k$.

\begin{proposition}
\label{UnProp1} For any $l,m,t$ and $u$,
\[ \chi(p^l \varepsilon_{tn}) \chi(p^m \varepsilon_{1u}) = {p^l \varepsilon_{tn} + p^m \varepsilon_{1u}
 \choose p^l \varepsilon_{tn}, p^m \varepsilon_{1u}  }\chi(p^l
\varepsilon_{tn}+p^m \varepsilon_{1u}) \]

\end{proposition}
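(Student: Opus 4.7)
The plan is to apply equation \ref{TheMainEquation} and compare coefficients of the monomial tensor $x_{tn}^{p^l} \otimes x_{1u}^{p^m}$ on both sides. On the left this coefficient is exactly $\chi(p^l \varepsilon_{tn}) \chi(p^m \varepsilon_{1u})$, so the proposition reduces to evaluating the matching coefficient on the right.

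For the right side, I would solve the system
\[ \prod_{1 \leq i < j \leq n} x_{ij}^{L_{ij}} = x_{tn}^{p^l}, \quad \prod_{1 \leq i < j \leq n} x_{ij}^{R_{ij}} = x_{1u}^{p^m} \]
in non-negative integers $s_{ij}^k$. The $L$-equation forces $L_{tn} = p^l$ and $L_{ij} = 0$ for $(i,j) \neq (t,n)$; since $L_{tn} = s_{tn}^{n-t+1}$ is a single-term sum, this pins $s_{tn}^{n-t+1} = p^l$, and by Lemma \ref{LandRlemmaOld}(1) every other variable $s_{ab}^c$ with $c \geq 2$ lies inside a different (hence vanishing) $L_{ij}$ and must be zero. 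The dual argument via Lemma \ref{LandRlemmaOld}(2) applied to the $R$-equations pins $s_{1u}^1 = p^m$ and forces every other $s_{ab}^1$ to vanish. Hence the system has a unique solution, from which I can read off $M = S_1 + \cdots + S_n$ and the multinomial coefficient $\binom{M}{S_1, \ldots, S_n}$.

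I then conclude with a small case split on whether $(t, u) = (1, n)$. When $(t, u) \neq (1, n)$ the two non-zero assignments land in distinct entries, so $M = p^l \varepsilon_{tn} + p^m \varepsilon_{1u}$ and the multinomial collapses to $1$; the multinomial appearing on the right hand side of the proposition is also $1$ by the same disjointness, and the two sides agree. When $(t, u) = (1, n)$ the variables $s_{1n}^1 = p^m$ and $s_{1n}^n = p^l$ both deposit at entry $(1, n)$, yielding $M = (p^l + p^m) \varepsilon_{1n}$ and multinomial $\binom{p^l + p^m}{p^l, p^m}$, which again matches the proposition verbatim at the single entry $(1, n)$. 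The main obstacle is pure bookkeeping; in particular, Lemma \ref{BigFatLemma} handles the sub-case $t > 1$ immediately, but when $t = 1$ the matrix $Y = p^l \varepsilon_{1n}$ has a non-zero top row and that lemma does not apply, which is why I would prefer to argue through Lemma \ref{LandRlemmaOld} uniformly.
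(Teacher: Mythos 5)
Your proposal is correct and follows essentially the same route as the paper: compare coefficients of $x_{tn}^{p^l}\otimes x_{1u}^{p^m}$ in equation \ref{TheMainEquation} and use Lemma \ref{LandRlemmaOld} to show the system $L_{tn}=p^l$, $R_{1u}=p^m$, all other $L$'s and $R$'s zero, has the unique solution $s_{tn}^{n-t+1}=p^l$, $s_{1u}^1=p^m$. The explicit case split on $(t,u)=(1,n)$ versus $(t,u)\neq(1,n)$ is a harmless refinement; the paper absorbs both cases into the single multinomial coefficient appearing in the statement.
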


\begin{proof}
We seek solutions to $L_{tn} = s_{tn}^{n-t+1} = p^l, R_{1u} =
s_{1u}^1 = p^m$, and all other $L_{ij},R_{ij} = 0$.   Notice
firstly that the variable $s_{tn}^{n-t+1}$ does not occur in any
of the $R_{ij}$, and that $s_{1u}^1$ does not occur in any of the
$L_{ij}$, and that neither occur in any more $L's$ or $R's$ (lemma
\ref{LandRlemmaOld}), so there is no conflict in setting $L_{tn} =
p^l, R_{1u} = p^m$; that is, we have at least one solution.  But
also by lemma \ref{LandRlemmaOld}, every other variable occurs in
at least one of the other $L_{ij}$ or $R_{ij}$, forcing them to be
zero, giving us exactly one solution, namely $s_{tn}^{n-t+1} =
p^l$, $s_{1u}^1 = p^m$, and all other $s_{ij}^k = 0$.  This gives
us that the coefficient of the monomial tensor $x_{tn}^{p^l}
\otimes x_{1u}^{p^m}$ is exactly
\[ {p^l \varepsilon_{tn} + p^m \varepsilon_{1u} \choose p^l
\varepsilon_{tn}, p^m \varepsilon_{1u} } \chi(p^l \varepsilon_{tn}
+ p^m \varepsilon_{1u}) \] and the proposition is proved.

\end{proof}

\begin{proposition}
\label{UnProp2} If $t \neq u$, then
\[ \chi(p^l \varepsilon_{1u}) \chi(p^m \varepsilon_{tn}) = {p^l
\varepsilon_{1u}+ p^m \varepsilon_{tn} \choose  p^m
\varepsilon_{tn},p^m \varepsilon_{tn}} \chi(p^l \varepsilon_{1u}+
p^m \varepsilon_{tn}) \]
\end{proposition}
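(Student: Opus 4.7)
The plan is to mirror the argument of Proposition \ref{UnProp1}: apply equation \ref{TheMainEquation} and extract the coefficient of $x_{1u}^{p^l} \otimes x_{tn}^{p^m}$ on the right-hand side, which must equal the matrix product $\chi(p^l \varepsilon_{1u})\chi(p^m \varepsilon_{tn})$ on the left. This amounts to solving the system $L_{1u} = p^l$, $R_{tn} = p^m$, with all remaining $L_{ij}$ and $R_{ij}$ equal to zero, in non-negative integer variables $s_{ij}^k$, and then summing the contributions ${M \choose S_1, \ldots, S_n}\chi(M)$ over all solutions.

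The key step is to show this system admits exactly one solution. Unlike in Proposition \ref{UnProp1}, neither $L_{1u} = \sum_{k=u}^n s_{1k}^u$ nor $R_{tn} = \sum_{k=1}^t s_{kn}^{t-k+1}$ is a single term, so uniqueness is not immediate. However, lemma \ref{LandRlemmaOld}(3) shows that each variable $s_{1k}^u$ with $k>u$ in $L_{1u}$ also appears in $R_{uk}$; since $u \neq t$, we have $(u,k) \neq (t,n)$ for all such $k$, so $R_{uk} = 0$ forces $s_{1k}^u = 0$, leaving only $s_{1u}^u = p^l$. Symmetrically, every variable $s_{kn}^{t-k+1}$ with $k<t$ appearing in $R_{tn}$ also appears in $L_{kt}$, and $t \neq u$ forces $L_{kt} = 0$, leaving only $s_{tn}^1 = p^m$. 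Parts (1) and (2) of lemma \ref{LandRlemmaOld} then confirm the two surviving variables $s_{1u}^u$ and $s_{tn}^1$ are not constrained by any further $L$ or $R$ (the former has the form $s_{i,k}^{k-i+1}$ and so appears in no $R$; the latter has the form $s_{ij}^1$ and so appears in no $L$), making the assignment both consistent and unique.

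With this unique solution we have $S_u = p^l \varepsilon_{1u}$, $S_1 = p^m \varepsilon_{tn}$, and all other $S_k = 0$, hence $M = p^l \varepsilon_{1u} + p^m \varepsilon_{tn}$ and the multinomial coefficient reduces to ${p^l \varepsilon_{1u} + p^m \varepsilon_{tn} \choose p^l \varepsilon_{1u},\, p^m \varepsilon_{tn}}$; this formula gracefully handles the degenerate case $t=1,\, u=n$, where the two matrix positions coincide at $(1,n)$ and the coefficient becomes the nontrivial ${p^l + p^m \choose p^l,\, p^m}$. The main obstacle throughout is the careful bookkeeping via lemma \ref{LandRlemmaOld} to exclude parasitic solutions; once that is organized the computation is purely mechanical, and notably neither the carrying lemma \ref{carryingLemma} nor the hypothesis $p \geq 2d$ is invoked here.
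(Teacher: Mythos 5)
Your proposal is correct and follows essentially the same route as the paper: both extract the coefficient of $x_{1u}^{p^l}\otimes x_{tn}^{p^m}$ from equation \ref{TheMainEquation} and use lemma \ref{LandRlemmaOld} to show the system $L_{1u}=p^l$, $R_{tn}=p^m$, all other $L,R=0$ has the unique solution $s_{1u}^u=p^l$, $s_{tn}^1=p^m$ (with $t\neq u$ entering exactly where you say, to guarantee the parasitic variables of $L_{1u}$ and $R_{tn}$ land in expressions constrained to vanish). Your version is in fact slightly more explicit than the paper's about which $R_{uk}$ and $L_{kt}$ kill each extra variable, and you correctly normalize the paper's typographical slip in the multinomial coefficient, but the argument is the same.
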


\begin{proof}
We again examine
\[L_{1u} = \sum_{k=u}^n s_{1,k}^u \quad \text{ and } \quad  R_{tn} = \sum_{k=1}^t s_{t-k+1,n}^k \]
We seek solutions to $L_{1u} = p^l, R_{tn} = p^m$, and all other
$L_{ij},R_{ij} = 0$.  We claim there is exactly one solution,
namely $s_{1u}^u = p^l$, $s_{tn}^1 = p^m$, and all other $s_{ij}^k
= 0$.  First, $L_{1u}$ and $R_{tn}$ share no variables in common.
Second, the only variable occurring in $L_{1u}$ which
\emph{doesn't} also occur in some $R_{ij}$ is $s_{1u}^u$, and the
only variable occurring in $R_{tn}$ which \emph{doesn't} occur in
some $L_{ij}$ is $s_{tn}^1$ (lemma \ref{LandRlemmaOld}). This
forces all of the variables occurring in $L_{1u}$ or $R_{tn}$ to
be zero, except for these two, forcing $s_{1u}^u = p^l$, $s_{tn}^1
= p^m$. This gives that the coefficient of the monomial tensor
$x_{1u}^{p^l} \otimes x_{t4}^{p^m}$ in equation
\ref{TheMainEquation} is exactly
\[ {p^l
\varepsilon_{1u}+ p^m \varepsilon_{t4} \choose  p^m
\varepsilon_{t4},p^m \varepsilon_{t4}} \chi(p^l \varepsilon_{1u}+
p^m \varepsilon_{t4}) \] which proves the proposition.
\end{proof}

\begin{corollary}
If $t \neq u$, then for any $l$ and $m$,
\[ [\chi(p^l \varepsilon_{1u}), \chi(p^m \varepsilon_{tn})] = 0 \]
\end{corollary}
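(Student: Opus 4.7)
The proof is almost immediate given the two preceding propositions. The plan is to compute both orderings of the product and observe they coincide.

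First I would apply Proposition \ref{UnProp2} (whose hypothesis $t \neq u$ is precisely the hypothesis of the corollary) to obtain
\[
\chi(p^l \varepsilon_{1u}) \chi(p^m \varepsilon_{tn}) = {p^l \varepsilon_{1u} + p^m \varepsilon_{tn} \choose p^l \varepsilon_{1u},\, p^m \varepsilon_{tn}} \chi(p^l \varepsilon_{1u} + p^m \varepsilon_{tn}).
\]
Next I would apply Proposition \ref{UnProp1}, which holds unconditionally for all $l, m, t, u$, with the roles of $l$ and $m$ interchanged, to obtain
\[
\chi(p^m \varepsilon_{tn}) \chi(p^l \varepsilon_{1u}) = {p^m \varepsilon_{tn} + p^l \varepsilon_{1u} \choose p^m \varepsilon_{tn},\, p^l \varepsilon_{1u}} \chi(p^m \varepsilon_{tn} + p^l \varepsilon_{1u}).
\]

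Since matrix addition is commutative, the $\chi$ factor on each right-hand side is the same matrix. Moreover, the formal multinomial coefficient ${A + B \choose A, B}$, which by the convention established in Section 2 is the product of the entrywise binomial coefficients ${a_{ij}+b_{ij} \choose a_{ij}, b_{ij}}$, is symmetric in its lower arguments. Hence the two right-hand sides are identical, and subtracting them yields the desired commutator relation.

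There is essentially no obstacle here; the real combinatorial work was already carried out in establishing Propositions \ref{UnProp1} and \ref{UnProp2}, and this corollary merely packages their symmetry. Note also that no case analysis on whether $(1,u) = (t,n)$ is required: even when $t = 1$ and $u = n$ (so that both $\varepsilon$-matrices coincide with $\varepsilon_{1n}$), both propositions apply and give the same formula, so the argument goes through uniformly.
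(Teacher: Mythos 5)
Your proposal is correct and matches the paper's own (one-line) proof exactly: both orderings of the product are evaluated via Propositions \ref{UnProp2} and \ref{UnProp1} respectively, and the symmetry of the multinomial coefficient and of matrix addition forces the two results to coincide. Nothing further is needed.
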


\begin{proof}
By propositions \ref{UnProp1} and \ref{UnProp2}, $\chi(p^l
\varepsilon_{1u})\chi(p^m \varepsilon_{tn}) = \chi(p^m
\varepsilon_{tn})\chi(p^l \varepsilon_{1u})$.

\end{proof}

\begin{proposition}
For any $1 \leq t < u \leq n$ and any $l$ and $m$,
\[ [\chi(p^l \varepsilon_{1n}), \chi(p^m \varepsilon_{tu})] = 0 \]
\end{proposition}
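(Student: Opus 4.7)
The plan is to adapt the strategy of Propositions \ref{UnProp1} and \ref{UnProp2}: extract $\chi(p^l \varepsilon_{1n})\chi(p^m \varepsilon_{tu})$ and $\chi(p^m \varepsilon_{tu})\chi(p^l \varepsilon_{1n})$ as coefficients of the monomial tensors $x_{1n}^{p^l}\otimes x_{tu}^{p^m}$ and $x_{tu}^{p^m}\otimes x_{1n}^{p^l}$ on the right-hand side of equation \ref{TheMainEquation}, and show both equal $\chi(p^l\varepsilon_{1n}+p^m\varepsilon_{tu})$.

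First I would dispose of the easy sub-cases: if $t=1$ or $u=n$, the two generators $\varepsilon_{1n}$ and $\varepsilon_{tu}$ sit together either in the top-row copy of $G_a^{n-1}$ or in the right-most-column copy of $G_a^{n-1}$ inside $U_n$. Both commutators then vanish by the additive-group result quoted as Proposition \ref{GaProp} (or equivalently by the induction described on page \pageref{embeddings}). So I may assume $2\leq t<u\leq n-1$.

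For the coefficient of $x_{1n}^{p^l}\otimes x_{tu}^{p^m}$, I would solve the system $L_{1n}=p^l$, $R_{tu}=p^m$, and all other $L_{ij},R_{ij}=0$. Since $L_{1n}$ consists of the single variable $s_{1n}^{n}$, it forces $s_{1n}^{n}=p^l$, and by Lemma \ref{LandRlemmaOld}(2) this variable appears in no $R_{ij}$, so there is no conflict. For $R_{tu}=\sum_{k=1}^{t}s_{ku}^{t-k+1}$, Lemma \ref{LandRlemmaOld}(3) shows that each summand with $k<t$ also appears in $L_{k,t}$; since $t\leq n-1$ we have $(k,t)\neq(1,n)$, so $L_{k,t}=0$ forces those summands to zero. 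This leaves $s_{tu}^{1}=p^m$. A routine application of Lemma \ref{LandRlemmaOld} shows every other variable lies in one of the other vanishing $L$'s or $R$'s (using in particular that the only variable with upper index $n$ is $s_{1n}^n$), and so is forced to zero. The unique solution therefore has $S_1=p^m\varepsilon_{tu}$, $S_n=p^l\varepsilon_{1n}$, and $S_k=0$ otherwise; since the supports $(1,n)$ and $(t,u)$ are disjoint, the multinomial coefficient equals $1$, giving $\chi(p^l\varepsilon_{1n})\chi(p^m\varepsilon_{tu})=\chi(p^l\varepsilon_{1n}+p^m\varepsilon_{tu})$.

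The coefficient of $x_{tu}^{p^m}\otimes x_{1n}^{p^l}$ is handled symmetrically: $R_{1n}=s_{1n}^{1}=p^l$ is a single variable, while in $L_{tu}=\sum_{k=u}^{n}s_{tk}^{u-t+1}$ the diagonal term $s_{tu}^{u-t+1}$ lies in no $R_{ij}$, and the terms with $k>u$ lie in $R_{u,k}$, which vanishes because $u\geq 2$ implies $(u,k)\neq(1,n)$. This forces $s_{tu}^{u-t+1}=p^m$, the multinomial coefficient is again $1$, and $\chi(p^m\varepsilon_{tu})\chi(p^l\varepsilon_{1n})=\chi(p^l\varepsilon_{1n}+p^m\varepsilon_{tu})$. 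Subtracting yields the claim. The main obstacle is purely bookkeeping: checking that the two systems each have a unique solution. This reduces to the same kind of variable-tracking argument used in Propositions \ref{UnProp1} and \ref{UnProp2}, the key point being that the ``top-right'' position $(1,n)$ and the ``interior'' position $(t,u)$ together avoid every cross-term from Lemma \ref{LandRlemmaOld}(3) that could couple the two factors.
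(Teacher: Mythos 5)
Your proposal is correct and follows exactly the route the paper takes: it computes both products $\chi(p^l\varepsilon_{1n})\chi(p^m\varepsilon_{tu})$ and $\chi(p^m\varepsilon_{tu})\chi(p^l\varepsilon_{1n})$ as coefficients in equation \ref{TheMainEquation}, using that $L_{1n}$ and $R_{1n}$ are single variables to show each system has a unique solution yielding $\chi(p^l\varepsilon_{1n}+p^m\varepsilon_{tu})$. You have simply written out the bookkeeping that the paper leaves to the reader, and your details check out.
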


\begin{proof}
Note that $L_{1n} = s_{1n}^n$ and that $R_{1n} = s_{1n}^1$, both
consisting of a single variable. We leave it to the reader then to
compute the products $\chi(p^l \varepsilon_{1n})\chi(p^m
\varepsilon_{tu})$ and $ \chi(p^m \varepsilon_{tu})\chi(p^l
\varepsilon_{1n})$, to see that each have exactly one solution,
and these solutions give the same answer for both.
\end{proof}

Thus far we have not used at all the assumption that $p \geq 2d$.
This will change as we consider the last case, namely $[\chi(p^l
\varepsilon_{1u}), \chi(p^m \varepsilon_{un})]$.

\begin{proposition}
For any $l,m$, and $1 < u \leq n$,

\[ [\chi(p^l \varepsilon_{1u}), \chi(p^m \varepsilon_{un})] =
\sum_{k=1}^{\text{min}(p^l,p^m)} \chi((p^m -k)
\varepsilon_{un})\chi((p^l-k)\varepsilon_{1,u}) \chi(k
\varepsilon_{1n}) \]

\end{proposition}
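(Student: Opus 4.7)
The plan is to apply equation \ref{TheMainEquation} separately to the two orderings $\chi(p^l \varepsilon_{1u})\chi(p^m \varepsilon_{un})$ and $\chi(p^m \varepsilon_{un})\chi(p^l \varepsilon_{1u})$ and then subtract. The second ordering is already handled by proposition \ref{UnProp1}: taking its ``$t$'' to be our $u$ (and noting that $\varepsilon_{un}$ and $\varepsilon_{1u}$ are supported in disjoint entries, so the multinomial coefficient is $1$), it gives $\chi(p^m \varepsilon_{un})\chi(p^l \varepsilon_{1u}) = \chi(p^m \varepsilon_{un} + p^l \varepsilon_{1u})$, and lemma \ref{UnDecompositionLemma} re-factors this as $\chi(p^m \varepsilon_{un})\chi(p^l \varepsilon_{1u})$. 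This will turn out to match the $k=0$ term of the sum computed below, so all the real content lies in the first ordering.

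For that ordering I must identify the coefficient of the monomial tensor $x_{1u}^{p^l} \otimes x_{un}^{p^m}$ on the right side of equation \ref{TheMainEquation}, i.e.\ enumerate all solutions of the system $L_{1u} = p^l$, $R_{un} = p^m$, and $L_{ij} = R_{ij} = 0$ otherwise. Unlike the earlier propositions of this section, here $L_{1u}$ and $R_{un}$ share a common variable; by part (3) of lemma \ref{LandRlemmaOld} (applied with $j = u$) that variable is precisely $s_{1n}^u$, so a \emph{family} of solutions indexed by this shared variable is expected rather than a single one.

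The forced-zero bookkeeping runs as follows. In $L_{1u} = \sum_{c=u}^n s_{1c}^u$, the only summand avoiding every $R_{ij}$ is the super-diagonal variable $s_{1u}^u$; each intermediate $s_{1c}^u$ with $u<c<n$ also sits inside $R_{uc}$, and since $R_{uc} = 0$ by hypothesis it must vanish. Symmetrically, in $R_{un} = \sum_{c=1}^u s_{cn}^{u-c+1}$ only $s_{un}^1$ avoids every $L_{ij}$, and each intermediate $s_{cn}^{u-c+1}$ with $1<c<u$ is trapped in $L_{cu} = 0$. What survives is $s_{1u}^u + s_{1n}^u = p^l$ and $s_{un}^1 + s_{1n}^u = p^m$, parameterized by $k := s_{1n}^u \in \{0,1,\ldots,\min(p^l,p^m)\}$. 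At every entry of the resulting matrix $M = (p^m-k)\varepsilon_{un} + (p^l-k)\varepsilon_{1u} + k\varepsilon_{1n}$ only one of $S_1,\ldots,S_n$ contributes nontrivially, so each multinomial coefficient $\binom{M}{S_1,\ldots,S_n}$ collapses to $1$.

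Assembling these pieces gives
\[ \chi(p^l \varepsilon_{1u})\chi(p^m \varepsilon_{un}) = \sum_{k=0}^{\min(p^l,p^m)} \chi\bigl((p^m-k)\varepsilon_{un} + (p^l-k)\varepsilon_{1u} + k\varepsilon_{1n}\bigr), \]
and applying lemma \ref{UnDecompositionLemma} to each summand (the $i=u$ factor appearing before the two $i=1$ factors in the prescribed ordering) rewrites each term as $\chi((p^m-k)\varepsilon_{un})\chi((p^l-k)\varepsilon_{1u})\chi(k\varepsilon_{1n})$. Subtracting the $k=0$ contribution, which by the opening paragraph equals $\chi(p^m \varepsilon_{un})\chi(p^l \varepsilon_{1u})$, produces the claimed commutator. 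The main delicate point to verify carefully will be the forced-zero argument: one must confirm via lemma \ref{LandRlemmaOld} that $s_{1n}^u$ is genuinely the \emph{only} remaining degree of freedom and that no stray variable among the $s_{ij}^k$ slips through to contribute an unwanted term.
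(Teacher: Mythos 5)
Your proposal is correct and follows essentially the same route as the paper: identify the shared variable $s_{1n}^u$ of $L_{1u}$ and $R_{un}$ via lemma \ref{LandRlemmaOld}, parameterize the one-dimensional family of solutions by $k = s_{1n}^u$, note the multinomial coefficients are all $1$, factor each term with lemma \ref{UnDecompositionLemma}, and peel off the $k=0$ term as the reversed product. The only cosmetic difference is that you invoke proposition \ref{UnProp1} to recognize the $k=0$ term, where the paper reads it off directly from the decomposition lemma.
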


\begin{proof}
We shall compute the product $\chi(p^l \varepsilon_{1u}) \chi(p^m
\varepsilon_{un})$.  Thus we seek solutions to
\[ L_{1u} = p^l \text{ and } R_{un} = p^m \]
with all other $L_{ij},R_{ij}$ equal to zero.  Firstly, $L_{1u}$
and $R_{un}$ have exactly one variable in common, namely
$s_{1n}^u$. Secondly, there is exactly one variable occurring in
$L_{1u}$ which doesn't occur in any $R_{ij}$, namely $s_{1u}^u$,
and exactly one variable occurring in $R_{un}$ which doesn't occur
in any of the $L_{ij}$, namely $s_{un}^1$ (lemma
\ref{LandRlemmaOld}). This gives
\[ s_{1u}^u + s_{1n}^u = p^l \text{ and } s_{un}^1 + s_{1n}^u
= p^m \] with all other $s_{ij}^k$ equal to zero.  Clearly then,
every non-negative integer value of $s_{1n}^u$ no greater than
either $p^l$ or $p^m$ gives a solution, and these are the only
solutions.  If $k = s_{1n}^u$ is any such value, its contribution
to the coefficient of $x_{1u}^{p^l} \otimes x_{un}^{p^m}$ in
equation \ref{TheMainEquation} is
\[ {k \varepsilon_{1n} + (p^l-k)\varepsilon_{1u} + (p^m -k)
\varepsilon_{un} \choose k
\varepsilon_{1n},(p^l-k)\varepsilon_{,u},(p^m -k)
\varepsilon_{un}} \chi(k \varepsilon_{1n} +
(p^l-k)\varepsilon_{1u} + (p^m -k) \varepsilon_{un}) \] Note that
it is impossible for any of the tuples $(1,n),(n,u)$ or $(u,n)$ to
be equal, so the above multinomial coefficient is exactly $1$.
Thus we can write
\[ \chi(p^l \varepsilon_{1u}) \chi(p^m
\varepsilon_{un}) = \sum_{k=0}^{\text{min}(p^l,p^m)}\chi(k
\varepsilon_{1n} + (p^l-k)\varepsilon_{1u} + (p^m -k)
\varepsilon_{un}) \] and by lemma \ref{UnDecompositionLemma} we
can further write
\[ \chi(p^l \varepsilon_{1u}) \chi(p^m
\varepsilon_{un}) = \sum_{k=0}^{\text{min}(p^l,p^m)} \chi((p^m -k)
\varepsilon_{un})\chi((p^l-k)\varepsilon_{1u}) \chi(k
\varepsilon_{1n}) \] Finally, note that the value $k=0$ gives
$\chi(p^m \varepsilon_{un}) \chi(p^l \varepsilon_{1u})$, so we can
write
\[ \chi(p^l \varepsilon_{1u}) \chi(p^m
\varepsilon_{un}) - \chi(p^m \varepsilon_{un}) \chi(p^l
\varepsilon_{1u}) = \sum_{k=1}^{\text{min}(p^l,p^m)} \chi((p^m -k)
\varepsilon_{un})\chi((p^l-k)\varepsilon_{1u}) \chi(k
\varepsilon_{1n}) \] which proves the proposition.

\end{proof}

\begin{corollary}
If $p \geq 2d$, then
\[ [\chi(p^l \varepsilon_{1u}), \chi(p^m \varepsilon_{un})] = \left\{
\begin{array}{cc}
  0 & \hspace{.3cm}\text{ \emph{if }} l \neq m \\
  \chi(p^m \varepsilon_{1n}) & \hspace{.3cm}\text{ otherwise
  }\\
\end{array}
\right.
\]

\end{corollary}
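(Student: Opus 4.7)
My plan is to start from the formula in the preceding proposition, which expresses the commutator as a sum
\[ [\chi(p^l \varepsilon_{1u}), \chi(p^m \varepsilon_{un})] = \sum_{k=1}^{\min(p^l,p^m)} \chi((p^m-k)\varepsilon_{un}) \chi((p^l-k)\varepsilon_{1u}) \chi(k\varepsilon_{1n}), \]
and show, using the carrying lemma (Lemma \ref{carryingLemma}), that all terms in this sum vanish except for one. The key arithmetic observation I would first isolate as a small sub-claim is: if $r,s \geq 1$ are non-negative integers with $r+s = p^j$, then the sum $r+s$ carries modulo $p$. This follows because $p^j$ has $p$-ary digits $0$ in positions $<j$ and $1$ in position $j$; absence of carrying would force $r_i = s_i = 0$ for $i<j$ and $r_j + s_j = 1$, but since $r,s < p^j$ both $r_j$ and $s_j$ vanish, a contradiction.

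With this in hand, I would handle each term in the sum by locating a pair of adjacent factors whose indices sum in a carrying way, so that Lemma \ref{carryingLemma} forces the entire product to be zero. For $1 \leq k < \min(p^l,p^m)$, both $p^l-k$ and $k$ (or both $p^m-k$ and $k$) are positive and sum to a power of $p$, so the pair $\chi((p^l-k)\varepsilon_{1u}), \chi(k\varepsilon_{1n})$ (or the symmetric one on the left) includes a zero factor. The only potentially nonzero term is the boundary term $k = \min(p^l,p^m)$, which I would split into cases.

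In the boundary case, if $l=m$ then $k=p^l$ collapses the two leading factors to $\chi(0) = \mathrm{Id}$ and leaves precisely $\chi(p^m \varepsilon_{1n})$, giving the stated value. If $l<m$, the term becomes $\chi((p^m - p^l)\varepsilon_{un})\chi(p^l \varepsilon_{1n})$, and I would kill this by applying the sub-claim to $(p^m - p^l) + p^l = p^m$ (both summands strictly positive, hence carrying). The case $l > m$ is handled symmetrically using $(p^l - p^m) + p^m = p^l$. Combining all cases yields the stated dichotomy.

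The main subtlety I expect is being careful that the carrying lemma applies to the \emph{correct} pair of factors in each term: a priori the three-factor product $\chi(A)\chi(B)\chi(C)$ could be nonzero if only a pair of non-adjacent factors triggers carrying, but associativity of matrix multiplication lets me group factors however I like, so whenever one of the $\chi$'s itself vanishes the whole product is zero. Once this is observed the argument becomes a straightforward bookkeeping exercise, and the hypothesis $p \geq 2d$ enters only through its earlier use in establishing Lemma \ref{carryingLemma}.
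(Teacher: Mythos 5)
Your proposal is correct and follows essentially the same route as the paper: starting from the sum formula of the preceding proposition and killing all but the boundary term via Lemma \ref{carryingLemma} applied to a pair of indices summing to a power of $p$ (the paper simply uses the pair $(p^m-k)+k$ uniformly when $l<m$, which absorbs your separate boundary case into the general one). Your explicit sub-claim about $r+s=p^j$ carrying when $r,s\geq 1$ is a correct and worthwhile elaboration of what the paper calls ``clearly some carrying.''
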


\begin{proof}
By the previous proposition, examine \[[\chi(p^l
\varepsilon_{1u}), \chi(p^m \varepsilon_{un})] =
\sum_{k=1}^{\text{min}(p^l,p^m)} \chi((p^m -k)
\varepsilon_{un})\chi((p^l-k)\varepsilon_{1,u}) \chi(k
\varepsilon_{14}) \] First suppose that $l \neq m$, say that $l <
m$.  Then for every value $k$ of this summation, there is clearly
some `carrying' in computing the sum $(p^m - k) + k$. Then by
lemma \ref{carryingLemma}, at least one of $\chi((p^m-k)
\varepsilon_{un})$ or $\chi(k \varepsilon_{1n})$ is always zero,
forcing every term in the summation to be zero.  This gives
$[\chi(p^l \varepsilon_{1u}), \chi(p^m \varepsilon_{un})] = 0$; an
analogous proof holds in case $l > m$.

Now suppose that $l=m$.  Then for the same reason every term in
the above summation is zero, \emph{except} for the last term
$k=p^m$, since $(p^m - p^m) + p^m$ does not carry.  This gives
\[[\chi(p^m \varepsilon_{1u}),
\chi(p^m \varepsilon_{un})] = \chi(0 \varepsilon_{un}) \chi(0
\varepsilon_{1u}) \chi(p^m \varepsilon_{1n})\] Recalling that
$\chi(0) = \text{Id}$, this completes the proof.
\end{proof}

Theorem \ref{TheMainTheoremSub} is now proved, and hence also part
(2) of our main theorem, \ref{TheMainTheorem}.

\section{The Baker-Campbell-Hausdorff formula}

The proof of part (1) of our main theorem (\ref{TheMainTheorem})
will require some understanding of the Baker-Campbell-Hausdorff
formula.  For the reader's convenience, here we briefly review the
calculation of said formula (here on referred to as $BCH$) as
computed by E.~B.~Dynkin for the case of characteristic zero in
\cite{dynkin1}, \cite{dynkin2} and \cite{dynkin3}. The principal
result of course is that, for not-necessarily-commutative
variables $x$ and $y$ in an associative algebra over a
characteristic zero field, the series $\log(e^x e^y)$ can be
expressed as a formal infinite series of brackets of $x$ and $y$,
brackets of brackets of $x$ and $y$, etc., all with coefficients
given by rational numbers.

The details are important to us; we will claim later that these
arguments apply just as well to the characteristic $p>0$ setting,
under some additional assumptions that disallow the appearance of
any rational numbers with denominators divisible by $p$.

In what follows we shall follow Dynkin in using the less
cumbersome notation $x \circ y = xy-yx$ for the commutator
operator instead of $[x,y]$.  As $\circ$ is by no means
associative, when we write $x_1 \circ x_2 \circ \ldots \circ x_n$,
we shall take it to be \emph{left}-nested; e.g., $x_1 \circ x_2
\circ x_3 \circ x_4 \stackrel{\text{def}}{=} (((x_1 \circ x_2)
\circ x_3) \circ x_4)$.

\begin{lemma}
\label{theLogSeriesLemma} Define the formal infinite series
\[ e^x = \sum_{k=0}^\infty \frac{x^k}{k!} \qquad \log(x) =
\sum_{k=1}^\infty \frac{(-1)^{k-1}}{k} (x-1)^k \] Then $\log(e^x
e^y)$ can be written

\[ \sum \frac{(-1)^{k-1}}{k} \frac{1}{p_1!q_1!\ldots p_k!q_k!}x^{p_1}
y^{q_1} \ldots x^{p_k} y^{q_k} \] where the summation runs over
all tuples of non-negative integers $(p_1, q_1, \ldots, p_k, q_k)$
with the property that $p_i + q_i \neq 0$ (the length $2k$ of the
tuples vary arbitrarily).
\end{lemma}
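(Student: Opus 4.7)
The plan is to derive the formula by brute formal-series substitution: expand $e^x$ and $e^y$ as their defining power series, multiply, subtract $1$, and then plug into the $\log$ series. There is no actual commutator magic at this stage; the Dynkin-style rewriting in terms of iterated $\circ$-brackets is a later step, not something needed for this particular lemma.

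First I would write
\[ e^x e^y \;=\; \left(\sum_{p \geq 0} \frac{x^p}{p!}\right)\left(\sum_{q \geq 0} \frac{y^q}{q!}\right) \;=\; \sum_{p,q \geq 0} \frac{x^p y^q}{p!\,q!}, \]
where the product is the formal Cauchy product in the free associative algebra on $x,y$ (so no commutativity is being used). Separating off the $(p,q)=(0,0)$ term gives
\[ e^x e^y - 1 \;=\; \sum_{\substack{p,q \geq 0 \\ p+q \geq 1}} \frac{x^p y^q}{p!\,q!}. \]

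Next I would substitute this into $\log(1+z) = \sum_{k \geq 1} \frac{(-1)^{k-1}}{k} z^k$, obtaining
\[ \log(e^x e^y) \;=\; \sum_{k \geq 1} \frac{(-1)^{k-1}}{k}\left(\sum_{\substack{p,q \geq 0 \\ p+q \geq 1}} \frac{x^p y^q}{p!\,q!}\right)^{\!k}. \]
Expanding the $k$-th power by distributing $k$ times, a monomial in the expansion is indexed by a choice of $(p_i,q_i)$ with $p_i+q_i \geq 1$ for each $i=1,\ldots,k$, and contributes
\[ \frac{1}{p_1!\,q_1!\cdots p_k!\,q_k!}\, x^{p_1} y^{q_1} \cdots x^{p_k} y^{q_k}. \]
Collecting these contributions gives exactly the claimed expression.

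The only thing that needs a word of justification is that these manipulations make sense as formal power series — i.e., that for each fixed total degree in $x,y$, only finitely many terms contribute, so the rearrangement and the outer sum over $k$ are legitimate. This is immediate because the $k$-th summand has each of its monomials of total degree at least $k$ (since $p_i+q_i \geq 1$), so only $k$'s up to the degree one is examining contribute. This will in fact be the only mild obstacle: no deep commutator identities are invoked here, which is appropriate since the lemma only asserts the primitive monomial form of $\log(e^x e^y)$, not yet its re-expression as a Lie series in $x$ and $y$ under the $\circ$ operation.
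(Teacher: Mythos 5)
Your proposal is correct and follows essentially the same route as the paper's own proof: expand $e^x e^y$ as a double sum, drop the constant term, substitute into the $\log$ series, and expand the $k$-th power into monomials indexed by tuples $(p_1,q_1,\ldots,p_k,q_k)$ with $p_i+q_i\neq 0$. Your added remark on formal-series convergence (each $k$-th summand having degree at least $k$) is a harmless elaboration of what the paper leaves implicit.
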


\begin{proof}

The trick is in collecting the terms correctly.  Write
\[ e^x e^y = \sum_{r=0}^\infty \sum_{s=0}^r \frac{x^s
y^{r-s}}{s!(r-s)!} \] and hence
\[ \log(e^x e^y) = \sum_{k=1}^\infty \frac{(-1)^{k-1}}{k}
\left(\sum_{r=1}^\infty \sum_{s=0}^r \frac{x^s y^{r-s}}{s!(r-s)!}
\right)^k \] An induction argument shows that, for fixed $k$,
\[\left(\sum_{r=1}^\infty \sum_{s=0}^r \frac{x^s
y^{r-s}}{s!(r-s)!} \right)^k = \sum_{{(p_1,q_1, \ldots, p_k,q_k)}
\atop {p_i + q_i \neq 0}} \frac{1}{p_1!q_1!\ldots p_k!q_k!}x^{p_1}
y^{q_1} \ldots x^{p_k} y^{q_k}  \] whence summing over all $k$ we
get
\[ \sum \frac{(-1)^{k-1}}{k} \frac{1}{p_1!q_1!\ldots p_k!q_k!}x^{p_1}
y^{q_1} \ldots x^{p_k} y^{q_k} \]

\end{proof}

From this expression we write
\begin{equation} \label{homogenousBCHformula}
\log(e^x e^y) = \sum_{m=1}^\infty P_m(x,y)
\end{equation}
where $P_m(x,y)$ is a homogeneous polynomial of degree $m$; for
example, $P_1(x,y) = x+y$, $P_2(x,y) = \frac{1}{2}(xy-yx)$, and
$P_3(x,y) =
\frac{1}{12}x^2y-\frac{1}{6}xyx+\frac{1}{12}xy^2+\frac{1}{12}y^2x+\frac{1}{12}yx^2-\frac{1}{6}yxy$.

Let $k$ be a field of characteristic zero and let $R$ be the free
associative (and non-commutative) algebra over $k$ on the
generators $x$ and $y$.  Define a linear mapping $\phi$ from $R$
to itself as follows: it sends the monomial $x_1x_2\ldots x_n$ to
\[ \frac{1}{n} x_1 \circ x_2 \circ \ldots \circ x_n \]
(here the $x_i$ can be either of $x$ or $y$).  For example,
$\phi(x^2y + x + 2xyx) = \frac{1}{3}x \circ x \circ y + x +
\frac{2}{3} x \circ y \circ x$.

\begin{proposition}
\label{allIsLeftNestedProp} Any bracket expression can be written
as a linear combination of left-nested bracket expressions, all of
length no greater than the original, and with no new coefficients
up to perhaps a negation.
\end{proposition}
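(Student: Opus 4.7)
The plan is to argue by induction using the Jacobi identity in its ``swap the right-nest'' form
\[ a \circ (b \circ c) = (a \circ b) \circ c - (a \circ c) \circ b, \]
which is the only algebraic tool needed. Since this identity preserves total length and has coefficients $\pm 1$ only, repeated application will never lengthen an expression nor introduce coefficients beyond $\pm 1$.

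I would set up a double induction: the outer induction is strong induction on the total length $n$ of the bracket expression, and the inner induction (for a fixed total length $n$) is on the length of the right child $B$ in a decomposition $A \circ B$. The base cases (length $\leq 2$) are trivial, since such expressions are already left-nested. For the inductive step, consider an expression $A \circ B$ of total length $n$.

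If $\text{len}(B) = 1$, then by the outer inductive hypothesis $A$ itself can be written as $\sum \pm L_i$ with each $L_i$ left-nested, and then $A \circ B = \sum \pm (L_i \circ B)$ is a linear combination of left-nested expressions of length $n$ with $\pm 1$ coefficients. If $\text{len}(B) \geq 2$, write $B = B_1 \circ B_2$ and apply the displayed Jacobi identity to obtain
\[ A \circ B = (A \circ B_1) \circ B_2 - (A \circ B_2) \circ B_1. \]
Both $A \circ B_1$ and $A \circ B_2$ have total length strictly less than $n$, so the outer hypothesis rewrites each as a $\pm 1$ combination of left-nested expressions, and distributing the outer bracket gives expressions of total length $n$ whose right children are $B_2$ and $B_1$ respectively -- both of strictly smaller length than $B$. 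The inner hypothesis (applied at total length $n$, with strictly smaller right child) then rewrites each such term as a $\pm 1$ combination of left-nested expressions, completing the induction.

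The only real obstacle is bookkeeping: one must check that neither induction step can inflate the coefficients or the length. Both properties are manifest from the Jacobi identity itself, since it has coefficients $\pm 1$ and the two monomials on its right-hand side have the same total length as the one on its left. Thus the proof is essentially a mechanical unwinding, and the coefficients remain in $\{0, \pm 1\}$ throughout.
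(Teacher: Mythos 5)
Your proof is correct and is essentially the argument the paper gives: the same Jacobi rearrangement $a\circ(b\circ c)=(a\circ b)\circ c-(a\circ c)\circ b$ drives an induction that shrinks the right-hand factor, and the coefficients visibly stay in $\{0,\pm1\}$ with no growth in length. Your explicit double induction on the pair (total length, length of the right child) simply makes precise the paper's opening reduction ``it suffices to treat $P\circ Q$ with $P,Q$ left-nested,'' after which the paper runs the same inner induction on the length of $Q$.
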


\begin{proof}
Clearly it suffices to prove that, if $P$ and $Q$ themselves are
left-nested brackets, then $P \circ Q$ can be written as a linear
combination of left-nested brackets. Let $\phi(n)$ be the
statement ``if $P$ is a left-nested bracket of any length, and if
$Q$ is a left-nested bracket of length $\leq n$, then $P \circ Q$
can be written as a linear combination of left-nested brackets''.
Certainly $\phi(1)$ is true.  Now suppose that $\phi(n)$ is true,
and let $P$ be any left-nested bracket, $Q \circ x$ a left-nested
bracket expression of length $n+1$, so necessarily $Q$ is
left-nested of length $n$. Then (by Jacobi and anti-commutativity)
\begin{equation*}
\begin{split}
 P \circ (Q \circ x) &= -Q \circ (x \circ P) -x \circ (P \circ Q)
 \\
 &= (x \circ P) \circ Q + (P \circ Q) \circ x \\
 &= -(P \circ x) \circ Q + (P \circ Q) \circ x \\
\end{split}
\end{equation*}

$P \circ x$ is left-nested, and since $\phi(n)$ is true, $-(P
\circ x) \circ Q$ is a linear combination of left-nested brackets.
The same can be said of $P \circ Q$, whence (by linearity of
$\circ$) $(P \circ Q) \circ x$ is a linear combination of
left-nested brackets, whence $P \circ (Q \circ x)$ is also a
linear combination of left-nested brackets.
\end{proof}

\begin{proposition}
\label{phiIsBeautifulProp} If $x_1 \circ x_2 \circ \ldots \circ
x_n$ is any left-nested bracket expression, then $\phi(x_1 \circ
x_2 \circ \ldots \circ x_n) = x_1 \circ x_2 \circ \ldots \circ
x_n$.
\end{proposition}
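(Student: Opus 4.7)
The plan is to induct on $n$, the length of the left-nested bracket. The case $n=1$ is immediate since $\phi(x_1) = x_1$ by definition. For the inductive step, set $L_{n-1} := x_1 \circ \cdots \circ x_{n-1}$ and expand $L_n = L_{n-1} \circ x_n$ in the free associative algebra as $L_n = L_{n-1}\, x_n - x_n\, L_{n-1}$. Writing $L_{n-1} = \sum_\alpha c_\alpha w_\alpha$ as a sum of monomials of length $n-1$, for each $\alpha$ one has $\phi(w_\alpha\, x_n) = \frac{1}{n}(B(w_\alpha) \circ x_n)$, where $B(y_1 \cdots y_m) := y_1 \circ \cdots \circ y_m$ is the left-nesting operator, since appending $x_n$ on the right simply left-brackets one more time. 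Combining this with the inductive hypothesis $\sum_\alpha c_\alpha B(w_\alpha) = (n-1) L_{n-1}$ yields $\phi(L_{n-1}\, x_n) = \frac{n-1}{n} L_n$.

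The main obstacle is the remaining term $\phi(x_n\, L_{n-1})$; I would show it equals $-\frac{1}{n} L_n$, so that $\phi(L_n) = \frac{n-1}{n} L_n + \frac{1}{n} L_n = L_n$. Left multiplication by a letter does not pass through the left-nesting operator as cleanly as right multiplication does, and this forces a secondary induction. The key sub-lemma is: for any element $x$ and any Lie polynomial $v$ of degree $m \geq 1$, if $v = \sum_\beta d_\beta u_\beta$ in the free associative algebra, then $\sum_\beta d_\beta B_x(u_\beta) = x \circ v$, where $B_x(y_1 \cdots y_m) := x \circ y_1 \circ \cdots \circ y_m$ (left-nested).

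To prove this sub-lemma one inducts on $m$. The base $m=1$ is immediate. For $m \geq 2$, Proposition \ref{allIsLeftNestedProp} reduces matters to the case $v = u \circ z$ with $u$ a left-nested bracket of length $m-1$ and $z$ a letter, so $v = u z - z u$ in the associative algebra. Expanding $u = \sum_\gamma d_\gamma t_\gamma$, the piece $\sum_\gamma d_\gamma B_x(t_\gamma z)$ collapses to $(x \circ u) \circ z$ via the right-append identity together with the sub-lemma at degree $m-1$ applied to $u$. The piece $\sum_\gamma d_\gamma B_x(z t_\gamma)$ equals $\sum_\gamma d_\gamma B_{x \circ z}(t_\gamma)$ by grouping the first two factors, and the sub-lemma at degree $m-1$ applied with the element $x \circ z$ in place of $x$ (which is precisely why it must be stated for an arbitrary first factor, not only letters) then evaluates this to $(x \circ z) \circ u$. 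The Jacobi identity, rewritten as $x \circ (u \circ z) = (x \circ u) \circ z - (x \circ z) \circ u$, reconciles these two contributions and closes the sub-induction. Finally, applying the sub-lemma with $x = x_n$ and $v = L_{n-1}$ gives $\phi(x_n\, L_{n-1}) = \frac{1}{n}(x_n \circ L_{n-1}) = -\frac{1}{n} L_n$, completing the main induction.
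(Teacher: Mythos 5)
Your argument is correct. The paper does not actually prove Proposition \ref{phiIsBeautifulProp} --- it simply cites pages 31--32 of Dynkin --- so you have supplied a complete proof where the paper defers to the literature. What you have written is essentially the classical Dynkin--Specht--Wever induction: the outer induction splits $\phi(L_{n-1}x_n - x_nL_{n-1})$ into a ``right-append'' piece, which passes through the left-nesting operator trivially and contributes $\tfrac{n-1}{n}L_n$ by the inductive hypothesis, and a ``left-prepend'' piece handled by your sub-lemma, which is the standard key fact that left-nested bracketing with a fixed prefix, applied to the associative expansion of a Lie element $v$, returns $x\circ v$ (equivalently, that the anti-homomorphic extension of $y\mapsto(\,\cdot\,\circ y)$ to the free associative algebra restricts correctly to Lie elements). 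Your insistence on stating the sub-lemma for an arbitrary first factor $x$, so that the induction can be re-applied with $x\circ z$ as prefix, is exactly the point where a naive formulation would break down, and your use of Jacobi in the form $x\circ(u\circ z)=(x\circ u)\circ z-(x\circ z)\circ u$ closes the induction correctly. The one small point worth making explicit is that when you invoke Proposition \ref{allIsLeftNestedProp} to reduce to $v=u\circ z$, the resulting left-nested brackets all have length exactly $m$ (the Jacobi rewriting preserves the multiset of letters in each term), so the degree bookkeeping in the sub-induction is sound.
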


\begin{proof}
See pages 31-32 of \cite{dynkin2}.
\end{proof}

The proposition predicts, for example, that $\phi(x \circ y \circ
z) = \phi(xyz - yxz - zxy + zyx) = \frac{1}{3}(x \circ y \circ z)
- \frac{1}{3} y \circ x \circ z - \frac{1}{3} z \circ x \circ y +
\frac{1}{3} z \circ y \circ x) = x \circ y \circ z$.

\begin{proposition}
The homogeneous polynomials $P_i(x,y)$ in formula
\ref{homogenousBCHformula} can all be written as linear
combinations (using only rational numbers) of nested commutators
of $x$ and $y$.
\end{proposition}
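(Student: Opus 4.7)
My plan is to combine the explicit formula of Lemma \ref{theLogSeriesLemma} with the Dynkin endomorphism $\phi$. The key claim I would prove is that $\phi$ fixes every homogeneous piece $P_m(x,y)$, i.e. $\phi(P_m) = P_m$. Once this is in hand, applying $\phi$ term-by-term to the monomial expansion from Lemma \ref{theLogSeriesLemma} and restricting to degree $m$ gives
\[
P_m(x,y) \;=\; \phi(P_m(x,y)) \;=\; \frac{1}{m}\!\!\sum_{\substack{(p_1,q_1,\ldots,p_k,q_k)\\ p_i+q_i\neq 0,\; \sum(p_i+q_i)=m}}\!\!\frac{(-1)^{k-1}}{k\, p_1!q_1!\cdots p_k!q_k!}\; x^{\circ p_1}\!\circ y^{\circ q_1}\!\circ \cdots \circ x^{\circ p_k}\!\circ y^{\circ q_k},
\]
which is manifestly a $\mathbb{Q}$-linear combination of (left-nested) commutators of $x$ and $y$.

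To establish $\phi(P_m) = P_m$, I would argue that $\log(e^xe^y)$ is a primitive element of the Hopf algebra of noncommutative formal power series $k\langle\!\langle x,y\rangle\!\rangle$ (with $x,y$ declared primitive). Since $\Delta$ is an algebra homomorphism and the elements $x\otimes 1$ and $1\otimes x$ commute, we have $\Delta(e^x) = e^{x\otimes 1+1\otimes x} = e^x\otimes e^x$, so $e^x$ is group-like, as is $e^y$ and hence $e^xe^y$. Taking $\log$ of a group-like element yields a primitive element, so each homogeneous component $P_m$ is primitive. I would then invoke (or give a short inductive proof of) Friedrichs's theorem in the tensor algebra: a primitive element of degree $m$ lies in the Lie subalgebra generated by $x,y$. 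By Proposition \ref{allIsLeftNestedProp} such an element is a linear combination of left-nested brackets, and Proposition \ref{phiIsBeautifulProp} says $\phi$ fixes each such bracket, giving $\phi(P_m)=P_m$.

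The main obstacle is the step ``primitive implies Lie'' over $\mathbb{Q}$. There is a classical slick proof via PBW or via the shuffle-Hopf pairing, but for the purposes of this paper I would prefer a proof that bypasses heavy Hopf-algebraic machinery so that the argument is easier to adapt to the characteristic $p$ setting promised later in the paper. A self-contained way is to show directly, by induction on degree, that if $Q$ is homogeneous of degree $n$ and primitive then $\phi(Q) = Q$, using the derivation property of $\Delta$ and the identity $x_1\circ x_2\circ\cdots\circ x_n = \sum_{i=1}^n (-1)^{i-1}\binom{n-1}{i-1} x_i x_{i-1}\cdots x_1 x_{i+1}\cdots x_n$ (or a similar Dynkin-Specht-Wever identity) to match coefficients. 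This inductive form is the argument I expect Dynkin to have used in \cite{dynkin2} and the one best suited to the characteristic-$p$ modifications that follow.
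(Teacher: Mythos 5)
Your proposal is correct in outline, but it does genuinely different work from the paper: the paper offers no argument at all for this proposition, deferring entirely to Dynkin's original article, whereas you sketch the standard modern proof. Your route --- $e^x$ and $e^y$ are group-like in $k\langle\langle x,y\rangle\rangle$ with $x,y$ primitive, hence $\log(e^xe^y)$ is primitive, hence each homogeneous component $P_m$ is primitive, hence (Friedrichs) a Lie element --- is sound, and it buys a self-contained treatment plus, via $\phi$, the explicit Dynkin formula for $P_m$ as a rational combination of left-nested brackets. Two caveats. First, your presentation is slightly circular as framed: you announce $\phi(P_m)=P_m$ as the ``key claim,'' but the way you establish it (primitive $\Rightarrow$ Lie, then Propositions \ref{allIsLeftNestedProp} and \ref{phiIsBeautifulProp}) already proves the proposition before $\phi$ enters; the identity $\phi(P_m)=P_m$ is then a corollary, which is exactly the order the paper follows in the proposition immediately after this one, so you should state the logical order that way rather than making the explicit formula carry the proof. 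Second, the entire mathematical content sits in the step ``primitive implies Lie,'' which you only invoke; citing Friedrichs's theorem is acceptable, but the ``self-contained'' fallback you sketch does not work as written --- the identity $x_1\circ\cdots\circ x_n=\sum_{i=1}^n(-1)^{i-1}\binom{n-1}{i-1}\,x_i x_{i-1}\cdots x_1 x_{i+1}\cdots x_n$ already fails at $n=3$, where the left side is $x_1x_2x_3-x_2x_1x_3-x_3x_1x_2+x_3x_2x_1$ while the right side is $x_1x_2x_3-2x_2x_1x_3+x_3x_2x_1$. If your goal is an argument that transfers to characteristic $p$, the cleaner path is to track denominators through a standard proof of Friedrichs or of Dynkin--Specht--Wever directly, which is in effect what the paper's Proposition \ref{BCHinCharpProp} does downstream of this proposition.
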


\begin{proof}
See \cite{dynkin3}.
\end{proof}

This last proposition, along with propositions
\ref{allIsLeftNestedProp} and \ref{phiIsBeautifulProp} give

\begin{proposition}
If $P_i(x,y)$ is any of the homogeneous polynomials in formula
\ref{homogenousBCHformula}, then $\phi(P_i(x,y)) = P_i(x,y)$.
\end{proposition}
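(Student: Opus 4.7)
The plan is to combine the three preceding propositions in a direct way; this proposition is really their intended payoff, and no new combinatorics is required.

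First, by the previous proposition (that each $P_i(x,y)$ is expressible as a rational linear combination of nested commutators in $x$ and $y$), I can write
\[ P_i(x,y) = \sum_\alpha c_\alpha B_\alpha, \]
where each $c_\alpha \in \mathbb{Q}$ and each $B_\alpha$ is some nested commutator expression in $x$ and $y$. Next, I would invoke Proposition \ref{allIsLeftNestedProp} to rewrite each $B_\alpha$ as a linear combination of left-nested bracket expressions; absorbing these coefficients, this yields
\[ P_i(x,y) = \sum_\beta d_\beta L_\beta \]
with each $L_\beta$ of the left-nested form $x_{\beta,1} \circ x_{\beta,2} \circ \ldots \circ x_{\beta,n_\beta}$ (where each $x_{\beta,k}$ is either $x$ or $y$).

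Now I apply $\phi$. Since $\phi$ is linear by definition, we have
\[ \phi(P_i(x,y)) = \sum_\beta d_\beta \, \phi(L_\beta). \]
Proposition \ref{phiIsBeautifulProp} tells us that $\phi$ fixes each left-nested bracket expression, so $\phi(L_\beta) = L_\beta$ for every $\beta$. Substituting back gives
\[ \phi(P_i(x,y)) = \sum_\beta d_\beta L_\beta = P_i(x,y), \]
which is what was to be shown.

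There is essentially no obstacle: the entire content of the proof lies in chaining the three preceding propositions together and exploiting the linearity of $\phi$. The only point worth noting for the reader is that the rewriting of nested brackets as left-nested ones is legitimate even in a not-necessarily-associative setting, which is precisely the content of Proposition \ref{allIsLeftNestedProp} and causes no denominators to appear beyond those already present in the $c_\alpha$.
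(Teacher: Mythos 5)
Your proposal is correct and is exactly the argument the paper intends: the paper offers no written proof at all, simply stating that the Dynkin decomposition into nested commutators, Proposition \ref{allIsLeftNestedProp}, and Proposition \ref{phiIsBeautifulProp} together give the result, and your write-up just chains those three facts with the linearity of $\phi$, as intended. No gaps.
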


This proposition gives us not only the assurance that each
$P_i(x,y)$ can be written as a rational linear combination of
bracket expressions in $x$ and $y$, but gives us an explicit
method for doing so.  For example, $P_2(x,y) = \frac{1}{2}xy -
\frac{1}{2} yx$, and if we were so dull as to not realize it, we
merely apply $\phi$ to yield
\[ P_2(x,y) = \phi(P_2(x,y)) = \frac{1}{4} x \circ y - \frac{1}{4} y \circ x
= \frac{1}{4} x \circ y + \frac{1}{4} x \circ y = \frac{1}{2} x
\circ y\]

With a view towards proving part (1) of theorem
\ref{TheMainTheorem}, we make the following simple observations
concerning all of this.

\begin{proposition}
\label{BCHinCharpProp} Let $p$ be a prime.

\begin{enumerate}

\item{If $m < p$, then $P_m(x,y)$ contains no coefficients whose
denominators are divisible by $p$.}

\item{If $m < p$, then $\phi(P_m(x,y))$ also contains no
coefficients whose denominators are divisible by $p$.}

\item{Let $X$ and $Y$ be members of a nilpotent matrix Lie algebra
over a field $k$ of characteristic $p$, of nilpotent order no
greater than $p$, and suppose that $X$ and $Y$ themselves are
nilpotent of order no greater than $p$. Then
\[ \log(e^X e^Y) = \sum_{i=1}^{p-1} P_i(x,y) \]}
\end{enumerate}

\end{proposition}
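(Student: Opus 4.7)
Parts (1) and (2) are direct combinatorial consequences of Lemma \ref{theLogSeriesLemma} and the definition of $\phi$. For (1), every coefficient of a monomial in $P_m(x,y)$ has the form $\frac{(-1)^{k-1}}{k\, p_1!\, q_1! \cdots p_k!\, q_k!}$ with $\sum_i (p_i + q_i) = m$ and each $p_i + q_i \geq 1$; hence $k \leq m$ and each $p_i, q_i \leq m$, all less than $p$, so the denominator is coprime to $p$. For (2), $\phi$ sends a degree-$m$ monomial $x_1 \cdots x_m$ to $\frac{1}{m}\, x_1 \circ x_2 \circ \cdots \circ x_m$, and expansion of a left-nested commutator produces an integer linear combination of monomials. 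Thus $\phi(P_m(x,y))$ differs from $P_m(x,y)$ by the factor $\frac{1}{m}$ followed by an integer transformation; since $\frac{1}{m} \in \mathbb{Z}_{(p)}$ whenever $m < p$, the claim follows from (1).

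For part (3), my plan is a reduction-mod-$p$ argument using a universal nilpotent model. Let $\liefont{f}$ be the free nilpotent $\mathbb{Z}_{(p)}$-Lie algebra of class $p-1$ on two generators $\xi, \eta$. By parts (1) and (2), together with Dynkin's theorem that each $P_m(x,y)$ is a Lie polynomial of bracket-degree $m$, the partial sum $\Sigma := \sum_{m=1}^{p-1} P_m(\xi, \eta)$ is a well-defined element of $\liefont{f}$: its coefficients are $p$-integral, and its bracket-depth is at most $p-1$. In $\liefont{f} \otimes \mathbb{Q}$ every higher term $P_m(\xi, \eta)$ for $m \geq p$ is a bracket of length $\geq p$ and so vanishes, whence the characteristic-zero BCH identity reduces to $\log(e^\xi e^\eta) = \Sigma$ in the degree-completed $\mathbb{Q}$-enveloping algebra. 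Because $\Sigma$ has $\mathbb{Z}_{(p)}$-coefficients, this identity descends to the mod-$p$ reduction; any $X, Y$ satisfying the hypotheses then induce, via the universal property of $\liefont{f}$, a Lie-algebra homomorphism $\liefont{f} \to \liefont{g}$ whose specialization yields the claim.

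The main obstacle is showing that the formal series $\log(e^X e^Y) = \sum_{k \geq 1} \frac{(-1)^{k-1}}{k}(e^X e^Y - 1)^k$ actually terminates in characteristic $p$ before $k$ reaches $p$, so that the forbidden coefficient $\frac{1}{p}$ never appears. I would handle this by observing that $e^X e^Y - 1$ lies in the augmentation ideal of the associative subalgebra generated by $X, Y$, and that under the nilpotency hypotheses this ideal has nilpotency order at most $p$: the same filtration by lower central series that forces $P_m(X, Y) = 0$ for $m \geq p$ also kills $(e^X e^Y - 1)^p$. Once this termination is secured, the log-series truncates in a way that is compatible with the universal identity, and the specialization argument goes through.
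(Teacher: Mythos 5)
Parts (1) and (2) of your proposal coincide with the paper's argument: read the coefficients off Lemma \ref{theLogSeriesLemma}, note that $m<p$ forces $k\leq m<p$ and all $p_i,q_i<p$, and observe that $\phi$ only introduces an extra factor $\frac{1}{m}$ followed by a $\mathbb{Z}$-linear rewriting. No issues there. For part (3) your universal $\mathbb{Z}_{(p)}$-model is a more structured route than the paper's (the paper simply kills $P_m(X,Y)$ for $m\geq p$ because these are brackets of length $\geq p$ in a Lie algebra of nilpotency class $<p$), and in principle it is the right way to make the "reduction mod $p$" honest.

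However, the step you yourself identify as the main obstacle is resolved incorrectly, and this is a genuine gap. You claim that the nilpotency hypotheses force the augmentation ideal of the associative algebra generated by $X$ and $Y$ to be nilpotent of order at most $p$, "by the same filtration by lower central series" that kills long brackets. This conflates Lie nilpotency with associative nilpotency: the lower central series controls iterated \emph{brackets}, not iterated \emph{products}. Concretely, take $X=J_p\otimes I_p$ and $Y=I_p\otimes J_p$ acting on a $p^2$-dimensional space, where $J_p$ is the nilpotent Jordan block of size $p$. Then $[X,Y]=0$, so the Lie algebra they span is abelian (class $1\leq p-1$), and $X^p=Y^p=0$, so every hypothesis of part (3) holds; yet $X^{p-1}Y^{p-1}=J_p^{p-1}\otimes J_p^{p-1}\neq 0$ is a nonzero product of $2(p-1)\geq p$ elements of the augmentation ideal. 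So neither $(e^Xe^Y-1)^p=0$ nor the factorization of your homomorphism $\liefont{f}\to\liefont{g}$ through the degree-$(p-1)$ truncation of the enveloping algebra follows from the stated hypotheses (note also that $e^\xi$ is \emph{not} $p$-integral in the full enveloping algebra of $\liefont{f}$, since $\xi^j\neq 0$ for all $j$ by PBW, so your descent step already needs this truncation). What actually rescues the argument in every application the paper makes of this proposition is a hypothesis nobody states: the matrices live in $\liefont{gl}_d$ with $d\leq p$ and span a Lie algebra of nilpotent matrices, so by Engel's theorem they are simultaneously strictly upper triangularizable and \emph{associative} products of length $\geq d\geq$ (no, $\leq$) $p$ vanish — more precisely, products of length $\geq d$ vanish and $d\leq p$. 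You should either add "all associative products of length $\geq p$ of elements of the Lie algebra vanish" as a hypothesis (verifying it via Engel in the applications), or reinterpret $\log(e^Xe^Y)$ through the degree-regrouped Dynkin form; as written, your argument does not close.
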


\begin{proof}
From the description of the series for $\log(e^x e^y)$ given in
\ref{theLogSeriesLemma} we glean
\[ P_m(x,y) = \sum \frac{(-1)^{k-1}}{k} \frac{1}{p_1!q_1!\ldots p_k!q_k!} x^{p_1}
y^{q_1} \ldots x^{p_k} y^{q_k} \] where the summation is take over
all tuples of non-negative integers, of whatever length, with the
property that $p_i + q_i > 0$, and that $\sum_{i,j} p_i + q_i =
m$.  As $m < p$, clearly we can have no tuple with some $p_i$ or
$q_i$ divisible by $p$, and neither can we have any $k \geq p$,
and claim (1) is established.

For claim (2), simply realize that the only difference in the
coefficients for $P_m(x,y)$ and the equivalent expression
$\phi(P_m(x,y))$ is a multiplication by a factor of $\frac{1}{m}$
for each coefficient, which again cannot contribute any
denominators divisible by $p$.

For (3), since $X$ and $Y$ are nilpotent of order $\leq p$, both
the series $e^X$ and $e^Y$ are well-defined.  Further, since $X$
and $Y$ belong to a nilpotent Lie algebra of order $\leq p$, any
bracket expressions among $X$ and $Y$ of length $p$ or more
vanish; accordingly, as $P_m(X,Y) = \phi(P_m(X,Y))$ consists
solely of bracket expressions of length $m$, we conclude that
$P_m(X,Y)$ vanishes for all $m \geq p$, whence $\log(e^X e^Y) =
\sum_{i=1}^{p-1} P_i(x,y)$.

\end{proof}


\section{The Main Theorem: Sufficiency}

\label{sufficiencySection}

In this section we prove part (1) of theorem \ref{TheMainTheorem}.

To start, we would like to know when a given Lie algebra
homomorphism $\phi:\liefont{u}_n \rightarrow \liefont{gl}_d$, over
whatever field, can be lifted to a representation of $U_n$.  For
this, we ask how much of the Baker-Campbell-Hausdorff formula do
we need to actually be true in this setting. The following is an
obvious adaptation of analogous facts for characteristic zero
fields.

\begin{proposition}
\label{BCHcaresProp} Let $k$ be any field, and let
$\phi:\liefont{u}_n \rightarrow \liefont{gl}_d$ be a Lie algebra
homomorphism. Suppose that
\begin{enumerate}
\item{The series $e^{\phi(X)}$ ``makes sense'' for all $X \in
\liefont{u}_n$}

\item{The series $\log(g)$ ``makes sense'' for all $g \in U_n$,
and $\log(g)$ is a member of $\liefont{u}_n$}

\item{For all $X,Y \in \liefont{u}_n$, $\log(e^X e^Y)$ exists,
denoted as $\text{BCH}(X,Y)$.  Further, $\text{BCH}(X,Y)$ can be
written \underline{uniformly} (the same for all $X$ and $Y$ in
$\liefont{u}_n$) as a \underline{finite} linear combination of
brackets of $X$ and $Y$, brackets of brackets of $X$ and $Y$,
etc.}

\item{For all $X,Y \in \liefont{u}_n$,
$\log(e^{\phi(X)}e^{\phi(Y)})$ exists, and can be written
\underline{uniformly} as $\text{BCH}(\phi(X),\phi(Y))$ as in (3)}

\end{enumerate}

Then the formula $\Phi(g) \stackrel{\text{\emph{defn}}}{=}
e^{\phi(\log(g))}$ defines a $d$-dimensional representation of
$U_n$ over $k$.

\end{proposition}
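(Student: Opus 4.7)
The plan is to verify that $\Phi(g) \stackrel{\text{def}}{=} e^{\phi(\log(g))}$ satisfies the two things required of a representation: it sends the identity to the identity, and it is multiplicative. Polynomiality (and hence that $\Phi$ corresponds to a comodule map in the sense of (\ref{comodDiagram1}), (\ref{comodDiagram2})) will follow for free from assumptions (1) and (2), since $\log: U_n \to \liefont{u}_n$ is a polynomial map into $\liefont{u}_n$ by hypothesis, $\phi$ is linear, and $e^{\phi(\log(g))}$ is a finite polynomial expression in the coordinates $x_{ij}$ of $g$. Sending the identity to the identity is immediate: $\log(1) = 0$, so $\Phi(1) = e^{\phi(0)} = e^0 = \text{Id}$.

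The heart of the proof is establishing multiplicativity, $\Phi(gh) = \Phi(g)\Phi(h)$, as a polynomial identity in the entries of $g$ and $h$. Fix $g, h \in U_n$ and set $X = \log(g)$, $Y = \log(h)$, so that $g = e^X$ and $h = e^Y$. Then I would chain the hypotheses in the following order. By (3), $\log(gh) = \log(e^X e^Y) = \text{BCH}(X,Y)$, expressed as a specific finite rational linear combination of iterated brackets of $X$ and $Y$ that is uniform in $X,Y$. Since $\phi$ is linear and a Lie algebra homomorphism, applying $\phi$ term-by-term to this expression yields
\[
\phi(\log(gh)) \;=\; \phi(\text{BCH}(X,Y)) \;=\; \text{BCH}(\phi(X),\phi(Y)),
\]
the same uniform finite bracket expression, now evaluated at $\phi(X),\phi(Y) \in \liefont{gl}_d$. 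By (4), this last expression equals $\log(e^{\phi(X)} e^{\phi(Y)})$. Exponentiating both sides and using the formal identity $e^{\log(M)} = M$ (valid whenever $M - 1$ is nilpotent, which is guaranteed here because $\phi(X),\phi(Y)$ are nilpotent and so $e^{\phi(X)} e^{\phi(Y)} - 1$ is nilpotent), we obtain
\[
\Phi(gh) \;=\; e^{\phi(\log(gh))} \;=\; e^{\phi(X)} e^{\phi(Y)} \;=\; \Phi(g)\Phi(h).
\]

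The only step that genuinely requires care is the \emph{uniformity} clause in hypotheses (3) and (4): it is what allows us to transport a single bracket expression across $\phi$ and to interpret the resulting equality as an identity of polynomials in the coordinates $x_{ij}$, not merely a pointwise equation. Without uniformity one could imagine the BCH expansion depending on $g, h$ in an uncontrolled way and the equality failing as a morphism of varieties even if it held pointwise. So the expected main obstacle is simply bookkeeping: confirming that the finite expression produced by the uniform BCH formula on the Lie algebra side of $U_n$ is literally the same bracket polynomial that appears on the $\liefont{gl}_d$ side, so that the Lie homomorphism property of $\phi$ can be applied coefficient by coefficient. Once this is observed, the proof reduces to the four-line chain of equalities above, and $\Phi$ is a valid $d$-dimensional representation.
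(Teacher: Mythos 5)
Your proposal is correct and follows essentially the same chain of equalities as the paper's proof: set $X=\log(g)$, $Y=\log(h)$, pass $\log(gh)$ through BCH via (3), push the uniform bracket expression through the Lie homomorphism $\phi$, and undo it on the $\liefont{gl}_d$ side via (4). The extra remarks on $\Phi(1)=\mathrm{Id}$, polynomiality, and $e^{\log(M)}=M$ for unipotent $M$ are harmless elaborations of steps the paper leaves implicit.
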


\begin{proof}
Let $g,h \in U_n$, and by (2) let $X = \log(g)$, $Y = \log(h)$.
Then
\begin{equation*}
\begin{split}
\Phi(gh) &= \Phi(e^X e^Y) \\
&= \Phi(e^{\text{BCH}(X,Y)}) \quad \text{ by (3) } \\
&= e^{\phi(\text{BCH}(X,Y))} \quad \text{ by definition of $\Phi$
}
\\
&=e^{\text{BCH}(\phi(X),\phi(Y))} \quad \text{ because $\phi$
preserves
brackets } \\
&=e^{\phi(X)} e^{\phi(Y)} \quad \text{ by (4) } \\
&= \Phi(e^X) \Phi(e^Y) \quad \text{ by definition of $\Phi$ } \\
&=\Phi(g)\Phi(h) \\
\end{split}
\end{equation*}

\end{proof}

\begin{theorem}
\label{theCheckListTheorem} Let $k$ be a field of characteristic
$p>0$, and suppose $p \geq \text{max}(n,d)$.  Let
$\phi:\liefont{u}_n \rightarrow \liefont{gl}_d$ be a Lie algebra
homomorphism such that $\phi(X)$ is a nilpotent matrix for each $X
\in \liefont{u}_n$.  Then the formula
\[ \Phi(g) = e^{\phi(\log(g))} \]
defines a $d$-dimensional representation of $U_n$.

\end{theorem}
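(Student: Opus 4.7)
The plan is to apply Proposition \ref{BCHcaresProp}, which reduces the theorem to checking four facts: that $e^{\phi(X)}$ is well-defined for $X \in \liefont{u}_n$, that $\log(g)$ is well-defined and lands in $\liefont{u}_n$ for $g \in U_n$, and that the Baker-Campbell-Hausdorff series exists uniformly and in bracket form on both $\liefont{u}_n$ and its image $\phi(\liefont{u}_n) \subseteq \liefont{gl}_d$.

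Conditions (1) and (2) are essentially immediate. By hypothesis $\phi(X)$ is a nilpotent $d \times d$ matrix, so $\phi(X)^d = 0$ and $e^{\phi(X)} = \sum_{k=0}^{d-1} \phi(X)^k/k!$ is a finite sum whose denominators $k!$ for $k < d \leq p$ are units in $k$. For $g \in U_n$, the matrix $g-1$ is strictly upper triangular so $(g-1)^n = 0$, whence $\log(g) = \sum_{k=1}^{n-1}(-1)^{k-1}(g-1)^k/k$ is a finite sum whose denominators $k$ with $k < n \leq p$ are units; the result is strictly upper triangular, hence lies in $\liefont{u}_n$.

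For conditions (3) and (4), I would invoke Proposition \ref{BCHinCharpProp}(3). The Lie algebra $\liefont{u}_n$ is nilpotent of class $n-1$, so every iterated bracket of length $\geq n$ in $\liefont{u}_n$ is zero, and its elements are nilpotent matrices of order $\leq n \leq p$. Proposition \ref{BCHinCharpProp}(3) therefore gives $\log(e^X e^Y) = \sum_{m=1}^{p-1} P_m(X,Y)$ for all $X, Y \in \liefont{u}_n$. Since each $P_m$ equals $\phi_{\mathrm{Dynkin}}(P_m)$, a rational linear combination of left-nested brackets of length $m$, and all brackets of length $\geq n$ vanish in $\liefont{u}_n$, the formula collapses to a uniform expression $\mathrm{BCH}(X,Y) = \sum_{m=1}^{n-1} P_m(X,Y)$ in bracket form; by Proposition \ref{BCHinCharpProp}(1)-(2), none of the surviving rational coefficients have denominators divisible by $p$. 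The same reasoning applies verbatim to $\phi(X), \phi(Y)$: the image $\phi(\liefont{u}_n)$ is a homomorphic image of $\liefont{u}_n$, hence a nilpotent Lie subalgebra of $\liefont{gl}_d$ of class $\leq n-1 < p$, and $\phi(X)$ is nilpotent of order $\leq d \leq p$. Thus the same finite bracket expression $\mathrm{BCH}(\phi(X), \phi(Y))$ computes $\log(e^{\phi(X)} e^{\phi(Y)})$.

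With all four conditions of Proposition \ref{BCHcaresProp} verified, it concludes that $\Phi(g) = e^{\phi(\log(g))}$ is a representation of $U_n$. The only real obstacle is confirming that the nilpotency hypotheses of Proposition \ref{BCHinCharpProp}(3) are met on both sides, since that is what simultaneously truncates the BCH series and keeps every coefficient $p$-integral; the bound $p \geq \max(n,d)$ is precisely what ensures this on both $\liefont{u}_n$ (via $n \leq p$) and $\phi(\liefont{u}_n) \subset \liefont{gl}_d$ (via $d \leq p$ for the individual matrices and the inherited class $\leq n-1$ for the bracket truncation).
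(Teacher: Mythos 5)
Your proposal is correct and follows essentially the same route as the paper: verify the four conditions of Proposition \ref{BCHcaresProp}, using nilpotency together with $p \geq \max(n,d)$ for conditions (1) and (2), and Proposition \ref{BCHinCharpProp}(3) applied to both $\liefont{u}_n$ and its nilpotent image $\phi(\liefont{u}_n)$ for conditions (3) and (4). The extra detail you supply about the truncation of the BCH series and the $p$-integrality of the surviving coefficients is consistent with the paper's argument.
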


\begin{proof}
We shall go through the checklist of proposition
\ref{BCHcaresProp} to see that they are all satisfied.

(1): The expression $e^{\phi(X)}$ makes sense since $\phi(X)$ is
nilpotent, of order no greater than $d \leq p$ (that is, the
series expansion for $e^{\phi(X)}$ terminates before getting to
see denominators divisible by $p$).

(2): Every $g \in U_n$ is unipotent, whence $g-1$ is nilpotent of
order no greater than $n \leq p$, whence the series $\log(g) =
\sum_{k=1}^{n-1} \frac{(-1)^{k-1}}{k} (g-1)^k$ likewise terminates
before denominators divisible by $p$ occur.

(3): $\liefont{u}_n$ is a nilpotent Lie algebra of order $n$, and
each $X \in \liefont{u}_n$ is itself nilpotent.  Apply part 3.~of
proposition \ref{BCHinCharpProp}.

(4): As $\phi$ is a Lie algebra homomorphism, its image is also a
nilpotent Lie algebra, of nilpotent order no greater than $n \leq
p$.  Again apply part 3.~of proposition \ref{BCHinCharpProp}.

\end{proof}

Recall that, if $M(x_1, \ldots, x_n)$ is a matrix with entries
taken from the algebra $k[x_1, \ldots, x_n]$ in the commuting
indeterminates $x_1, \ldots, x_n$, then $M^{[m]}$ denotes the
matrix $M(x_1^m, \ldots, x_n^m)$.

\begin{lemma}
\label{pDoesItsThingLemma}

Suppose $(a_{ij})$ is the matrix formula for a representation of
$U_n$ over a field $k$ of characteristic $p$, with comodule map $V
\stackrel{\rho}{\longrightarrow} V \otimes A_n$.  Then
$(a_{ij})^{[p]}$ is also a representation of $U_n$, with comodule
map given by he composition $V \stackrel{\rho}{\longrightarrow} V
\otimes A_n \stackrel{1 \otimes [p]}{\longrightarrow} A_n$, where
$A_n \stackrel{[p]}{\longrightarrow} A_n$ is the linear map which
carries each monomial to its $p^{th}$ power.

\end{lemma}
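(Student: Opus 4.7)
The plan is to recognize the lemma as a formal consequence of the fact that $[p]\colon A_n \to A_n$ is itself a coalgebra endomorphism. Once that is established, writing $\tilde\rho = (1\otimes[p])\circ\rho$, the comodule diagrams \ref{comodDiagram1} and \ref{comodDiagram2} for $\tilde\rho$ follow by inserting $[p]$ into the diagrams for $\rho$ and invoking $\Delta\circ[p] = ([p]\otimes[p])\circ\Delta$ and $\varepsilon\circ[p] = \varepsilon$ in the appropriate slots.

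So first I would verify that $[p]$ is a $k$-algebra homomorphism. On monomials this is immediate ($x^M\cdot x^N = x^{M+N}$ sent to $x^{p(M+N)} = x^{pM}\cdot x^{pN}$), and one extends by $k$-linearity; this step actually does not require any assumption on the characteristic. Next I would check the compatibility with the coproduct on each generator $x_{ij}$. Because $\Delta$ is an algebra map, $\Delta(x_{ij}^p) = \Delta(x_{ij})^p$, and the three summands $1\otimes x_{ij}$, $x_{ik}\otimes x_{kj}$, and $x_{ij}\otimes 1$ all commute pairwise in $A_n\otimes A_n$. Here is where the hypothesis that $k$ has characteristic $p$ enters: the Frobenius identity $(a_1+\ldots+a_r)^p = a_1^p+\ldots+a_r^p$ for commuting elements yields
\[
\Delta(x_{ij})^p = 1\otimes x_{ij}^p + \sum_{k=i+1}^{j-1} x_{ik}^p\otimes x_{kj}^p + x_{ij}^p\otimes 1 = ([p]\otimes[p])\Delta(x_{ij}).
\]
Since $\Delta$, $[p]$, and $[p]\otimes[p]$ are all algebra maps and $A_n$ is generated by the $x_{ij}$, it follows that $\Delta\circ[p] = ([p]\otimes[p])\circ\Delta$ on all of $A_n$. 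The counit compatibility $\varepsilon\circ[p]=\varepsilon$ is trivial on generators ($\varepsilon(x_{ij}^p) = 0 = \varepsilon(x_{ij})$) and extends multiplicatively.

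With these two identities in hand, the comodule axioms for $\tilde\rho$ are just diagram chases. For the coassociativity square, compute
\[
(\tilde\rho\otimes 1)\circ\tilde\rho = (1\otimes[p]\otimes[p])\circ(\rho\otimes 1)\circ\rho = (1\otimes[p]\otimes[p])\circ(1\otimes\Delta)\circ\rho = (1\otimes\Delta)\circ(1\otimes[p])\circ\rho = (1\otimes\Delta)\circ\tilde\rho,
\]
where the middle equality is the comodule axiom for $\rho$ and the third is the compatibility of $[p]$ with $\Delta$. The counit triangle reduces similarly to the one for $\rho$ via $\varepsilon\circ[p]=\varepsilon$.

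There is no real obstacle here; the only substantive content is the Frobenius identity that makes $[p]$ respect $\Delta$, and every other verification is routine. Finally, to connect back to the matrix formula, one notes that if $\rho(e_j) = \sum_i e_i\otimes a_{ij}$ then $\tilde\rho(e_j) = \sum_i e_i\otimes[p](a_{ij})$, and since $[p]$ sends each monomial $x^M$ occurring in $a_{ij}$ to $x^{pM}$, the matrix of $\tilde\rho$ in the basis $\{e_i\}$ is precisely $(a_{ij})^{[p]}$, as required.
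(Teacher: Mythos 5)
Your proof is correct and follows essentially the same route as the paper: reduce everything to the identity $\Delta\circ[p] = ([p]\otimes[p])\circ\Delta$ (plus the counit compatibility) and then chase the comodule diagrams. The only difference is that you explicitly carry out the generator-by-generator Frobenius verification that the paper leaves to the reader, which is a welcome addition rather than a deviation.
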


\begin{proof}

Consider
\begin{diagram}
V & \rTo^\rho & V \otimes A_n & \rTo^{1 \otimes [p]} & V \otimes A \\
\dTo^\rho & & \dTo_{1 \otimes \Delta} & & \dTo_{1 \otimes \Delta} \\
V \otimes A_n & \rTo_{\rho \otimes 1} & V \otimes A_n \otimes A_n  & \rTo^{1 \otimes [p] \otimes [p]} & V \otimes A_n \otimes A_n\\
\dTo^{1 \otimes [p]} & & \dTo_{1 \otimes 1 \otimes [p]} & &
\dTo_{1} \\
V \otimes A_n & \rTo_{\rho \otimes 1} & V \otimes A_n \otimes A_n
& \rTo_{1 \otimes [p] \otimes 1} & V \otimes A \otimes A \\
\end{diagram}
Commutativity of the outermost rectangle is the assertion that the
map $V \stackrel{\rho}{\longrightarrow} V \otimes A_n \stackrel{1
\otimes [p]}{\longrightarrow} V \otimes A_n$ is a valid
$A_n$-comodule structure on $V$, and is what we are trying to
prove. Commutativity of the top left square follows since $\rho$
is a comodule map, and commutativity of the bottom left and bottom
right squares is obvious.  What remains to check is the top right
square, i.e.
\begin{diagram}
A_n & \rTo^{[p]} & A_n \\
\dTo^{\Delta} & & \dTo_{\Delta} \\
A_n \otimes A_n &  \rTo_{[p] \otimes [p]} & A_n \otimes A_n \\
\end{diagram}
which the reader can verify by hand.

\end{proof}

\begin{theorem}
\label{proofOfSufficiencyTheorem} Suppose that $p \geq
\text{max}(n,d)$, and let $k$ be a field of characteristic $p$.
Let $\phi_0, \ldots, \phi_m$ be a collection of Lie algebra
representations
 $\liefont{u}_n \rightarrow \liefont{gl}_d$ such that
\begin{enumerate}
\item{$\phi_i(X)$ is a nilpotent matrix for all $i$ and all $X \in
\liefont{u}_n$}

\item{For all $i \neq j$ and $X,Y \in \liefont{u}_n$, $\phi_i(X)$
commutes with $\phi_j(Y)$}

\end{enumerate}
For $g \in U_n$ and $0 \leq i \leq m$, define
\[ \Phi_i(g) = e^{\phi_i(\log(g))} \]
Then the formula
\[ \Phi(g) = \Phi_0(g) \Phi_1(g)^{[p]} \ldots \Phi_m(g)^{[p^m]} \]
defines a representation of $U_n$ over $k$.

\end{theorem}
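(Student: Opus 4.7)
The plan is to reduce the statement, via Theorem~\ref{theCheckListTheorem} and Lemma~\ref{pDoesItsThingLemma}, to checking a single commutation relation among the factors. First I would apply Theorem~\ref{theCheckListTheorem} to each $\phi_i$ individually: since $p \geq \max(n,d)$ and each $\phi_i:\liefont{u}_n \to \liefont{gl}_d$ is a Lie algebra homomorphism with nilpotent image, every $\Phi_i(g) = e^{\phi_i(\log g)}$ is already a valid $d$-dimensional representation of $U_n$. Then, iterating Lemma~\ref{pDoesItsThingLemma} a total of $i$ times, I conclude that each twisted factor $\Phi_i(g)^{[p^i]}$ is likewise a representation of $U_n$.

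The heart of the argument is to show that for $i \neq j$ and any $g, h \in U_n$, the matrices $\Phi_i(g)^{[p^i]}$ and $\Phi_j(h)^{[p^j]}$ commute. By hypothesis (2), $\phi_i(X)$ commutes with $\phi_j(Y)$ for all $X, Y \in \liefont{u}_n$, so every element of the associative subalgebra of $\liefont{gl}_d$ generated by $\phi_i(\liefont{u}_n)$ commutes with every element of the subalgebra generated by $\phi_j(\liefont{u}_n)$. Viewing $\Phi_i(g)$ as a matrix of polynomials in the entries of $g$ (with coefficients in the first subalgebra) and $\Phi_j(h)$ as a matrix of polynomials in the entries of $h$ (with coefficients in the second), the commutativity $\Phi_i(g) \Phi_j(h) = \Phi_j(h) \Phi_i(g)$ holds as a formal polynomial identity in disjoint sets of variables. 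Substituting the $p^i$-th and $p^j$-th powers of the respective variables preserves this identity, which yields $\Phi_i(g)^{[p^i]} \Phi_j(h)^{[p^j]} = \Phi_j(h)^{[p^j]} \Phi_i(g)^{[p^i]}$.

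Finally, to verify $\Phi(gh) = \Phi(g) \Phi(h)$, I would expand
\[ \Phi(gh) = \prod_{i=0}^m \Phi_i(gh)^{[p^i]} = \prod_{i=0}^m \bigl(\Phi_i(g) \Phi_i(h)\bigr)^{[p^i]} = \prod_{i=0}^m \Phi_i(g)^{[p^i]} \Phi_i(h)^{[p^i]} \]
using that each $\Phi_i$ is a homomorphism (from the first reduction) and that $(-)^{[p^i]}$ distributes over matrix products since it is merely a variable substitution. Then I would apply the commutation relations from the previous paragraph to pull every $\Phi_j(g)^{[p^j]}$ to the left of every $\Phi_i(h)^{[p^i]}$ whenever $i < j$, obtaining $\Phi_0(g) \Phi_1(g)^{[p]} \cdots \Phi_m(g)^{[p^m]} \cdot \Phi_0(h) \Phi_1(h)^{[p]} \cdots \Phi_m(h)^{[p^m]} = \Phi(g) \Phi(h)$.

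The main obstacle will be the commutativity argument of the middle paragraph, specifically the passage from Lie-algebra commutation in $\liefont{gl}_d$ through exponentiation and through the variable-power substitution $(-)^{[p^i]}$; once that polynomial identity is in hand, the rest of the proof is purely formal rearrangement.
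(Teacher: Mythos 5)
Your proposal is correct and follows essentially the same route as the paper's proof: Theorem~\ref{theCheckListTheorem} makes each $\Phi_i$ a representation, Lemma~\ref{pDoesItsThingLemma} handles the twists, and the commutativity of the factors (coming from hypothesis (2)) makes the product a representation. You merely spell out in detail the two steps the paper states as one-liners, namely that commutativity survives exponentiation and the variable substitution $(-)^{[p^i]}$, and that a commuting product of representations is again a representation.
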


\begin{proof}

Theorem \ref{theCheckListTheorem} guarantees that each $\Phi_i$ is
a representation, and the previous lemma says that so is
$\Phi_i(g)^{[p]}$.  As $\log(g)$ is an element of $\liefont{u}_n$
and as $\phi_i,\phi_j$ commute when $i \neq j$, so do $\Phi_i(g)$
and $\Phi_j(g)$, and hence so do $\Phi_i(g)^{[p^i]}$ and
$\Phi_j(g)^{[p^j]}$.  Any commuting product of representations of
an algebraic group is again a representation, whence $\Phi$ is a
representation of $U_n$.
\end{proof}

Part (1) of theorem \ref{TheMainTheorem} is now proved.  The main
theorem of this paper is now proved.

\section{Analogies with direct products in characteristic zero}

The representation theory of direct products of algebraic groups,
over any field, can be summed up as follows.

\begin{theorem}
Let $k$ be any field, $G,H$ algebraic groups over $k$, $A$ the
representing Hopf algebra of $G$, $B$ the representing Hopf
algebra of $H$.

\begin{enumerate}

\item{Let $(c_{ij})$ be the matrix formula for a representation of
$G \times H$, with $c_{ij} \in A \otimes B$, and denote by
$(a_{ij})$ and $(b_{ij})$ the induced representations on $G$ and
$H$ respectively via the canonical embeddings $G,H \rightarrow G
\times H$.  Then $(c_{ij})$ can be factored into the commuting
product
\[ (a_{ij} \otimes 1)(1 \otimes b_{ij}) \]
}

\item{Any commuting pair of representations for $G$ and $H$ yields
a representation for $G \times H$ according to the above formula.}

\end{enumerate}

Further, $\phi:V \rightarrow W$ is a morphism for the
representations $V$ and $W$ for $G \times H$ if and only if it is
both a morphism between $V$ and $W$ restricted to $G$, and for $V$
and $W$ restricted to $H$.

\end{theorem}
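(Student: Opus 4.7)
The plan is to reduce everything to the comodule axioms, to the structure of $A \otimes B$ as the representing Hopf algebra of $G \times H$ (with coproduct $\Delta_{A \otimes B}(a \otimes b) = \sum (a_{(1)} \otimes b_{(1)}) \otimes (a_{(2)} \otimes b_{(2)})$, when $(A \otimes B) \otimes (A \otimes B)$ is identified with $A \otimes B \otimes A \otimes B$), and to the single group-theoretic fact that $(g, 1_H)$ and $(1_G, h)$ always commute inside $G \times H$.

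For part (1), the canonical embeddings correspond on the Hopf algebra side to the projections $\mathrm{id}_A \otimes \varepsilon_B$ and $\varepsilon_A \otimes \mathrm{id}_B$, so the restricted matrix entries are $a_{ij} = (\mathrm{id}_A \otimes \varepsilon_B)(c_{ij})$ and $b_{ij} = (\varepsilon_A \otimes \mathrm{id}_B)(c_{ij})$. I would extract the factorization $c_{ij} = \sum_k a_{ik} \otimes b_{kj}$ by applying the ``outside-inside'' map $\mathrm{id}_A \otimes \varepsilon_B \otimes \varepsilon_A \otimes \mathrm{id}_B$ from $A \otimes B \otimes A \otimes B$ to $A \otimes B$ to both sides of the comodule identity $\Delta_{A \otimes B}(c_{ij}) = \sum_k c_{ik} \otimes c_{kj}$. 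On the right this collapses immediately to $\sum_k a_{ik} \otimes b_{kj}$; on the left, the counit axioms $(\mathrm{id} \otimes \varepsilon) \Delta = \mathrm{id} = (\varepsilon \otimes \mathrm{id}) \Delta$ in $A$ and in $B$ together reduce the expression to $c_{ij}$ itself. For the commuting assertion I would pass to the functor-of-points picture: because $(g, 1)$ and $(1, h)$ commute in $G \times H$, their images $\rho(g, 1) = (a_{ij})(g)$ and $\rho(1, h) = (b_{ij})(h)$ commute as scalar matrices, which is precisely the assertion that $(a_{ij} \otimes 1)$ and $(1 \otimes b_{ij})$ commute inside $M_n(A \otimes B)$.

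For part (2), I would set $c_{ij} := \sum_k a_{ik} \otimes b_{kj}$ and verify the comodule axioms directly. The counit axiom $\varepsilon_{A \otimes B}(c_{ij}) = \delta_{ij}$ is immediate from $\varepsilon_A(a_{ik}) = \delta_{ik}$ and $\varepsilon_B(b_{kj}) = \delta_{kj}$. For the coproduct axiom, expanding $\Delta_A(a_{ik})$ and $\Delta_B(b_{kj})$ via the representation identities for the individual factors yields a quadruple tensor sum on the $\Delta_{A \otimes B}(c_{ij})$ side whose summands differ from those of $\sum_m c_{im} \otimes c_{mj}$ only by how the middle two tensor factors are indexed. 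This is where I expect the real (though entirely formal) work to lie: the swap has to be legitimized using the commuting hypothesis. Concretely, the identity $\sum_k a_{lk} \otimes b_{kl'} = \sum_k a_{kl'} \otimes b_{lk}$ in $A \otimes B$ --- the Hopf-algebraic transcription of $(a_{ij})(g)(b_{ij})(h) = (b_{ij})(h)(a_{ij})(g)$ for all $g, h$ --- applied to the inner sum over $k$ carries one side of the coproduct axiom onto the other.

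For the morphism characterization, the forward direction is mere restriction along the canonical embeddings $G, H \hookrightarrow G \times H$. For the converse, I would apply part (1) to both source and target representations, write each as a commuting product $(a_{ij} \otimes 1)(1 \otimes b_{ij})$, and observe that the intertwining condition for the product representations splits cleanly into the conjunction of the intertwining conditions for the $(a_{ij})$'s and the $(b_{ij})$'s, since the $A$-valued and $B$-valued entries of the factored matrices occupy disjoint tensor slots of $A \otimes B$.
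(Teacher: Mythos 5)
Your argument is correct and complete in outline: the extraction of the factorization via $\mathrm{id}_A\otimes\varepsilon_B\otimes\varepsilon_A\otimes\mathrm{id}_B$ together with the counit axioms, the use of the commutation identity $\sum_k a_{lk}\otimes b_{km}=\sum_k a_{km}\otimes b_{lk}$ to reconcile the middle tensor slots in the coassociativity check, and the splitting of the intertwining condition all go through exactly as you describe. The paper itself offers no proof here (it defers entirely to chapter 11 of the author's dissertation), and your comodule-theoretic argument is the standard one that such a reference would contain, so there is nothing substantive to contrast.
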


\begin{proof}
See for example chapter 11 of \cite{MyDissertation}.
\end{proof}

On the other hand, we have

\begin{theorem}
Let $k$ be a field of characteristic $p$, and suppose $p \geq
\text{max}(n,2d)$.  Let $\Phi, \Psi$ be
 representations of $U_n$ on the vector spaces
$V,W$ respectively, of dimension no greater than $d$, and so
necessarily of the form
\[ \Phi(g) = \Phi_0(g) \Phi_1(g) \ldots \Phi_m(g) \]
\[ \Psi(g) = \Psi_0(g) \Psi_1(g) \ldots \Psi_m(g) \]
as in theorem \ref{TheMainTheorem}.  Then the linear map $\phi:V
\rightarrow W$ is a morphism between $\Phi$ and $\Psi$ if and only
if it is a morphism between $\Phi_0$ and $\Psi_0$, between
$\Phi_1$ and $\Psi_1$, $\ldots$, and between $\Phi_m$ and
$\Psi_m$.
\end{theorem}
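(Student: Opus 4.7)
The plan is to prove both directions, the first being routine and the second carrying the content of the theorem. For the ``if'' direction, suppose $\phi \Phi_i(g) = \Psi_i(g) \phi$ for every $i$ and every $g \in U_n$. Since $\phi$ is a matrix of scalars, it slides past each factor of the product, giving
\[
\phi \Phi(g) = \phi \Phi_0(g) \Phi_1(g) \cdots \Phi_m(g) = \Psi_0(g) \phi \Phi_1(g) \cdots \Phi_m(g) = \cdots = \Psi(g) \phi.
\]

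For the converse, suppose $\phi \Phi(g) = \Psi(g) \phi$ as matrices whose entries are polynomials in the $x_{ij}$; this is equivalent to $\phi \chi^\Phi(M) = \chi^\Psi(M) \phi$ for every strictly upper triangular matrix $M$ with non-negative integer entries, where $\Phi(g) = \sum_M \chi^\Phi(M) x^M$ and analogously for $\Psi$. Specializing to $M = p^l \varepsilon_{rs}$ and invoking the coefficient identification used in the proof that Theorem \ref{TheMainTheoremSub} implies part (2) of Theorem \ref{TheMainTheorem} (where, because $d \leq p/2$, the only contribution to the monomial $x_{rs}^{p^l}$ in the product $\Phi_0(g) \Phi_1(g)^{[p]} \cdots \Phi_m(g)^{[p^m]}$ comes from the linear term in $\phi_l(\log g)$), we obtain $\chi^\Phi(p^l \varepsilon_{rs}) = \phi_l(\varepsilon_{rs})$ and similarly $\chi^\Psi(p^l \varepsilon_{rs}) = \psi_l(\varepsilon_{rs})$. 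Hence $\phi \, \phi_l(\varepsilon_{rs}) = \psi_l(\varepsilon_{rs}) \, \phi$ for all $l$ and all $1 \leq r < s \leq n$.

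By linearity of $\phi_l$ and $\psi_l$, this extends to $\phi \, \phi_l(X) = \psi_l(X) \, \phi$ for every $X \in \liefont{u}_n$, and in particular $\phi \, \phi_l(\log g) = \psi_l(\log g) \, \phi$ for every $g \in U_n$. Since $\phi_l(\log g)$ and $\psi_l(\log g)$ are nilpotent of order at most $d \leq p/2 < p$, the exponential series terminate, and the intertwining passes through each term, yielding $\phi \, e^{\phi_l(\log g)} = e^{\psi_l(\log g)} \, \phi$. This is an equality of matrices of polynomials in the $x_{ij}$; because the Frobenius twist $[p^l]$ is a ring endomorphism of $A_n$ and $\phi$ has scalar entries unaffected by $[p^l]$, the equality persists after applying $[p^l]$ entrywise, giving $\phi \, \Phi_l(g) = \Psi_l(g) \, \phi$ as desired. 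The only delicate point is the clean extraction of the $l$-th Frobenius layer by reading off the coefficient of $x_{rs}^{p^l}$; this is precisely what the $p \geq 2d$ hypothesis buys us, and it has already been carried out in the proof of the main theorem, so we may simply cite it.
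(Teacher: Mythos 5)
Your argument is correct. The paper itself does not actually prove this theorem in the text --- its ``proof'' is a one-line citation to theorem 10.1.1 of the author's dissertation --- so there is no internal argument to compare yours against; what you have produced is a self-contained proof assembled from machinery the paper does develop. Specifically, you use the identification of a comodule morphism with the matrix relation $\phi\,(a_{ij}) = (b_{ij})\,\phi$ over $A_n$, the expansion $\Phi(g)=\sum_M \chi^{\Phi}(M)x^M$, and the coefficient extraction already carried out in the proof that Theorem \ref{TheMainTheoremSub} implies part (2) of Theorem \ref{TheMainTheorem}: for $p\geq 2d$ the coefficient of $x_{rs}^{p^l}$ in the layered product is exactly $\phi_l(\varepsilon_{rs})$. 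That extraction does double duty for you --- it shows the layers $\phi_l$ are uniquely determined by $\Phi$ (so ``morphism between $\Phi_l$ and $\Psi_l$'' is unambiguous), and it converts the hypothesis $\phi\,\chi^{\Phi}(M)=\chi^{\Psi}(M)\,\phi$ into the layer-wise intertwining relations. Two small points are worth making explicit. First, in the easy direction the factors are really $\Phi_i(g)^{[p^i]}$, so you need $\phi\,\Phi_i(g)^{[p^i]}=\Psi_i(g)^{[p^i]}\,\phi$; this follows, as you note in the converse, because $[p^i]$ is a $k$-algebra endomorphism of $A_n$ that fixes the scalar entries of $\phi$. Second, passing from $\phi\,\phi_l(X)=\psi_l(X)\,\phi$ for $X\in\liefont{u}_n$ over $k$ to the same identity with $X=\log(g)$ requires extending $\phi_l$ and $\psi_l$ $A_n$-linearly to matrices with entries in $A_n$ (and the termination of the exponential series for such matrices is exactly what the paper's sufficiency argument already guarantees). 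With those remarks your proof stands on its own and, unlike the paper's, does not require the reader to consult the dissertation.
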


\begin{proof}
See theorem 10.1.1 of \cite{MyDissertation}.
\end{proof}

Thus we see that our analogy between the large characteristic
representation theory of $U_n$ and the representation theory of
$U_n^\infty$ in characteristic zero is far from superficial.

We can say even more. Denote by $\text{Rep}_k U_n$ the category of
finite dimensional representations of the algebraic group $U_n$
over the field $k$.  If $k_i$ is any sequence of fields of
strictly increasing positive characteristic, denote by $\prod_{H}
\text{Rep}_{k_i} U_n$ the `height-restricted ultraproduct' of the
categories $\text{Rep}_{k_i} U_n$ (see section 7.2 and chapter 14
of \cite{MyDissertation}). In the author's dissertation it is
proven that $\prod_{H} \text{Rep}_{k_i} U_n$ is itself the
category representations for some affine group scheme over some
field, and in fact

\begin{theorem}
\label{hprodTheorem} Let $k_i$ be a sequence of fields of strictly
increasing positive. Then $\prod_H \text{Rep}_{k_i} U_n$ is
tensorially equivalent, as neutral tannakian categories, to
$\text{Rep}_{\prod k_i} U_n^\infty$, where $U_n^\infty$ denotes a
countable direct product of copies of $U_n$, and $\prod k_i$ is
the ultraproduct of the fields $k_i$ (which is necessarily a field
of characteristic zero).

\end{theorem}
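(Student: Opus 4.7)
The strategy is to use the main theorem of this paper (Theorem \ref{TheMainTheorem}) together with the morphism classification to present both sides as categories of the same combinatorial datum, and then check that the height-restricted ultraproduct matches the characteristic zero theory of $U_n^\infty$ coordinate-wise. Concretely, I would fix the height bound $d$ in the ultraproduct, so that objects on the left are represented by compatible sequences $(V_i, \Phi_i)_i$ with $\dim_{k_i}(V_i) \le d$. For all but finitely many indices $i$ we have $p_i \ge \max(n, 2d)$, so Theorem \ref{TheMainTheorem} describes $\Phi_i$ uniquely as a commuting product
\[
\Phi_i(g) \;=\; e^{\phi_0^{(i)}(\log g)}\, \bigl(e^{\phi_1^{(i)}(\log g)}\bigr)^{[p_i]} \cdots \bigl(e^{\phi_{m_i}^{(i)}(\log g)}\bigr)^{[p_i^{m_i}]},
\]
where the $\phi_l^{(i)}:\liefont{u}_n \to \liefont{gl}(V_i)$ are nilpotent Lie algebra homomorphisms whose images commute layer-by-layer. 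Padding with zeros, the datum is a finitely-supported sequence of commuting nilpotent Lie algebra homomorphisms indexed by $\mathbb{N}$, and taking the ultraproduct yields a finitely-supported sequence $(\phi_l)_{l \in \mathbb{N}}$ of commuting nilpotent Lie algebra homomorphisms $\liefont{u}_n \to \liefont{gl}(V)$, where $V = \prod_i V_i$ is a vector space of dimension at most $d$ over $\prod k_i$.

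On the right hand side, a finite-dimensional representation of $U_n^\infty$ over the characteristic-zero field $\prod k_i$ factors through some finite product $U_n^N$ (only finitely many coordinates can act nontrivially on a finite-dimensional space), and the classical characteristic zero theory of unipotent groups presents such a representation as a commuting tuple of nilpotent Lie algebra homomorphisms $\phi_l:\liefont{u}_n \to \liefont{gl}(V)$ via $g \mapsto \prod_l e^{\phi_l(\log g)}$. Extending by zero, this is again a finitely-supported sequence indexed by $\mathbb{N}$, matching the data extracted from the ultraproduct. The correspondence between objects is therefore given by reading off the Lie algebra data layer by layer: the $l$-th layer of an ultraproduct object corresponds to the representation of the $l$-th factor of $U_n^\infty$.

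For morphisms, I would invoke the layer-by-layer morphism classification quoted in the previous section: a $k_i$-linear map is a morphism of $U_n$-representations over $k_i$ precisely when it intertwines every $\phi_l^{(i)}$ separately, and taking the ultraproduct of this condition yields the analogous layer-by-layer condition on the ultraproduct side, which is exactly the condition for being a morphism in $\text{Rep}_{\prod k_i} U_n^\infty$ (a morphism for a direct product of algebraic groups is a morphism for each factor). For the tensor structure, tensor product of representations transforms the datum $(\phi_l)_l$ by the standard action $\phi_l \otimes 1 + 1 \otimes \phi_l$ on each layer in both settings, and this operation commutes with taking ultraproducts; the canonical forgetful fiber functors agree by construction, so the resulting equivalence of categories is tannakian.

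The main obstacle will be making the height-restricted ultraproduct formalism precise enough to match the (potentially unbounded) growth in the number of nontrivial Frobenius layers $m_i$ with the countably many coordinates of $U_n^\infty$. Because $m_i$ can grow arbitrarily with $i$ (the larger $p_i$ is, the more layers a $d$-dimensional representation can in principle carry), the ultraproduct genuinely produces sequences of arbitrarily large but finite support, which is precisely what one sees on the characteristic zero side for $U_n^\infty$ rather than for any fixed $U_n^N$. Verifying that the height restriction is compatible with this layering, and that Łoś-style transfer correctly preserves ``finitely supported nonzero layers'' at the level of morphisms and tensor products, is the heart of the argument, which is carried out in detail in \cite{MyDissertation}.
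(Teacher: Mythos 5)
Your proposal is correct and takes essentially the same approach as the paper, whose own ``proof'' consists of citing theorem 14.0.6 of the dissertation (the $U_3$ case) and asserting that Theorem \ref{TheMainTheorem} lets that argument run for general $n$; your sketch reconstructs precisely the ingredients that argument needs (the layer decomposition of objects from the main theorem, the layer-by-layer morphism classification, tensor compatibility, and ultraproduct transfer). You also correctly flag the one delicate point --- reconciling unboundedly many nontrivial Frobenius layers with the countable indexing of $U_n^\infty$, which is exactly what the height restriction in $\prod_H$ is designed to control --- and, like the paper, defer its resolution to the dissertation.
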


\begin{proof}
This is proven for the case of $U_3$ (there called $H_1$) in
theorem 14.0.6 of \cite{MyDissertation}.  Given the main theorem
of the present paper, the reader should be able to convince
himself that the proof given there applies equally well to $U_n$
for any $n$.
\end{proof}

Finally, we have the following generic cohomology result.

\begin{theorem}
Let $G = U_n$, let $h \in \mathbb{N}$, and let $M$ and $N$ be
modules for $G$ over $\mathbb{Z}$. Suppose that the computation
$\text{dim} \text{Ext}^{1,h}_{G^h(k)}(M,N) = m$ (height-restricted
generic cohomology; see definition 15.2.1 of
\cite{MyDissertation}) is both finite and the same for any
characteristic zero field $k$. Then if $k_i$ is any sequence of
fields of increasing positive characteristic, the computation
\[ \text{dim} \text{Ext}^{1,h}_{G(k_i)}(M,N) \]
stabilizes to $m$ for large enough $i$.

\end{theorem}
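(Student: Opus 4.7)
The plan is to deduce the statement from theorem \ref{hprodTheorem} by a Łoś-style ultraproduct argument. The characteristic-zero hypothesis supplies the value $m$ for every realization of the $k_i$ as an ultraproduct, and the tannakian equivalence of theorem \ref{hprodTheorem} then forces the family $\dim \text{Ext}^{1,h}_{U_n(k_i)}(M,N)$ to agree with $m$ on every ultrafilter-large set, which is equivalent to eventual stabilization at $m$.

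First, fix a non-principal ultrafilter $\mathcal{U}$ on $\mathbb{N}$ and form $K = \prod_{\mathcal{U}} k_i$. Since the characteristics of the $k_i$ strictly increase, $K$ has characteristic zero, so the hypothesis gives
\[ \dim_K \text{Ext}^{1,h}_{U_n^h(K)}(M \otimes_{\mathbb{Z}} K,\, N \otimes_{\mathbb{Z}} K) = m. \]
Next I would invoke theorem \ref{hprodTheorem} to identify $\text{Rep}_K U_n^\infty$ with $\prod_H \text{Rep}_{k_i} U_n$ as neutral tannakian categories. Under this tensor equivalence, the modules $M \otimes_{\mathbb{Z}} K$ and $N \otimes_{\mathbb{Z}} K$ correspond to the height-restricted ultraproducts of $M \otimes k_i$ and $N \otimes k_i$; and since any tannakian equivalence is in particular an equivalence of abelian categories, it induces a $K$-linear isomorphism on all $\text{Ext}$ groups. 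Consequently
\[ \dim_K \text{Ext}^{1,h}_{\prod_H \text{Rep}_{k_i} U_n}(M,N) = m. \]

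The final and most delicate step is a ``Łoś for Ext'' lemma: Ext in the height-restricted ultraproduct of categories is itself the height-restricted ultraproduct of the Ext groups computed in each $\text{Rep}_{k_i} U_n$. This is the main obstacle, and it requires representing a Yoneda extension by bounded-complexity data so that a Łoś transfer is available. The key input is the present paper's theorem \ref{TheMainTheorem}: once $d = \dim M + \dim N$ and $h$ are fixed and $p$ is large, every $d$-dimensional representation of $U_n$ over $k_i$ is determined by at most $h$ Frobenius layers of commuting Lie-algebra data, uniformly across the $k_i$. Hence an extension of $N_{k_i}$ by $M_{k_i}$ of height at most $h$ lives in a family of bounded matrix size, and Łoś can be applied entry-by-entry to the matrix formulas. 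Granting this, an ultraproduct of finite-dimensional $k_i$-vector spaces has $K$-dimension $m$ iff the original dimensions equal $m$ on a $\mathcal{U}$-large set; since this conclusion must hold for \emph{every} non-principal $\mathcal{U}$ on $\mathbb{N}$, a standard ultrafilter argument gives $\dim_{k_i} \text{Ext}^{1,h}_{U_n(k_i)}(M,N) = m$ for all sufficiently large $i$.
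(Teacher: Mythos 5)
Your proposal is correct in outline and takes essentially the same route as the paper: the paper's proof of this theorem is simply a deferral to theorem 15.2.3 of \cite{MyDissertation} together with theorem \ref{hprodTheorem}, i.e.\ exactly the ultraproduct-transfer strategy you reconstruct (tannakian equivalence with $\text{Rep}_{\prod k_i}U_n^\infty$, a \L o\'s-type identification of the height-restricted $\text{Ext}$ groups, and the standard ``true in every non-principal ultrafilter implies eventually true'' argument). The ``\L o\'s for Ext'' step you flag as delicate is precisely the content supplied by the dissertation, so you have correctly isolated the one ingredient the present paper does not itself prove.
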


\begin{proof}
See the proof of theorem 15.2.3 of \cite{MyDissertation}, where
again it is proven for the case of $U_2$ and $U_3$, but with
theorem \ref{hprodTheorem} in hand applies equally well to every
$U_n$.

\end{proof}

Generic cohomology results for $\text{Ext}^n$, $n>1$, have been so
far elusive; we hope they will be forthcoming in the future (see
section 15.3 of \cite{MyDissertation}, and please contact the
author if you have any ideas about it ;)).

\section{Further Directions}

Our proof of part (2) of theorem \ref{TheMainTheorem} relied
essentially on the apparent nice and orderly nature of the
representing Hopf algebra $A_n$ of $U_n$; on the other hand, the
proof of part (1) given in section \ref{sufficiencySection}, so
far as we can tell, did not, and seems to apply equally well to
almost any unipotent algebraic group.  The author's current
knowledge of arbitrary unipotent algebraic groups is at the moment
lacking, so he hesitates to make any bold claim concerning this;
but it is certainly worth pursuing.

Assuming, then, that the arguments used to prove part (1) theorem
\ref{TheMainTheorem} apply equally well to an arbitrary algebraic
group, should not there exist an argument to prove (some version
of) part (2) as well?  Theorem \ref{hprodTheorem} makes it clear
that, while on the surface theorem \ref{TheMainTheorem} is
explicitly about the internal structure of certain $U_n$-modules,
at heart it is really a \emph{categorical} result; that is, it is
as much a statement about the ambient category $\text{Rep}_{k}
U_n$ as it is about the internal structure of any particular
$U_n$-module. This leads us to believe that a purely
categorical/model theoretic proof of theorem \ref{hprodTheorem}
should exist, one which would hopefully apply equally well to an
arbitrary unipotent algebraic group, and would give us all we
really need as far as questions of large scale cohomology are
concerned.  We hope that such insights will be forthcoming in a
later paper.

\section{Acknowledgements}

The author would like to sincerely thank his thesis advisor Paul R
Hewitt, under whose direction and advice theorem
\ref{heisenbergGroupTheorem} originally appeared, which was the
primary motivation for this paper.  Thanks also to Dave Hemmer for
his thoughtful advice and encouragement, and to Chris Bendel for
his kind reading of a previous version of this paper and his
thoughtful suggestions.

\bibliography{common}
\bibliographystyle{plain}

\end{document}